\newtheorem{theorem}{Theorem}[section]
\newtheorem{corollary}[theorem]{Corollary}
\newtheorem{lemma}[theorem]{Lemma}
\newtheorem{proposition}[theorem]{Proposition}
\newtheorem{claim}[theorem]{Claim}
\theoremstyle{definition}
\newtheorem{definition}[theorem]{Definition}
\theoremstyle{remark}
\newtheorem{remark}[theorem]{Remark}
\numberwithin{equation}{section}
\newcommand{\R}{\mathbb R}
\newcommand{\N}{\mathbb N}
\newcommand{\NN}{\mathcal N}
\newcommand{\C}{\mathbb C}
\newcommand{\CC}{\mathcal C}
\newcommand{\VV}{\mathcal V}
\newcommand{\id}{\text{id}}
\newcommand{\TT}{\mathcal T}
\newcommand{\Aa}{\mathcal A}
\newcommand{\Ss}{\mathcal S}
\newcommand{\LL}{\mathcal L}
\newcommand{\JJ}{\mathcal J}
\newcommand{\ZZ}{\mathcal Z}
\newcommand{\QQ}{\mathcal Q}
\newcommand{\D}{\mathbb D}
\newcommand{\Sp}{\mathbb S}
\newcommand{\B}{\mathbb B}
\newcommand{\Z}{\mathbb Z}
\newcommand{\BB}{\mathcal B}
\newcommand{\HH}{\mathcal H}
\newcommand{\EE}{\mathcal E}
\newcommand{\DD}{\mathcal D}
\newcommand{\OO}{\mathcal O}
\newcommand{\UU}{\mathcal U}
\newenvironment{proofSphereCor}%
{{\sc Proof of Theorem \ref{sphere_cor}.}}%
{{\qed} \\}
\newenvironment{proofMain1}%
{{\sc Proof of Theorem \ref{Main1}.}}%
{{\qed} \\}
\newenvironment{proofMain3}%
{{\sc Proof of Theorem~\ref{Main3}.}}%
{{\qed} \\}
\date{\today}
\title[Uniformization of CAT($k
$) spheres]
{Harmonic branched coverings and \\ uniformization of CAT($k$) spheres}
\thanks{
CB supported in part by NSF DMS-1609198 and NSF DMS CAREER-1750254, CM supported in part by NSF DMS-2005406.}
\author[Breiner]{Christine Breiner}
\address{Department of Mathematics \\
                 Fordham University \\
                 Bronx, NY  10458}
\email{cbreiner@fordham.edu}
\author[Mese]{Chikako Mese}
\address{Johns Hopkins University\\
Department of Mathematics\\
3400 N. Charles Street\\
Baltimore, MD  21218}
\email{cmese@math.jhu.edu}
\begin{document}

\maketitle

\begin{abstract}
Let $S$ be a surface with a metric $d$ satisfying an upper curvature bound in the sense of Alexandrov (i.e.~via triangle comparison).  
We show that an almost conformal harmonic map from a surface into $(S,d)$ is a branched covering.  As a consequence, if $(S,d)$ is homeomorphically equivalent to the 2-sphere $\Sp^2$, then it is conformally equivalent to $\Sp^2$.    
MSC 58E20, 30F10
\end{abstract}

\section{Introduction} \label{intro}

The uniformization theorem for Riemann surfaces was one of the landmark achievements in the mathematics of the 19th and early 20th century.   Due to Koebe and Poincar\'e, and building on  prior works of Gauss, Abel, Jacobi, Riemann, Weierstrass, Clebsch, Fuchs, Schwarz, Klein, Fricke, Hilbert and Osgood among others, the theorem asserts that \emph{every simply connected Riemann surface is conformally equivalent to one of three Riemann surfaces: the open unit disk, the complex plane, or the Riemann sphere}.  
The result and its various proofs have had a major impact on several fields of mathematics, including complex analysis, geometry, combinatorial group theory and topology.  In geometry for instance, the uniformization theorem implies that every smooth Riemannian metric $g$ defined on a closed surface  $S$ is   conformally equivalent to a Riemannian metric $g_0$  of constant Gaussian curvature; i.e.~there exists a diffeomorphism $u:S \rightarrow S$ and a positive function $\lambda$  such that the pullback $u^*g$ of $g$ via $u$ satisfies $u^*g=\lambda g_0$.   

In the past few decades, there has been a growing  interest in non-smooth spaces and in their corresponding analysis.  From this perspective, it is natural to examine the uniformization of non-smooth geometry and, in particular,   ask when a geometric space is conformally equivalent to a Riemannian metric of constant Gaussian curvature.   An example of a result of this type can be deduced from a classical result of Ahlfors-Bers \cite{ahlfors-bers} and Morrey~\cite{morrey}.  Indeed, the \emph{Bounded Measurable Riemann Mapping Theorem}, which generalizes the uniformization theorem, implies the following:    
\emph{If $g$  is a bounded measurable Riemannian metric on the 2-sphere ${\mathbb S}^2$, then
 there  exists a quasiconformal map  $u:{\mathbb S}^2 \rightarrow ({\mathbb S}^2, g)$ from the standard 2-sphere that is conformal almost everywhere and unique up to composition with a M\"{o}bius transformation.}  Here, by the standard 2-sphere, we mean the topological sphere   $\Sp^2=\{(x,y,z) \in \R^3:   x^2+y^2+z^2=1\}$ endowed with the  metric $g_{\Sp^2}$ inherited from the embedding  $\Sp^2 \hookrightarrow \R^3$.  To the extent of our knowledge,  Y.~Reshetnyak \cite{reshetnyak0} was the first to address the question of conformal parameterization of metric spaces.  The method employed in \cite{reshetnyak0} is to take an approximation of a  singular surface by piecewise linear surfaces and the local approach there differs from the global approach taken up here using harmonic maps.  The local conformal parameterization problem for metric spaces was further studied by the second author in \cite{mese-thesis} and \cite{meseMM} via a harmonic maps approach.     
A current active area of study is the  quasiconformal equivalency of the sphere, i.e.~ the question of when a metric space which is topologically equivalent to a sphere is quasiconformally equivalent to the Riemann sphere (e.g.~\cite{bonk-kleiner}, \cite{lytchak-wenger}, \cite{rajala}).  
Although not explicitly stated in his work with S.~Wenger, A.~Lytchak \cite{lytchak-private} has explained to us how $1$-quasiconformality of the quasiconformal map can be shown for spaces ``which do not contain infinitesimal non-Euclidean norms".  We also add that the harmonic maps approach with an applied mathematics bent has been studied by several authors.  For more detail on the work in this area, we refer to the survey paper of X.~Gu, F.~Luo and S.~T.~Yau \cite{guluoyau} and the references therein. 

In this paper,  we take a different approach to the uniformization problem than the ones taken in the aforementioned work. 
Our focus is on the branched covering and uniformization of surfaces endowed with a distance function  satisfying an upper curvature bound given in terms of the   CAT($\kappa$) inequality. This means that sufficiently small geodesic triangles are ``skinnier" than a corresponding comparison triangle in a Riemannian surface of constant Gaussian curvature $\kappa$.  In particular, our construction of a conformal map relies on the following:  (i) The generalization of the Sacks-Uhlenbeck bubbling by the authors and their collaborators \cite{paper2}, namely the existence of a harmonic map from a compact surface to a CAT($\kappa$) space, and   (ii) A careful local analysis of the harmonic map when the domain and the target spaces are both (topologically) the 2-sphere.  The analysis in (ii) allows us to conclude that the harmonic map from (i) is in fact a branched cover. We construct a 1-quasiconformal map by taking a quotient of this branched cover.

In order to elaborate on the existence statement of item (i), we recall the following deep theorem of Sacks and Uhlenbeck \cite{sacks-uhlenbeck}:  \emph{Given a finite energy map from a Riemann surface into a compact Riemannian manifold,  either there exists a  harmonic map homotopic to the given map  or there exists a branched minimal immersion of the 2-sphere.}
The existence theory of harmonic maps when the target space has non-positive curvature has been widely addressed.  However, the existence without the upper curvature bound of 0 is much more complicated, and this result of Sacks-Uhlenbeck was a breakthrough in the field.  Indeed, their study of the ``bubbling phenomena,"  that either a minimizing sequence of maps converges to a harmonic map or forms a ``bubble" (i.e.~a harmonic map from a sphere) has been a widely influential idea in geometric analysis.    The authors of the current article and their collaborators generalized  the Sacks-Uhlenbeck theorem in the metric space setting and proved the following  \cite{paper2}:  \\
 \\
{\bf Theorem.}  \emph{If $\Sigma$ is a compact Riemann surface, $(X,d)$ is a compact locally CAT($\kappa$) space, and $\varphi:\Sigma \rightarrow X$ is a continuous finite energy map, then either there exists a harmonic map $u:\Sigma \rightarrow (X,d)$ homotopic to $\varphi$ or an almost conformal harmonic map $v:\mathbb S^2 \rightarrow (X,d)$.}\\

On the one hand, by applying the above theorem with $\Sigma=\Sp^2$, either of the alternatives  yields a harmonic map from the standard 2-sphere.   On the other hand, proving uniformization requires the existence of a harmonic \emph{homeomorphism} and it is unclear that the second alternative in the theorem yields even a degree 1 map (even when the initial map $\varphi$ is of degree 1).  Thus,   further  analysis  of this harmonic map  is needed. Note that the first author and S. Lakzian \cite{breinerlakzian} completed the full bubble tree picture for harmonic maps into compact locally CAT($\kappa$) spaces, but this compactness result also fails to guarantee the existence of a {homeomorphism}.

The second ingredient (i.e.~item (ii)), the analysis of the local behavior of harmonic maps through its   tangent maps, is the main technical accomplishment of this paper. The use of the tangent map as a tool in the analysis of harmonic maps in the singular setting was initiated in the seminal work of Gromov and Schoen \cite{gromov-schoen} and also developed for example in   \cite{daskal-meseCAG}, \cite{daskal-meseFPT}, \cite{daskal-meseMem}, \cite{daskal-meseCre}, \cite{daskal-meseHR}, \cite{daskal-meseGAFA}, \cite{BFHMSZ}.  We  advance this idea further in the setting of CAT($\kappa$) surfaces.  Using tangent maps, we define a notion of a non-degenerate harmonic map in this setting of singular surfaces, generalizing harmonic diffeomorphisms between smooth surfaces.  
We prove that a non-degenerate harmonic map is well-behaved locally  and thus defines a branched covering.  
\begin{theorem}\label{Main1} 
A proper, non-degenerate  harmonic map from a Riemann surface to an oriented locally CAT($\kappa$) surface is a branched cover; i.e.~the map  is a covering map away from a  discrete subset of the Riemann surface.
If the map is degree 1, then the map is a homeomorphism.
\end{theorem}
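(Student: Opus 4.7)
The plan is to reduce the theorem to a local statement via tangent map analysis and then globalize using properness. Let $u:\Sigma \to (X,d)$ be the given proper non-degenerate harmonic map. First I would fix a point $p \in \Sigma$, pass to a conformal disk coordinate around $p$, and study the blow-up sequence $u_\lambda(z):= $ (appropriately rescaled $u$) at $p$. By the general tangent map machinery of Gromov--Schoen as adapted in the references cited in the excerpt, a (sub)sequential limit $u_*:\C \to T_{u(p)}X$ exists, where $T_{u(p)}X$ is the tangent cone at $u(p)$. Since the target is a locally CAT($\kappa$) surface, $T_{u(p)}X$ is a CAT($0$) cone over a metric circle of some total length, so it is either Euclidean $\R^2$ or a (possibly singular) cone point. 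The tangent map $u_*$ is itself an almost conformal harmonic map, and the non-degeneracy hypothesis is precisely what forbids $u_*$ from being constant or from collapsing to a geodesic.

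The heart of the argument, and the step I expect to be the main obstacle, is to classify these tangent maps. The goal is to show that, after choosing suitable coordinates on $\C$ and on the cone $T_{u(p)}X$, the tangent map is the model branched covering $z \mapsto z^{n(p)}$ of some degree $n(p) \geq 1$, where the exponent is dictated by matching the conformal factor on the domain with the cone angle on the target. The inputs I would use are: (a) conformality of the tangent map together with the harmonic equation, forcing $u_*$ to be a conformal map onto the cone; (b) the CAT($\kappa$) triangle comparison, which gives monotonicity of the Almgren frequency / order function and hence forces the degree of the tangent map to be an integer (the cone angle at $u(p)$ must be $2\pi n(p)/m$ for some integers, but once we know $u_*$ is branched covering of the whole cone the degree is well defined); and (c) the surface topology to ensure that $u_*$ is surjective onto the cone. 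Once the tangent map is identified with $z \mapsto z^{n(p)}$, a standard harmonic replacement / comparison argument upgrades this local model to an actual local model for $u$ near $p$: there exist neighborhoods $U$ of $p$ and $V$ of $u(p)$ and homeomorphisms making $u|_U:U \to V$ topologically conjugate to $z \mapsto z^{n(p)}$.

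From this local picture the rest is topology. The branch locus $B := \{p : n(p) \geq 2\}$ is discrete because the topological local degree is upper semicontinuous and equals $1$ on a neighborhood of any regular point (where the local model is a homeomorphism). Therefore $u$ restricts to a local homeomorphism $\Sigma \setminus B \to X \setminus u(B)$. Since $u$ is proper, this restriction is a proper local homeomorphism between connected, locally compact Hausdorff spaces, hence a (finite-sheeted) covering map. Combined with the local branched model at points of $B$, this is exactly the definition of a branched covering.

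Finally, for the degree one statement, suppose $\deg u = 1$. Because $u$ is proper and the target is an oriented surface, the mapping degree can be computed locally at any regular value $q$ as $\sum_{p \in u^{-1}(q)} 1 = \#u^{-1}(q)$, so regular fibers are singletons. Hence the covering $\Sigma \setminus B \to X \setminus u(B)$ has one sheet, so it is a homeomorphism. Then, for any $p \in B$, the local model $z \mapsto z^{n(p)}$ can realize a one-to-one map only when $n(p)=1$, so in fact $B = \emptyset$ and $u$ is a homeomorphism on all of $\Sigma$. The main technical obstacle, as indicated above, is the rigidity of tangent maps in step two: one must leverage the CAT($\kappa$) bound, the almost-conformality provided by non-degeneracy, and the two-dimensionality of the target simultaneously in order to force the tangent map to coincide with the standard branched-cover model $z \mapsto z^n$ rather than some exotic conformal self-map of a singular cone.
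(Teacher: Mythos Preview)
Your outline follows the same broad strategy as the paper (tangent-map classification, local topological model, properness to globalize), but two of the steps you flag as routine are in fact where the real work lies, and as written they contain gaps.

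First, the claim that ``the tangent map $u_*$ is itself an almost conformal harmonic map'' is false for a general non-degenerate harmonic map. Non-degeneracy (Definition~\ref{nondegen_def}) only rules out stretch $k_u(p)=1$; for $k\in(0,1)$ the Alexandrov tangent map is, by Kuwert's classification, $c\bigl(\tfrac{1}{2}(k^{-1/2}z^\alpha + k^{1/2}\bar z^\alpha)\bigr)^{1/\beta}$, which is not conformal and not literally $z\mapsto z^n$. What survives is only that its \emph{winding number} about the cone point is $\alpha/\beta\in\N$, and this is what the paper actually uses.

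Second, and more seriously, the passage ``a standard harmonic replacement / comparison argument upgrades this local model to an actual local model for $u$ near $p$'' is neither standard nor supplied, and the paper does not proceed this way. Instead, the paper first proves from the tangent-map structure that $u$ is discrete (Lemma~\ref{discrete}) and open (Proposition~\ref{omt}), then invokes V\"ais\"al\"a's theorem to get that the branch set $\BB_u$ has topological dimension~$0$, and only then uses local degree theory (\`a la Dold) to identify $|w_\#(p)|$ with the tangent-map winding number $\alpha/\beta$ and to show that $u$ is a local homeomorphism precisely where $|w_\#(p)|=1$. The discreteness of $\BB_u$ comes from the chain of inclusions $\BB_u\subset\{p:|w_\#(p)|\neq 1\}\subset\{p:\mathrm{ord}^u(p)\geq 2\}$, the last set being discrete by a separate order-function argument (Lemma~\ref{A}). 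Your proposed route---``local degree is upper semicontinuous, hence $B$ is discrete''---only shows $B$ is closed, not discrete; and without $\dim\BB_u=0$ first, you also cannot yet assert that the sign of the local degree is constant, which you implicitly use in the degree-one step when writing $\deg u=\sum_{p\in u^{-1}(q)}1$.
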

 
Specializing to the case when the domain is the standard sphere, we obtain the following: 

 \begin{theorem}\label{sphere_cor}
A  non-trivial harmonic map from the standard sphere $\Sp^2$ to an oriented locally CAT($\kappa$) sphere is  an almost conformal branched cover.  If the map is degree 1, then it is a conformal (i.e.~1-quasiconformal) homeomorphism with conformal inverse.
 \end{theorem}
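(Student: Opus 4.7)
The strategy is to deduce Theorem~\ref{sphere_cor} from Theorem~\ref{Main1} by verifying its hypotheses for an arbitrary non-trivial harmonic map $u:\Sp^2 \to (S,d)$. Properness is immediate, since $\Sp^2$ is compact and the target is Hausdorff, so I need only establish almost conformality and non-degeneracy.

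For almost conformality, the natural tool is the Hopf differential $\Phi(u)\, dz^2$. Even for CAT($\kappa$) targets, the directional-energy formulation developed by Korevaar--Schoen and pursued in the singular-target literature cited in the introduction yields a holomorphic quadratic differential associated to any harmonic map from a Riemann surface. Because the Riemann sphere admits no non-zero holomorphic quadratic differentials, $\Phi(u) \equiv 0$, which by definition is almost conformality: the two directional energy densities of $u$ agree almost everywhere.

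For non-degeneracy, I would argue that the combination of vanishing Hopf differential and non-triviality forces the tangent map of $u$ to be non-constant at every point. Indeed, if the tangent map at some $p \in \Sp^2$ were a constant map of tangent cones, then almost conformality would force both directional energy densities to vanish in a neighborhood of $p$; the unique continuation / local structure theory for harmonic maps into CAT($\kappa$) surfaces, which is one of the technical outputs of the tangent-map analysis underlying Theorem~\ref{Main1}, would then propagate this degeneracy globally, contradicting non-triviality of $u$. With properness, almost conformality, and non-degeneracy in hand, Theorem~\ref{Main1} directly yields that $u$ is a branched cover, which together with almost conformality gives the first claim of the theorem.

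For the degree-one statement, the corresponding clause of Theorem~\ref{Main1} delivers that $u$ is a homeomorphism. A homeomorphism whose Hopf differential vanishes identically is 1-quasiconformal in the measurable sense; and since 1-quasiconformality of a homeomorphism is preserved under inversion, $u^{-1}$ is 1-quasiconformal as well, giving the conformal inverse. The step I expect to require the most care is the passage from non-triviality plus $\Phi(u)\equiv 0$ to the paper's precise definition of \emph{non-degenerate}: this is the point at which one must actually invoke the refined local structure of tangent maps developed elsewhere in the paper, rather than making a one-line soft deduction. Once that step is secured, the remainder of the argument is a clean application of Theorem~\ref{Main1} together with the standard symmetry of 1-quasiconformality under inversion.
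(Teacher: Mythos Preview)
Your overall strategy matches the paper's exactly: almost conformality via the vanishing Hopf differential on $\Sp^2$ (Lemma~\ref{holomorphic0}), then non-degeneracy, then Theorem~\ref{Main1}, then 1-quasiconformality. However, two of your steps have genuine gaps relative to what the paper actually does.

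\textbf{Non-degeneracy.} Your proposed mechanism---rule out a \emph{constant} tangent map by a unique-continuation argument---misidentifies the failure mode. Tangent maps are normalized (cf.~(\ref{bmnorm})), so they are never constant; the obstruction to non-degeneracy in the sense of Definition~\ref{nondegen_def} is that the tangent map could collapse sectors onto geodesic rays (the $k=1$ case of Kuwert's classification below~(\ref{tangent_map_equation})). The paper's argument (Lemma~\ref{nd_lemma}) is different and does not use unique continuation: since the directional energy densities of the blow-ups $u_{\sigma_j}$ converge weakly to those of $v_*$ (Lemma~\ref{v*}), the Hopf differential of $v_*$ vanishes, so $v_*$ is itself almost conformal, and Kuwert's classification forces $I_{u(p)}\circ v_*(z)=c\,z^{\alpha/\beta}$, i.e.\ stretch $k=0$. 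This form is manifestly non-degenerate. You correctly flag this step as the delicate one, but the specific mechanism you sketch is not the right one.

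\textbf{1-quasiconformality.} The paper's notion (Definition~\ref{qc_def}) is the \emph{pointwise metric} one, $H_u(p)=\limsup_{r\to 0} L_u(p,r)/l_u(p,r)$, not a ``measurable'' version, so ``$\Phi\equiv 0$ implies 1-quasiconformal in the measurable sense'' does not directly give what is claimed. Nor does the paper appeal to a general ``1-quasiconformality is preserved under inversion'' principle for metric-space targets. Instead, the explicit form $v_*=c\,z^{\alpha/\beta}$ (hence $k_u(p)=0$ for every $p$) is fed into Lemma~\ref{Hklemma} and Proposition~\ref{Main2}, which compute $H_u(p)=H_{v_*}(0)=H_{u^{-1}}(u(p))$ directly from the tangent map and give $H(0)=1$ for both $u$ and $u^{-1}$ at every point.
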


As a consequence, we obtain a uniformization theorem. That is, if the CAT($\kappa$) space is  homeomorphic to a sphere then it is conformally equivalent to the standard sphere $\Sp^2$.  
 \begin{theorem}  \label{Main3}
   If $(S,d)$ is a locally CAT($\kappa$) sphere, then there exists a conformal (i.e.~1-quasiconformal) harmonic homeomorphism $h:\Sp^2 \rightarrow (S,d)$ from the standard sphere, with $h^{-1}$ also conformal, which is  unique up to composition with a M\"obius transformation.  
   Moreover, $h$ is almost conformal and the energy of the map is twice the Hausdorff 2-dimensional measure of $(S,d)$.
   \end{theorem}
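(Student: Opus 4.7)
The strategy is to produce an almost conformal harmonic branched cover onto $(S,d)$ using the generalized Sacks-Uhlenbeck theorem stated in the introduction, to refine it to a degree-one map via a quotient construction, and then to verify uniqueness together with the energy-area identity. Since $(S,d)$ is homeomorphic to $\Sp^2$, I first fix a Lipschitz homeomorphism $\varphi:\Sp^2\to(S,d)$, which is a continuous finite-energy map. Applying the generalized Sacks-Uhlenbeck theorem with $\Sigma=\Sp^2$ and this $\varphi$, either (A) a harmonic map $u:\Sp^2\to(S,d)$ homotopic to $\varphi$ (hence of degree one) exists, or (B) an almost conformal harmonic map $v:\Sp^2\to(S,d)$ exists. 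Both alternatives produce a non-trivial harmonic map from the standard sphere, so Theorem~\ref{sphere_cor} applies and the resulting map is an almost conformal branched cover of some degree $n\geq 1$. In case (A) the degree is one and Theorem~\ref{sphere_cor} already delivers the required conformal harmonic homeomorphism with conformal inverse.

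Handling case (B) with $n\geq 2$ is the heart of the argument. I define $p\sim q$ on $\Sp^2$ by $v(p)=v(q)$; since $v$ is a branched cover by Theorem~\ref{Main1}, the quotient map $\pi:\Sp^2\to\Sp^2/{\sim}$ is a topological branched cover and the induced $\bar v:\Sp^2/{\sim}\to(S,d)$ is a homeomorphism. Away from the finite branch locus $v$ is a local homeomorphism, and almost conformality means each local inverse branch of $v$ pulls $d$ back to a conformal multiple of the standard metric on $\Sp^2$. Consequently the different local inverses determine the same conformal structure $\CC$ on $(S,d)$ off the branch values, and this structure extends across the finitely many branch values by classical removable singularity arguments. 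Since $(S,\CC)$ is a smooth Riemann surface homeomorphic to $\Sp^2$, the classical uniformization theorem produces a biholomorphism $\psi:\Sp^2\to(S,\CC)$, and I set $h:=\psi$. Viewed as a map into $(S,d)$, the map $h$ is $1$-quasiconformal with $1$-quasiconformal inverse because $\CC$ is by construction compatible with $d$. To see $h$ is harmonic, I use that the energy of any $1$-quasiconformal map into a CAT($\kappa$) surface equals twice the Hausdorff $2$-measure of its image and that this is the minimum possible energy in its homotopy class, so $h$ is an energy minimizer.

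For uniqueness, if $h_1,h_2:\Sp^2\to(S,d)$ are two such conformal harmonic homeomorphisms with conformal inverses, then $h_2^{-1}\circ h_1:\Sp^2\to\Sp^2$ is a conformal self-homeomorphism of the standard sphere, hence a M\"obius transformation. For the energy identity, at almost every point $h$ has an approximate derivative acting as a conformal linear map with factor $\lambda$; the energy density equals $2\lambda^2$ while the Jacobian equals $\lambda^2$, so integration yields $E(h)=2\,\HH^2((S,d))$.

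The main obstacle is the rigorous execution of the quotient construction in case (B): one must verify that the conformal structures induced by the different local inverse branches of $v$ glue to a single well-defined conformal structure $\CC$ on $S$, that $\CC$ is compatible with the singular metric $d$ in the sense that the identity $(S,\CC)\to(S,d)$ is $1$-quasiconformal, and that $\CC$ extends smoothly across the branch values. The essential input here is the \emph{almost conformality} of $v$, rather than just its harmonicity, since it identifies the pulled-back conformal structure on the domain with the ``conformal structure underlying $d$,'' thus bridging the singular geometry of $(S,d)$ to the classical smooth uniformization.
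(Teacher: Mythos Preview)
Your overall strategy—produce a harmonic map via Sacks--Uhlenbeck, upgrade it to a branched cover by Theorem~\ref{sphere_cor}, and then pass to a quotient to obtain a degree-one conformal homeomorphism—is exactly the route the paper takes. However, there is a genuine gap at the very first step, and two places where your execution diverges from the paper in ways that need more care.

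\textbf{The gap.} You ``fix a Lipschitz homeomorphism $\varphi:\Sp^2\to(S,d)$.'' No such map is provided by the hypotheses, and its existence is essentially what you are trying to prove (a Lipschitz homeomorphism would already be a quasiconformal parameterization). The paper spends three steps building, via a geodesic triangulation of $(S,d)$ and Reshetnyak's majorization, a Lipschitz map $f:\Sp^2\to(S,d)$ which is explicitly \emph{not} claimed to be a homeomorphism. This is enough to feed into Corollary~\ref{cor:bubbling}, but it means your Case~(A) shortcut (degree one for free) is unavailable: even if alternative~(A) of the Sacks--Uhlenbeck theorem occurs, the resulting harmonic map is only homotopic to $f$, and $f$ may have degree $0$. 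You must go through the quotient construction regardless.

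\textbf{Two further points.} First, in your quotient step you assert that the local inverse branches of $v$ induce the same conformal structure on $S$. This is correct, but it is not a formal consequence of ``almost conformality''; the paper proves it (equation~(\ref{ishol})) by invoking the argument of Proposition~\ref{unique_mobius}, i.e.\ the tangent-map analysis together with Gehring's theorem that a $1$-quasiconformal self-map of a planar domain is conformal. You flag this as ``the main obstacle'' but do not indicate how to resolve it. Second, your proof that $h$ is harmonic (``energy equals twice the area, which is minimal'') is not the paper's argument and, as stated, is circular: you need almost conformality to get $E=2\HH^2$, but you are trying to deduce harmonicity in order to make sense of the pullback metric. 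The paper instead observes that $\mathrm{id}=u\circ\pi^{-1}$ is locally the composition of the harmonic map $u$ with a biholomorphism, hence harmonic and almost conformal on $\QQ\setminus\pi(\BB)$, and then invokes the removable-singularity theorem of \cite{paper2} to extend across the finite set $\pi(\BB)$. Your uniqueness argument has the same issue: $h_2^{-1}\circ h_1$ is a $1$-quasiconformal self-homeomorphism of $\Sp^2$, but concluding it is M\"obius again requires Gehring's theorem (this is Proposition~\ref{unique_mobius}).
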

   The notion of \emph{conformality} (a.k.a.~1-quasiconformality)  is in the metric space sense and captures  the property that \emph{infinitesimal circles are transformed to infinitesimal circles} (cf.~Definition~\ref{qc_def}).   
Our theorem asserts more than conformal equivalence of the two spaces.  Indeed, Theorem~\ref{Main3} asserts that the conformal equivalence is achieved by an \emph{almost conformal harmonic} map.  The notion of an almost conformal map 
 captures the geometric property  that  the pullback metric of $h$ is  conformally equivalent to a Riemannian metric of constant curvature (cf.~Definition~\ref{def:conformal}).\\
\\ 
{\sc Main ideas and Outline of the paper:}
\vskip .25cm

The paper is roughly divided into two parts:
\begin{itemize}
\item  {\sc Part I}:  Local analysis and branched covering results~(Sections 2-4)
\item {\sc Part II}:  Existence and uniqueness results~(Section 5)
\end{itemize}
\vskip .15cm

{\sc Part I}.   The proof of 
Theorem~\ref{Main1} relies on the analysis of the local behavior of harmonic maps.   
The main tool for this  is the Alexandrov  tangent maps associated to a harmonic map whose usefulness is due to the fact that they map into  tangent cones of the target CAT($\kappa$) space.  (This is in analogy to the differentials of a smooth map between  Riemannian manifolds which map into tangent spaces.)  In comparison,  a (non-Alexandrov) tangent map of  a  harmonic map into an arbitrary CAT($\kappa$) space maps into an abstract metric space that is not necessarily a tangent cone, as indicated in  \cite[Section 3]{korevaar-schoen2}.     Generally speaking,  an Alexandrov tangent map is not necessarily harmonic. Consider the following example.\\
\\
{\it Example.}  First, let $({\bf H}, g_{\bf H})$ be the smooth Riemannian surface given by 
 \[
 {\bf H}=\{(x,y) \in \R^2: y >0\}, \ \ 
g_{\bf H}=dy^2+ y^6 dx^2.
\]
(It is instructive to think of this surface as the covering space of a cusp minus the cusp point, or more precisely, as a covering space of a  surface of revolution in $\R^3$ of the planar curve $y=x^{\frac{1}{3}}$ minus the origin.)  Next, let  $\overline{\bf H}$ be the  metric completion of ${\bf H}$ constructed by adding the boundary line $\{y=0\}$ and identifying this line as a single point $P_0$.  This is a CAT(0) space (and an important object in the study of Teichm\"uller space, cf.~\cite{daskal-meseHR} and references therein).  A vertical line $\{x=c\}$ for a fixed  constant $c \in \R$ is a geodesic emanating from $P_0$ and the angle between  any two such geodesics at $P_0$ is equal to 0.    Thus,  the space of directions at $P_0$ (i.e.~the equivalence class of geodesics where two geodesics are equivalent if and only if the angle between them is 0) has only one element.  This implies that  the tangent cone $T_{P_0}\overline{\bf H}$ (i.e.~a metric cone over the space of directions) is isometric to  the interval $[0,\infty)$, and  an Alexandrov  tangent map of harmonic map $u$  into $\overline{\bf H}$ at a point in $u^{-1}(P_0)$ can be viewed as a  \emph{function} mapping into the interval $[0,\infty)$.  This Alexandrov tangent  map  cannot be a harmonic  because otherwise it would violate the minimum principle for harmonic functions by having 0 in its range.  \\

The situation for a harmonic map into a  CAT($\kappa$) surface is different than the above example since any tangent cone is a metric cone over a closed curve and does not allow for pathological tangent cones as in  the example above (cf.~Proposition~\ref{coneovercurve}). Indeed, we show that the Alexandrov tangent maps of a harmonic map into a  CAT($\kappa$) surface are harmonic (cf.~Theorem~\ref{tangent_maps_are_cones}). 
Thus, we can characterize    Alexandrov tangent maps using the classification of homogeneous harmonic maps into a conical surface (cf.~Kuwert \cite{kuwert}).  From this, we deduce that  non-degenerate harmonic maps are discrete and open.  Thus, it follows by V\"ais\"al\"a's classical result that  proper, non-degenerate harmonic maps (which include  proper, almost conformal harmonic maps) are local homeomorphisms away from a set of topological dimensional zero, the branch set.  We further improve this result and prove that the branch set is discrete by an application of the  order function and using the structure of Alexandrov tangent maps.  Consequently, we conclude that proper, non-degenerate harmonic maps  are branched coverings.  The following is an outline of  {\sc Part I}:
  
  \begin{itemize}
  \item[\S2] {\sc Preliminaries}.
  We recall the definitions of CAT$(\kappa)$ spaces and tangent cones.  Furthermore, we recall  the Korevaar-Schoen Sobolev spaces into metric spaces, including the notions of harmonic maps, pullback metrics and almost conformal maps, and explore the relationship of the Korevaar-Schoen energy density functions,  metric differential and the Jacobian. 
  \item[\S3] {\sc Tangent maps}.  We recall the notion of tangent maps  from \cite{gromov-schoen} and Alexandrov tangent maps from \cite{daskal-meseFPT}.  The main goal is to prove Theorem~\ref{tangent_maps_are_cones}, which asserts  that an Alexandrov tangent map associated to a harmonic map into a compact CAT($\kappa$) space with the geodesic extendability condition is a  homogeneous harmonic map. Note that a  CAT($\kappa$) surface satisfies the geodesic extendability condition.
  \item[\S4]  {\sc Non-degenerate harmonic maps.}
 This section contains the technical results needed to show that proper, non-degenerate harmonic maps are in fact branched covers. It also contains the proofs of Theorem~\ref{Main1} (end of Section \ref{locdeg}) and Theorem~\ref{sphere_cor} (end of Section \ref{Hksection}).

First, we define the notion of non-degenerate harmonic maps between surfaces and show that such maps are discrete (cf.~Lemma~\ref{discrete}) and open (cf.~Proposition~\ref{omt}). By further  analysis, we demonstrate that the branch set is discrete and  every such map is a branched cover (cf.~Theorem~\ref{itsbranched}).  In Proposition~\ref{Main2}, we relate the stretch constant of an Alexandrov tangent map to  the quasiconformal constant of the harmonic maps. Finally, we show that every almost conformal harmonic map is non-degenerate (cf.~Lemma~\ref{nd_lemma}).
  \end{itemize}

{\sc Part II.}  We apply the results of Part I to find a  harmonic conformal parameterization of a locally CAT($\kappa$) sphere $(\Sp^2,d)$. We 
start by using the curvature assumption to construct a finite energy map. We then  employ Corollary~\ref{cor:bubbling}, i.e.~the generalization of the Sacks-Uhlenbeck ``bubbling",  asserting 
the existence of a  harmonic map $u:\Sp^2 \rightarrow (\Sp^2,d)$. Although $u$ may not be a homeomorphism (it may not  be a  degree 1 map), Theorem~\ref{sphere_cor} tells us that it is a 
conformal branched covering. The map $u$ thus defines a complex structure on $\Sp^2$ and a map (which we call $\text{id}$ since it is essentially the identity map) from the quotient space defined by the branched 
cover. We study the relationship between the energy of an almost conformal homeomorphism and the area of its image to show that all such maps satisfy the expected area and energy equality and moreover are locally energy minimizing. Thus,  $\text{id}$ 
is, at least away from the branch points of $u$,   an almost conformal harmonic homeomorphism. Applying the
 removable singularities theorem of \cite{paper2}, we demonstrate that $\text{id}$ extends to an almost conformal harmonic homeomorphism on all of $\Sp^2$. The 1-quasiconformality follows from Theorem~\ref{sphere_cor}. Finally, we prove that the map is unique up to a M\"obius transformation.  The following is an outline of  {\sc Part II}.
\begin{itemize}
\item[\S5]{\sc Proof of Theorem~\ref{Main3}}.
\begin{itemize}
\item[\S5.1] We prove the uniqueness statement in Proposition~\ref{unique_mobius}:    If a conformal harmonic homeomorphism exists, then it is unique up to a M\"obius  transformation of $\Sp^2$. 
\item [\S5.2]  We explore the relationship between energy of a map and area  of its image.  In particular, for monotone maps  into a CAT($\kappa$) surface, being almost conformal is equivalent to energy being equal to twice the 2-dimensional Hausdorff measure of the image (cf.~Lemma~\ref{coarea}). 
\item[\S5.3]  We complete the proof of  Theorem~\ref{Main3} by first proving the existence of a finite energy map and then appealing to Corollary~\ref{cor:bubbling} and Theorem~\ref {sphere_cor} to find an almost conformal harmonic branched cover. From there the proof follows as outlined above.
\end{itemize}
\end{itemize}
 
\noindent {\sc Acknowledgements:  }The authors would like to thank A.~Lytchak and M.~Romney for their interest in this work and useful conversations. \section{Preliminaries}\label{prelims}

\subsection{CAT($\kappa$) space}  We review the notion of a CAT($\kappa$) space.   Intuitively, triangles in a CAT($\kappa$) space are ``slimmer" than corresponding ``model triangles" in a standard space of constant curvature $\kappa$. These spaces generalize Riemannian manifolds of sectional curvature bounded from above by $\kappa$. 

Let $\kappa>0$.  A metric space $(X,d)$ is called a \emph{$\frac{\pi}{\sqrt{\kappa}}$-geodesic space} if for each $P, Q \in X$ such that $d(P,Q) < \frac{\pi}{\sqrt{\kappa}}$, there exists a curve $\gamma_{PQ}$ such that the length of $\gamma_{PQ}$ is exactly $d(P, Q)$.  We call $\gamma_{PQ}$ a \emph{geodesic} between $P$ and $Q$.   We denote the geodesic ball of radius $r>0$ centered at $P \in X$ by $\BB^X_r(P)$ (or $\BB^{(X,d)}_r(P)$ whenever more than one distance function is defined on $X$).  We may drop the superscript $X$ when the context is clear.  Given a $\frac{\pi}{\sqrt{\kappa}}$-geodesic space $(X,d)$, a geodesic $\gamma_{PQ}$ with $d(P,Q)<\frac{\pi}{\sqrt{\kappa}}$ and  $t \in [0,1]$, let 
\[
P_t=(1-t) P + t Q
\]
 denote the point on $\gamma_{PQ}$ at distance $t d(P,Q)$ from $P$. Given  three points $P,Q,R \in X$ such that $d(P,Q)+d(Q,R)+d(R,S) < \frac{2\pi}{\sqrt{\kappa}}$, the \emph{geodesic triangle} $\triangle PQR$ is the triangle in $X$ with sides given by the geodesics $\gamma_{PQ}, \gamma_{QR}, \gamma_{RS}$.  

Let $\Sp^2$ be the standard unit sphere and let  $\Sp^2_\kappa$ denote the scaled version of $\Sp^2$ with  Gauss curvature $\kappa$. Let $\tilde d$ be the induced distance function on $\Sp^2_\kappa$.  
A \emph{comparison triangle} for the geodesic triangle $\triangle PQR$ in a $\frac{\pi}{\sqrt{\kappa}}$-geodesic space is a geodesic triangle $\triangle \tilde{P}\tilde{Q}\tilde{R}$ on  $\Sp^2_\kappa$ such that $d(P,Q)=\tilde d(\tilde{P},\tilde{Q})$, $d(Q,R)=\tilde d(\tilde{Q},\tilde{R})$  and $d(R,P)=\tilde d(\tilde{R},\tilde{P})$.  

\begin{definition}
Let $(X,d)$ be a metric space. Then $X$ is a CAT($\kappa$) space if it is a complete $\frac{\pi}{\sqrt{\kappa}}$-geodesic space satisfying the following:
If $\triangle PQR$ is a geodesic triangle with perimeter less than $\frac{2\pi}{\sqrt{\kappa}}$  and  $\triangle \tilde{P}\tilde{Q}\tilde{R}$ in $\Sp^2_{\kappa}$ is a comparison triangle, then,  for $t, \tau \in [0,1]$,
\begin{equation} \label{cat}
d(P_t,R_\tau) \leq \tilde d(\tilde{P}_t,\tilde{R}_\tau)
\end{equation}
where 
\begin{eqnarray*}
P_t=(1-t)P+tQ,  & &  R_\tau=(1-\tau)R+\tau Q,\\
\tilde{P}_t=(1-t)\tilde{P}+t\tilde{Q}, & &  \tilde{R}_\tau=(1-\tau)\tilde{R}+\tau\tilde{Q}.
\end{eqnarray*}
A complete geodesic space $X$ is said to be \emph{locally}   CAT($\kappa$) if, for every point $P$ of $X$, there exists $r>0$ sufficiently small such that $\overline{\BB^X_r(P)}$  is a  CAT($\kappa$) space.
\end{definition}

\begin{remark}
 A CAT(0) space (or an NPC space) is  a complete geodesic space satisfying inequality  (\ref{cat}) with $\Sp^2_\kappa$ replaced by $\R^2$ and with no perimeter restriction.  
\end{remark}
We recall the notion of angles and tangent spaces in a locally CAT($\kappa$) space $(X,d)$.  
Fix $q_0 \in X$, and let ${\mathcal G}_{q_0}$ be the set of all geodesics emanating from $q_0$.  For  $\gamma  \in {\mathcal G}_{q_0}$ (resp. $\hat{\gamma} \in {\mathcal G}_{q_0})$ and $q_1 \in \gamma$ (resp. ~$q_2 \in \hat{\gamma})$ with $q_1 \neq q_0$ (resp.~$q_2 \neq q_0$) sufficiently close to $q_0$,  the \emph{comparison angle }$\widetilde{\angle}_{q_0}(q_2,q_1)$ is the angle  at the point corresponding to $q_0$ of the comparison triangle to $\triangle q_0q_1q_2$ in ${\mathbb S}^2_\kappa$. 
By the CAT($\kappa$) assumption, the function
\begin{equation} \label{anglemonotone}
t \mapsto \widetilde{\angle}_{q_0}(Q(t),P(t))
\mbox{ is non-decreasing} 
\end{equation}
where $Q(t)$ (resp.~$P(t)$) is a constant speed parameterization of  $\gamma$ (resp.~ $\hat{\gamma}$) with $Q(0)=q_0$ (resp.~$P(0)=q_0$). Thus, the limit
\[
\angle (\gamma,\hat{\gamma}):=\lim_{t \rightarrow 0} \widetilde{\angle}_{q_0}(Q(t),P(t))
\]
exists and this is the \emph{angle} between the geodesics $\gamma$ and $\hat{\gamma}$.

 Define an equivalence relation in ${\mathcal G}_{q_0}$ by letting
\[
\gamma_1 \sim \gamma_2 \ \mbox{ \ if and only if \ } \ \angle (\gamma_1,\gamma_2)=0.
\]
The \emph{space of directions} ${\mathcal E}_{q_0}$ is the completion of the metric space of equivalence classes $[\gamma]$ of ${\mathcal G}_{q_0}$  with distance function $\Theta(\cdot,\cdot)$ defined by 
\[
\Theta([\gamma_1],[\gamma_2]) =\angle(\gamma_1,\gamma_2).
\]
The \emph{(Alexandrov) tangent cone of $(X,d)$ at $q_0$} is the CAT(0) space 
\[
T_{q_0}X= [0,\infty) \times {\mathcal E}_{q_0}/\sim',
\]
where $\sim'$ identifies all points of the form $(0,[\gamma])$ as the vertex ${\mathcal O}$, along with the distance function given by
\[
\delta^{{ 2}}((\rho_1, [\gamma_1]), (\rho_2,[\gamma_2])) = \rho_1^2+\rho_2^2-2\rho_1\rho_2 \cos \Theta([\gamma_1],[\gamma_2]). 
\]
For a sufficiently small neighborhood $\NN$ of $q_0$, there is a natural projection map 
\begin{equation} \label{logmap}
\log_{q_0}: \NN \rightarrow T_{q_0}X
\end{equation}
\[
\log_{q_0}(q)=(d(q,q_0), [\gamma_q])
\]
where $\gamma_q$ is a geodesic ray emanating from $q_0$ that goes through $q$.

The main interest in this paper is CAT($\kappa$) surfaces and spheres.

\begin{definition}  \label{lcs}
A  CAT($\kappa$) space (resp.~locally CAT($\kappa$) space) $(X,d)$ is a \emph{CAT($\kappa$) manifold} (resp.~\emph{locally CAT($\kappa$) manifold}) if, for  every point $p \in X$, there exists $r>0$ sufficiently small such that $\BB^X_r(p)$ is homeomorphic to a unit ball in  $\R^n$. We will say a CAT($\kappa$) manifold (resp.~locally CAT($\kappa$) manifold) $(X,d)$ is  
a  \emph{CAT($\kappa$) surface} (resp.~\emph{locally CAT($\kappa$) surface}) if $n=2$. Finally, a \emph{CAT($\kappa$) sphere} (resp.~\emph{locally CAT($\kappa$) sphere}) is a CAT($\kappa$) surface (resp.~locally CAT($\kappa$) surface) which is homeomorphic to $\Sp^2$.
\end{definition}

\begin{remark} \label{3properties}
If $(X,d)$ is a locally CAT($\kappa$) manifold, then
for each $q_0 \in X$ there exists $r>0$ sufficiently small such that the closed geodesic ball $\overline{\BB_r(q_0)}$ is a CAT$(\kappa$) space and the following properties are satisfied:  
\begin{itemize}
\item[(i)] (Uniqueness of geodesics) There exists a unique geodesic between every pair of  points in $\BB_r(q_0)$ and $\BB_\epsilon(q_0)$ is convex for every $\epsilon \in (0,r]$ (cf.~\cite[II.1.4]{bridson-haefliger}).
\item[(ii)] (Continuity of angles) For geodesics  $\gamma_p$ and $\gamma_q$ in $\BB_r(q_0)$, from $q_0$ to $p$ and $q$ respectively,  the function $(p,q) \mapsto \angle (\gamma_p,\gamma_q)$ is continuous (cf.~\cite[II.3.3]{bridson-haefliger}).
\item[(iii)] (Geodesic extendability)  Every geodesic from $q_0$ to a point in $\BB_r(q_0)$  can be extended to a geodesic from $q_0$ to a point in $\partial \BB_r(q_0)$  (cf.~\cite[II.5.12]{bridson-haefliger}).  
 \end{itemize}  
 \end{remark}

We use the following proposition  in our analysis of tangent maps for harmonic maps into CAT($\kappa$) surfaces. Since this is already known to the experts (e.g.~\cite{reshetnyak0}), we will only state it here and defer its proof to Appendix~\ref{appB}.

\begin{proposition} \label{coneovercurve}
If $(S,d)$ is a locally CAT($\kappa$) surface,  then the Alexandrov tangent cone $T_{q_0}S$ of $S$ at $q_0 \in S$ is a metric cone over a finite length simple closed  curve.  More precisely, the space of directions $\EE_{q_0}$ is isometric to a  finite length simple closed curve.
\end{proposition}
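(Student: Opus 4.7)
The plan is to identify $\EE_{q_0}$ with the boundary $\partial\BB_r(q_0)$ of a sufficiently small geodesic ball via the direction map $p\mapsto[\gamma_p]$; the metric-cone description of $T_{q_0}S$ is then immediate from its definition preceding (\ref{logmap}). Fix $r>0$ small enough that $\overline{\BB_r(q_0)}$ is CAT($\kappa$), satisfies properties (i)--(iii) of Remark~\ref{3properties}, and (by Definition~\ref{lcs}) is homeomorphic to a closed 2-disk, so that $\partial\BB_r(q_0)$ is a topological circle. Define
\[
\Phi:\partial\BB_r(q_0)\to\EE_{q_0},\qquad \Phi(p):=[\gamma_p],
\]
where $\gamma_p$ is the unique geodesic from $q_0$ to $p$ given by property (i). The goal is to show $\Phi$ is a homeomorphism onto a curve of finite length.

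Continuity of $\Phi$ is immediate from property (ii), since $\Theta(\Phi(p),\Phi(q))=\angle(\gamma_p,\gamma_q)$. Surjectivity follows by combining geodesic extendability (property (iii)) with the compactness of $\partial\BB_r(q_0)$: every pre-completion equivalence class of directions has a representative extending to length $r$ and hitting $\partial\BB_r(q_0)$, and the continuous image of a compact set is closed in $\EE_{q_0}$, hence captures the completion as well.

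The main obstacle is injectivity of $\Phi$, and this is precisely the step where the 2-manifold hypothesis is essential -- the $\overline{\mathbf{H}}$ example from the introduction exhibits a non-manifold CAT($\kappa$) space in which injectivity of the analogous map fails. The strategy is to argue by contradiction: assume distinct $p_1,p_2\in\partial\BB_r(q_0)$ satisfy $\angle(\gamma_{p_1},\gamma_{p_2})=0$. Since $\BB_r(q_0)$ is a topological disk, the two geodesic arcs $\gamma_{p_1},\gamma_{p_2}$ together with each of the two sub-arcs of $\partial\BB_r(q_0)$ joining $p_1$ to $p_2$ bound a nondegenerate topological subdisk with nonempty interior. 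Choosing a sufficiently fine cyclic subdivision $p_1=q'_1,q'_2,\ldots,q'_n=p_2,\ldots,q'_N=p_1$ of $\partial\BB_r(q_0)$ and iterating the CAT($\kappa$) comparison inequality (\ref{cat}) on the thin triangles $\triangle q_0 q'_i q'_{i+1}$, one bounds $\sum_i\angle(\gamma_{q'_i},\gamma_{q'_{i+1}})$ from above by the sum of comparison angles in $\Sp^2_\kappa$, which is uniformly finite (in the spirit of Reshetnyak's angle-sum estimate for geodesic polygons in CAT($\kappa$) surfaces). On the other hand, since each of the two subdisks has nonempty interior, the partial sum of $\angle(\gamma_{q'_i},\gamma_{q'_{i+1}})$ along each sub-arc must exceed a fixed positive quantity depending on the area of the subdisk -- a quantitative strengthening of the zero-angle hypothesis that is delivered by the manifold structure. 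Combined with the assumption $\angle(\gamma_{p_1},\gamma_{p_2})=0$, this contradicts the triangle inequality for the angle metric on $\EE_{q_0}$ applied along the partition.

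With injectivity established, $\Phi$ is a continuous bijection from the compact circle $\partial\BB_r(q_0)$ onto the Hausdorff space $\EE_{q_0}$, hence a homeomorphism, so $\EE_{q_0}$ is a simple closed curve. The total length of $\EE_{q_0}$ in the angle metric $\Theta$ equals the total cone angle at $q_0$, which the same CAT($\kappa$) comparison argument bounds by a finite constant. Since $T_{q_0}S$ is by definition the metric cone over $\EE_{q_0}$, this completes the proof.
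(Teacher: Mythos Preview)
Your approach has a genuine gap: the map $\Phi$ is \emph{not} injective in general, so the contradiction you sketch cannot succeed. Take a flat cone $C_\alpha$ of total angle $\alpha>2\pi$ (a CAT$(0)$, hence locally CAT$(\kappa)$, surface) and let $q_0$ lie at distance $r_0>0$ from the cone vertex $v$. For any $r>r_0$ the ball $\overline{\BB_r(q_0)}$ satisfies all of your hypotheses, yet the geodesic $\gamma_v$ from $q_0$ to $v$ admits a whole $(\alpha-2\pi)$-arc of geodesic extensions past $v$ (those leaving $v$ at angle $\geq\pi$ from the incoming segment). Each extension meets $\partial\BB_r(q_0)$ at a distinct point, and every one of these boundary points is sent by $\Phi$ to the single direction $[\gamma_v]\in\EE_{q_0}$. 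Thus $\Phi^{-1}([\gamma_v])$ is a nondegenerate closed arc. Your argument cannot rule this out: the two geodesics $\gamma_{p_1},\gamma_{p_2}$ coincide along $\gamma_v$, the ``subdisk'' they bound has its apex at $v$ rather than at $q_0$, and no positive lower bound on the Alexandrov angle at $q_0$ is forced by its area. The vague appeal to an angle-sum estimate does not repair this.

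The paper's proof (Appendix~\ref{appB}) never claims injectivity. It first establishes, via Moore's characterization of the circle \cite{moore}, that $\partial\BB_\epsilon(q_0)$ is itself a topological circle (this requires an argument---Definition~\ref{lcs} only gives that the \emph{open} ball is a disk, which you assumed away). It then shows that $\Phi$ is \emph{monotone}: each fiber $\Phi^{-1}([\gamma_Q])$ is a connected arc, handling exactly the branching phenomenon above by a Jordan-curve separation argument in the disk. Monotonicity, together with continuity and surjectivity, is enough to verify that $\EE_{q_0}$ is connected and that removing any two points disconnects it; a second application of \cite{moore} then identifies $\EE_{q_0}$ as a circle, and compactness gives finite length.
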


\subsection{Korevaar-Schoen energy and harmonic maps} We refer the reader to \cite{korevaar-schoen1} for  details and background on the notion of  finite energy (or $W^{1,2}$) maps into metric spaces that we briefly summarize here.  Note that we will be restricting the general theory of \cite{korevaar-schoen1} to case when the domain dimension is 2.   
\begin{definition}
Let $\Sigma$ be a Riemann surface.  A \emph{holomorphic disk} $\D \subset \Sigma$  is a coordinate neighborhood of $\Sigma$  identified as a unit disk in the complex plane $\C$ by the conformal coordinate $z=x+iy$.  We will say a holomorphic disk $\D$ \emph{is centered at $p$} if  $p \in \Sigma$ is identified with 0.   Furthermore, we denote for $r \in (0,1)$,
\[
\D_r=\{z \in \D:  |z|<r\}.
\]
\end{definition}

Let $\Sigma$ be a Riemann surface and $(X,d)$ be a complete metric space. 
The Sobolev space  $ W^{1,2}(\Sigma,X) \subset L^2(\Sigma,X)$ is the space of finite energy maps $u:\Sigma \rightarrow (X,d)$. We recall that (because we restrict to the case when the domain dimension is 2) the energy of a map depends only on the conformal class of $\Sigma$ (and not on the metric defined on $\Sigma$).  

For $f \in W^{1,2}(\Sigma,X)$, a holomorphic disk $\D$ and $\Gamma(T\D)$  the space of Lipschitz vector fields on $\D$,  we denote the \emph{directional energy density function} for $Z \in \Gamma(T\D)$ (cf.~\cite[Section 1.7ff.]{korevaar-schoen1}) and \emph{energy density function} (cf.~\cite[Section 1.10ff.]{korevaar-schoen1}) of $f$ on $\D$ by  
\[
|f_*(Z)|^2  \ \mbox{ and } \ |\nabla f|^2.
\]
Let $\{\partial_x, \partial_y\}$ be the standard orthonormal basis on $\D$.  
For a.e.~$z \in \D$, we have (cf.~\cite[(1.10v)]{korevaar-schoen1})
\begin{equation} \label{110v}
\frac 12{|\nabla f|^2(z)}= \frac{1}{2\pi} \int_{\omega \in \Sp^1} |f_*(\omega)|^2 (z) d\theta(\omega)
\end{equation}
where $\omega \in \Gamma(T\D)$ is given by $\omega=\cos \theta \cdot \partial_x+ \sin \theta \cdot \partial_y$ for a fixed constant  $\theta \in [0,2\pi)$.  Note that we have identified the set of such $\omega$'s with $\Sp^1=\{e^{i\theta}\}$ in the obvious way.  

The measure  $|\nabla f|^2\, dxdy$ is defined independently of the local holomorphic coordinates. The total energy of $f\in W^{1,2}(\Sigma,X)$ is given by
\[
	^{d}E^f =\int_{{\Sigma}}|\nabla f|^2 dxdy.
\]  
Given a subdomain $\Omega$ of $\Sigma$, we denote the energy of $f$ in $\Omega$ by
\[
^{d}E^f[\Omega]=
\int_{{\Omega}}|\nabla f|^2 dxdy.
\]  
Given $h \in W^{1,2}(\D,X)$, we define
\[
W^{1,2}_h(\D,X)=\{ f \in W^{1,2}(\D,X): Tr(h)=Tr(f)\}
\]
where $Tr(f)$ denotes the trace map of $f \in W^{1,2}(\D,X)$.  

\begin{definition}
\label{def:energyminizing}
We say $u \in W^{1,2}(\D,X)$ is \emph{energy minimizing} if there exists $P \in X$ and $\rho >0$ such that $u(\D) \subset \BB^X_{\rho}(P)$ and  
$u$ minimizes energy among all maps in $W^{1,2}_u(\D,\overline{\BB^X_\rho(P)})$. 
\end{definition}
\begin{definition}
\label{def:harmonic}
We say that a map $u \in W^{1,2}(\Sigma,X)$ is a \emph{harmonic map} if it is locally energy minimizing; more precisely, for every $p \in \Sigma$, there exists a holomorphic disk $\D \subset \Sigma$ centered at $p$ such that $u\big|_{\D}$ is energy minimizing.
\end{definition}

\begin{theorem}[\cite{serbinowski}, \cite{BFHMSZ}] \label{lip}
If $u:\D \rightarrow (X,d)$ is an energy minimizing map from a holomorphic disk $\D \subset \Sigma$ into a  CAT($\kappa$) space  $X$, then $u$ is locally Lipschitz continuous.
The Lipschitz constant depends only on $^{d}E^u$, the metric on $\Sigma$, and the distance to $\partial \D$. \end{theorem}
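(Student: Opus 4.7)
The plan is to derive a pointwise bound on the energy density of the form
$|\nabla u|^2(x_0) \le C(r_0) \cdot {}^{d}E^u$
for every $x_0$ whose Euclidean distance to $\partial \D$ is at least $r_0$. Such a pointwise bound translates into local Lipschitz continuity via the identity (\ref{110v}): one integrates the directional energy density $|u_*(\omega)|$ along straight line segments in a sub-disk and bounds $d(u(p), u(q))$ linearly in $|p - q|$, with Lipschitz constant depending only on ${}^{d}E^u$, the conformal metric on $\Sigma$, and $r_0$.

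The first ingredient is a weak subharmonic inequality for the squared-distance function $x \mapsto d^2(u(x), Q)$ for each fixed target $Q \in X$ within the CAT$(\kappa)$ radius of $u(\D)$. For a nonnegative test function $\eta \in C^\infty_c(\D)$ one constructs the variation
$$u_t(x) = (1 - t\eta(x)) u(x) + t\eta(x) Q,$$
defined pointwise along the unique geodesic from $u(x)$ to $Q$, which exists on a small enough sub-disk by Remark~\ref{3properties}(i). Energy minimality forces $\left.\tfrac{d^+}{dt}\, {}^{d}E^{u_t}\right|_{t=0} \ge 0$, and expanding this derivative using the CAT$(\kappa)$ quadrilateral (parallelogram) comparison inequality yields, in the distributional sense, $-\Delta \, d^2(u(\cdot), Q) \le -2|\nabla u|^2 + O(\kappa)$, where the error term is controlled by the diameter of $u(\D)$.

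The second step converts this subharmonic inequality into Hölder continuity of $u$ via a Morrey/Campanato-type decay estimate on $\rho \mapsto \rho^{-2\alpha} \cdot {}^{d}E^u[B_\rho(x_0)]$. Hölder continuity then ensures that small sub-disks are mapped into small balls in $X$, making the $\kappa$-correction terms arbitrarily small and effectively reducing the subsequent analysis to the CAT$(0)$ setting of Korevaar-Schoen.

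The final and most delicate step upgrades Hölder to Lipschitz. One shows that $|\nabla u|^2$ itself is weakly subharmonic up to an error term controlled by the (now small) oscillation of $u$, and then applies Moser iteration, or a direct mean-value argument, to bound $|\nabla u|^2$ pointwise by its integral average on a slightly larger ball. The chief technical obstacle lies here: the weak subharmonicity of $|\nabla u|^2$ must be established through difference quotients and repeated application of the CAT$(\kappa)$ comparison, rather than by smooth second-order analysis. This is the nontrivial extension, carried out by Serbinowski (and refined by the argument in \cite{BFHMSZ}), of Korevaar-Schoen's original CAT$(0)$ strategy to the curved target setting.
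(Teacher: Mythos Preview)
The paper does not supply its own proof of this theorem: it is stated with attribution to \cite{serbinowski} and \cite{BFHMSZ} and invoked thereafter as a black box. There is therefore nothing in the paper to compare your argument against.

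Taken on its own terms, your outline is a faithful high-level summary of the strategy actually used in those references: first-variation along geodesic homotopies toward a fixed point $Q$ gives weak subharmonicity of $d^2(u(\cdot),Q)$ with a curvature correction; this yields H\"older continuity via a Morrey-type decay; and the upgrade to Lipschitz goes through a subharmonicity/mean-value argument for the energy density. That is indeed the architecture of the Korevaar--Schoen proof in the NPC case and of its CAT($\kappa$) extensions.

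That said, what you have written is an outline rather than a proof, and the place where the outline stops is exactly where the content lies. You correctly identify the ``chief technical obstacle'' as establishing weak subharmonicity of $|\nabla u|^2$ in the CAT($\kappa$) setting, and then immediately defer it to \cite{serbinowski} and \cite{BFHMSZ}. The difference-quotient and midpoint-comparison arguments needed here are genuinely delicate (one must control the $\kappa$-dependent defect in the quadrilateral inequality uniformly as the scale shrinks), and nothing in your sketch indicates how you would close them. A smaller point: deducing the Lipschitz bound by ``integrating $|u_*(\omega)|$ along straight line segments'' presupposes enough a priori regularity for that line integral to make sense; in practice one reads off the Lipschitz constant directly from the pointwise $L^\infty$ bound on $|\nabla u|^2$ together with the Korevaar--Schoen definition of directional energy, rather than via a fundamental-theorem-of-calculus step.
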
 

\begin{lemma} \label{contbdry}
If $u$ is as in Theorem~\ref{lip} and $Tr(u) \in C^0(\partial \D)$, then 
\[
\bar u=\left\{
\begin{array}{ll}
u & \mbox{in }\D
\\
Tr(u) & \mbox{in } \partial \D
\end{array}
\right.
\]
 is continuous in $\bar \D$.  
\end{lemma}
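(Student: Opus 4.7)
The plan is to establish boundary continuity of the extended map at each point $p_0 \in \partial \D$, that is, to show that $u(z) \to Q_0 := Tr(u)(p_0)$ as $\D \ni z \to p_0$; interior continuity is already given by Theorem~\ref{lip}, and continuity along $\partial \D$ is hypothesized. Fix $p_0 \in \partial \D$. By Definition~\ref{def:energyminizing}, $u(\D) \subset \overline{\B^X_\rho(P)}$ for a CAT($\kappa$) ball with unique geodesics; after shrinking $\rho$ if necessary, the standard $\kappa$-modified squared distance $f_\kappa(d(\cdot,Q_0))$ (with $f_0(r) = r^2/2$ and $f_\kappa(r) = \kappa^{-1}(1-\cos(\sqrt{\kappa}\,r))$ for $\kappa>0$) is convex along geodesics on the image.

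The first and main step is to show that $\phi(z) := f_\kappa(d(u(z),Q_0))$ satisfies a weak subharmonic differential inequality of the form $\Delta \phi \geq -C\phi$ on $\D$, with $C$ depending only on $\kappa$ and $\rho$. I would argue this by testing the energy-minimizing property of $u$ against the geodesic-homotopy competitor $u_t(z) := (1-t)u(z) + t\,Q_0$, which is admissible since the image lies in a CAT($\kappa$) ball with unique geodesics; the first variation of energy at $t=0$, combined with the $\kappa$-convexity of $f_\kappa \circ d(\cdot,Q_0)$ along geodesics, yields the claimed inequality in the distributional sense. This is the Korevaar--Schoen / Serbinowski computation carried out in \cite{korevaar-schoen1} and \cite{serbinowski}.

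Given this subharmonicity, I would conclude by a two-constant maximum principle on the planar localization $\Omega_r := \D \cap \B_r(p_0)$. For any $\eps > 0$, continuity of $Tr(u)$ provides $r > 0$ such that $d(Tr(u)(p),Q_0) < \eps$ on $\Gamma_1 := \partial\D \cap \overline{\B_r(p_0)}$. On the complementary arc $\Gamma_2 := \D \cap \partial\B_r(p_0)$, one has the crude bound $\phi \leq f_\kappa(M)$ where $M := \mathrm{diam}\,u(\D) < \infty$. Combining the weak subharmonicity of $\phi$ (after multiplication by a standard exponential factor handling the $-C\phi$ term) with planar harmonic-measure estimates, we have
\[
\limsup_{\D \ni z \to p_0} \phi(z) \leq f_\kappa(\eps),
\]
since the harmonic measure of $\Gamma_2$ from $z$ tends to $0$ as $z \to p_0$. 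Letting $\eps \to 0$ gives $u(z) \to Q_0$.

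The main obstacle is the weak subharmonicity of $\phi$: the competitor $u_t$ must be shown admissible, its energy finite and explicitly comparable to that of $u$, and the resulting first variation interpreted correctly in the Korevaar--Schoen Sobolev framework to produce a distributional differential inequality. Once that is in place, the remainder is a routine planar boundary maximum-principle argument.
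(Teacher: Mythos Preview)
Your approach is essentially correct and is a genuinely different route from the paper's. The paper argues as follows: given $\epsilon>0$ small enough that nearest-point projection onto $\overline{\BB_\epsilon(\bar u(z_0))}$ is distance (hence energy) non-increasing, it first uses the continuity of $Tr(u)$ and then the Courant--Lebesgue lemma to find a radius $r$ so that the \emph{entire} boundary $\partial(\D_r(z_0)\cap\D)$ is mapped by $\bar u$ into $\BB_\epsilon(\bar u(z_0))$. The energy-minimizing property then traps the interior: if $u$ left $\overline{\BB_\epsilon(\bar u(z_0))}$, composing with the projection would produce a competitor with the same trace and strictly smaller energy. No PDE is invoked; it is a direct variational comparison.

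Your argument replaces Courant--Lebesgue by harmonic measure: you allow $\phi$ to be large on the interior arc $\Gamma_2$ but exploit that the harmonic measure of $\Gamma_2$ seen from $z$ vanishes as $z\to p_0$. This is clean and avoids the need to locate a ``good'' circular arc. Two small points: first, under the radius restriction $d(u,Q_0)<\pi/(2\sqrt{\kappa})$ the Serbinowski inequality actually gives $\Delta\phi\geq (1-\kappa\phi)|\nabla u|^2\geq 0$, i.e.\ genuine weak subharmonicity, so the exponential correction you mention is unnecessary (your stated form $\Delta\phi\geq -C\phi$ would require a first-eigenvalue smallness argument on $\Omega_r$, which also works but is extra baggage). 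Second, you should note that the trace of $\phi$ on $\Gamma_1\subset\partial\D$ agrees with $f_\kappa(d(Tr(u),Q_0))$ because $f_\kappa(d(\cdot,Q_0))$ is Lipschitz, so the comparison with the harmonic majorant is justified in the $W^{1,2}$ sense. With those details, the maximum-principle argument goes through. The paper's proof is shorter and more self-contained; yours leans on the Serbinowski/Korevaar--Schoen differential inequality but has the advantage of being more quantitative and of avoiding Courant--Lebesgue.
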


\begin{proof}
By Theorem~\ref{lip}, it is sufficient to prove the continuity of $\bar u$ at $z_0 \in \partial \D$.  Let $\epsilon>0$ be sufficiently small such that the nearest point projection map onto any closed geodesic ball of radius $\epsilon$ is distance non-increasing in the geodesically convex, CAT($\kappa$) space $\NN \subset X$ (cf.~\cite[II.2.6(2)]{bridson-haefliger}).  
By the continuity of $Tr(u)$, there exists  $\delta_1>0$ sufficiently small such that 
\[
\bar u(\partial \D \cap \D_{\delta_1}(z_0)) \subset \BB_{\frac{\epsilon}{2}}(\bar u(z_0)).
\]  By the Courant-Lebesque Lemma, there  exists  $\delta \in (0,\delta_1)$,  $r \in (\delta^2, \delta)$ and $Q \in X$ satisfying
\[
\bar u(\partial \D_{r}(z_0) \cap \bar \D) \subset \BB_{\frac{\epsilon}{2}}(Q).
\]   
Since 
\[
\bar u(\zeta) \in \BB_{\frac{\epsilon}{2}}(\bar u(z_0)) \cap \BB_{\frac{\epsilon}{2}}(Q) \mbox{ for } \zeta  \in \partial \D \cap \partial \D_r(z_0),
\]
we have
\[
\bar u(\partial (\D_{r}(z_0) \cap \bar \D)) \subset \BB_\epsilon(\bar u(z_0)).
\]
By the energy minimizing property of $u$ and since the nearest point projection map into $\overline{\BB_\epsilon(\bar u(z_0))}$ does not increase energy,
\[
\bar u(\D_{r}(z_0) \cap \bar \D) \subset \overline{\BB_\epsilon(\bar u(z_0))}.
\]
\end{proof}

\subsection{Almost conformal maps} 
Let  $u: \Sigma \rightarrow (X,d)$ be a harmonic map  from a Riemann surface into a locally CAT($\kappa$) space.  Recall the construction in \cite{korevaar-schoen1}, \cite{mese} and \cite{BFHMSZ} of a continuous, symmetric, bilinear, non-negative $L^1$ tensorial operator associated  with $u$, 
\begin{equation} \label{pi}
\pi: \Gamma(T\Sigma) \times \Gamma(T\Sigma) \rightarrow L^1(\Sigma)
\end{equation}defined by
\[
\pi(Z,W) := \frac{1}{4} |u_*(Z+W)|^2-\frac{1}{4}|u_*(Z-W)|^2.
\]
This generalizes the notion of the pullback metric for maps into a Riemannian manifold, and hence we shall refer to $\pi$ also as the \emph{pullback metric} for $u$. The energy of $u$ can be written as
\[
^dE^u=\int_{\Sigma} \pi \left( \frac{\partial}{\partial x},\frac{\partial}{\partial x} \right)+ \pi \left( \frac{\partial}{\partial y},\frac{\partial}{\partial y} \right) dxdy.
\]
\begin{definition} \label{area_def}
The \emph{area of $u$} is  
\[
^dA^u =  \int_{\Sigma} \sqrt{\pi \left( \frac{\partial}{\partial x},\frac{\partial}{\partial x} \right) \pi \left( \frac{\partial}{\partial y},\frac{\partial}{\partial y} \right) -\left( \pi \left( \frac{\partial}{\partial x},\frac{\partial}{\partial y}\right)\right)^2} dxdy.
\]
\end{definition}

\begin{lemma} \label{holomorphic}
Let $u: \Sigma \rightarrow (X,d)$ be a harmonic map from a Riemann surface into a locally CAT($\kappa$) space. The \emph{Hopf differential} $\Phi=\phi\, dz^2$ of $u$, defined in a holomorphic disk $\D \subset \Sigma$ where 
\[
     \phi(z):=\left[ \pi \left( \frac{\partial}{\partial x},\frac{\partial}{\partial x} \right)  
                        - \pi \left( \frac{\partial}{\partial y},\frac{\partial}{\partial y} \right) 
                      -2i \pi \left( \frac{\partial}{\partial x},\frac{\partial}{\partial y}\right) \right], \, 
\]
is holomorphic. 
\end{lemma}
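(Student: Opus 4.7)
The plan is to prove holomorphicity of $\phi$ via the first variation of energy with respect to inner (domain) variations, exploiting that $u$ is locally energy minimizing. It suffices to show that $\phi$ satisfies $\partial_{\bar z} \phi = 0$ weakly in every holomorphic disk $\D \subset \Sigma$; since $\phi \in L^1(\D)$ by construction, Weyl's lemma then delivers holomorphicity.

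I would proceed as follows. Fix a compactly supported smooth vector field on $\D$, represented by a complex-valued function $\eta \in C^\infty_c(\D; \C)$, and consider the inner variation $F_t(z) = z + t\eta(z)$, which is a diffeomorphism of $\D$ for $|t|$ small, fixing $\partial \D$. Form the competitor $u_t := u \circ F_t^{-1} \in W^{1,2}_u(\D, X)$. By Definition \ref{def:harmonic}, possibly after shrinking $\D$ so that $u\big|_{\D}$ is energy minimizing, we have $\,^d E^{u} \le \,^d E^{u_t}$ for all small $t$, and hence
\[
\frac{d}{dt}\bigg|_{t=0} \,^d E^{u_t} = 0.
\]
The derivative is computed using the tensoriality of the pullback operator $\pi$ in (\ref{pi}), established in \cite{korevaar-schoen1}, \cite{mese}, and \cite{BFHMSZ}. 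A direct change of variables expresses $\,^d E^{u_t}$ as the integral over $\D$ of a quadratic form in the pushforward basis $(F_t)_*\partial_x, (F_t)_*\partial_y$ weighted by the Jacobian of $F_t^{-1}$; differentiating at $t=0$ produces a linear expression in $\partial \eta$ and $\bar\partial \eta$ with coefficients given precisely by the components $\pi(\partial_x,\partial_x)$, $\pi(\partial_y,\partial_y)$, and $\pi(\partial_x,\partial_y)$.

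Reorganizing this expression in complex coordinates $z = x+iy$, the stationarity identity becomes
\[
\mathrm{Re} \int_\D \phi(z)\, \partial_{\bar z} \eta \, dx\, dy = 0 \qquad \text{for all } \eta \in C^\infty_c(\D; \C),
\]
where $\phi$ is as in the statement. Since $\eta$ is arbitrary (and we may apply the identity to both $\eta$ and $i\eta$), this is equivalent to
\[
\int_\D \phi\, \partial_{\bar z} \eta \, dx\, dy = 0 \qquad \text{for all } \eta \in C^\infty_c(\D; \C),
\]
i.e. $\partial_{\bar z} \phi = 0$ as a distribution on $\D$. Weyl's lemma then yields that $\phi$ is holomorphic, and since the identification of $\Phi = \phi\, dz^2$ as a quadratic differential is coordinate independent (the conformal transformation law for the three real components of $\pi$ under a holomorphic change of coordinates reproduces the $dz^2$ transformation law), $\Phi$ is a globally defined holomorphic quadratic differential on $\Sigma$.

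The routine but nontrivial step is the first calculation: justifying that $t \mapsto \,^d E^{u_t}$ is differentiable at $t=0$ and identifying the derivative with the $\pi$-integral above. This relies on the fact that although $u$ lies only in $W^{1,2}$, the operator $\pi$ is a genuine $L^1$ tensor that transforms tensorially under Lipschitz changes of domain coordinates, a property proved in \cite{korevaar-schoen1} and refined in \cite{BFHMSZ}. Once this infinitesimal tensoriality is in hand, the remainder of the argument is the standard Noether computation familiar from the smooth case.
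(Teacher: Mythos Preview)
Your proposal is correct and takes essentially the same approach as the paper: both use inner (domain) variations $F_t$ of a holomorphic disk, compute the first variation of the Korevaar--Schoen energy, and conclude weak holomorphicity of $\phi$. The paper's proof is simply more terse---it uses the specific variation $F_t(z)=(1+t\zeta(z))z$, cites \cite[Lemma 1.1]{schoen} and \cite[Chapter 3]{jost} for the standard computation, and points to \cite[Theorem 2.3.2]{korevaar-schoen1} for exactly the change-of-variables/tensoriality step you flag as the nontrivial ingredient.
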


\begin{proof} 
Let $\zeta$ be a smooth function with compact support in a holomorphic disk $\D \subset \Sigma$.  For $\epsilon>0$ sufficiently small and $t \in (-\epsilon,\epsilon)$, consider the diffeomorphism $F_t:\Sigma \rightarrow \Sigma$ given in $\D$ by $F_t(z) = (1 + t \zeta(z))z$   and  $F_t=\ $identity outside of $\D$.  Using the domain variation $t \mapsto F_t$, the assertion  follows from following the argument of   \cite[Lemma 1.1]{schoen} (\cite[Chapter 3]{jost}), where the change of variables is justified by \cite[Theorem 2.3.2]{korevaar-schoen1}. 
\end{proof}

\begin{definition} \label{def:conformal}
The map $u \in W^{1,2}(\Sigma,X)$ is said to be {\em almost conformal} if, for any holomorphic disk $\D \subset \Sigma$ and  a.e.~$z \in \D$,
\[
\pi \left( \frac{\partial}{\partial x},\frac{\partial}{\partial x} \right)  
                        = \pi \left( \frac{\partial}{\partial y},\frac{\partial}{\partial y} \right) \ \ \mbox{ and }  \ \ 
                      \pi \left( \frac{\partial}{\partial x},\frac{\partial}{\partial y}\right)  =0.
\]  
 \end{definition}

\begin{lemma} \label{holomorphic0}
A harmonic map $u:\Sp^2 \rightarrow (X,d)$ from the standard 2-sphere to a locally CAT($\kappa$) space is almost conformal.
\end{lemma}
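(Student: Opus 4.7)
The plan is to combine Lemma~\ref{holomorphic} with the classical fact that the Riemann sphere admits no non-zero holomorphic quadratic differentials. By Lemma~\ref{holomorphic}, in any holomorphic disk $\D \subset \Sp^2$ the Hopf differential $\Phi = \phi\, dz^2$ of $u$ is holomorphic. The almost conformality condition in Definition~\ref{def:conformal} is precisely the assertion $\phi \equiv 0$, so it suffices to show that the Hopf differential vanishes identically.

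First, I would verify that the locally defined $\phi\, dz^2$ patch together into a globally defined holomorphic quadratic differential on $\Sp^2$. This follows from the fact that the pullback tensor $\pi$ of (\ref{pi}) is intrinsically defined (independent of local coordinates), combined with the standard computation showing that the components of $\pi$ assemble in any holomorphic chart into the $(2,0)$-part $\phi\, dz^2$, which therefore transforms under a holomorphic change of coordinates $w = w(z)$ by the $(dz/dw)^2$ factor. Hence $\Phi$ is a section of the square $K^{\otimes 2}$ of the canonical bundle on the compact Riemann surface $\Sp^2$.

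Next, I would cover $\Sp^2$ by the two standard stereographic charts $z$ and $w = 1/z$. Write $\Phi = \phi(z)\, dz^2$ on $\C$, with $\phi$ entire by Lemma~\ref{holomorphic}. Under the change of variable, $dz = -w^{-2} dw$, so
\[
\Phi = \frac{\phi(1/w)}{w^4}\, dw^2.
\]
Holomorphicity of $\Phi$ at the point $w = 0$ (corresponding to $z = \infty$) forces $\phi(1/w)/w^4$ to be holomorphic at the origin, i.e.\ $\phi(z) = O(|z|^{-4})$ as $|z| \to \infty$. Since $\phi$ is entire, Liouville's theorem gives $\phi \equiv 0$.

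I expect no substantial obstacle: the only slightly technical point is justifying that the locally defined Hopf differential glues into a global section of $K^{\otimes 2}$, but this is a routine consequence of the coordinate-invariance of $\pi$ as a $(0,2)$-tensor on $\Sigma$. Once that is in hand, the conclusion reduces to the well-known fact that $H^0(\Sp^2, K^{\otimes 2}) = 0$, which can be proved either by the Liouville argument above or by noting that $\deg K^{\otimes 2} = -4 < 0$. With $\phi \equiv 0$, Definition~\ref{def:conformal} applies and $u$ is almost conformal.
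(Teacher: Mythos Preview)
Your proposal is correct and takes essentially the same approach as the paper: both reduce the claim to the classical fact that the only holomorphic quadratic differential on $\Sp^2$ is identically zero. The paper states this in a single sentence, while you additionally spell out why the locally defined Hopf differentials glue to a global section of $K^{\otimes 2}$ and supply the Liouville-type proof of the vanishing; this extra detail is fine but not needed.
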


\begin{proof}
 The only holomorphic quadratic differential on $\Sp^2$ is identically equal to 0.  
 \end{proof}

\begin{remark} \label{AE}
From the Cauchy-Schwarz inequality, 
\[
{^d}A^u \leq {^dE^u}/2
\]
with equality if and only if $u$ is an almost conformal map.
\end{remark}
\begin{definition}

Let  $u:\Sigma \rightarrow \Sigma$ be an almost conformal map and $\D \subset \Sigma$ be a holomorphic disk.    The function $\lambda_u: \D \rightarrow [0,\infty)$ defined by
\[
\lambda_u= \pi \left( \frac{\partial}{\partial x},\frac{\partial}{\partial x} \right)   = |\nabla u|^2/2
\]
is called the \emph{conformal factor} of $u$ in $\D$.  Note that the pullback metric of $u$ in $\D$ is given by $\lambda_u (dx^2+dy^2)$.  

\end{definition}

 \begin{theorem}[\cite{mese}, Theorem~4.1, Theorem~5.1 and Lemma~5.2] \label{lambdadifeq}
If $u:\Sigma \rightarrow (X,d)$ is an almost conformal harmonic map from a Riemann surface to a locally CAT($\kappa$) space and $\D$ a holomorphic disk, then the conformal factor $\lambda_u$ of $u$ in $\D$ satisfies
\begin{itemize}
\item $\lambda_u$ is locally bounded.
\item $\lambda_u \in W^{1,2}_{loc}(\D)$.
\item $\triangle \lambda_u \geq -2 \kappa \lambda_u^2$ weakly. 
\item $\triangle \log \lambda_u \geq -2 \kappa \lambda_u$ weakly.
\end{itemize}
\end{theorem}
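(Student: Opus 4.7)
The plan is to extract the four conclusions in a cascade from (i) the Lipschitz regularity of Theorem~\ref{lip}, (ii) target-variation arguments leveraging the CAT($\kappa$) triangle comparison, and (iii) a Caccioppoli estimate for Sobolev regularity. The guiding heuristic is the classical smooth identity: for an almost conformal map between Riemannian surfaces with pullback metric $\lambda_u\,|dz|^2$, the Gauss curvature of the pullback equals $-\tfrac{1}{2\lambda_u}\triangle\log\lambda_u$ and is bounded above by the target Gauss curvature at $u(z)$; the task is to replace this equality, which presupposes smoothness, by a one-sided distributional inequality derived from comparison geometry alone.

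First I would invoke Theorem~\ref{lip}: on any subdisk $\D'\Subset\D$ the map $u$ is Lipschitz with a constant $L$ depending on $^dE^u$ and $\mathrm{dist}(\D',\partial\D)$, and formula \eqref{110v} then bounds the Korevaar-Schoen density $\lambda_u=\tfrac{1}{2}|\nabla u|^2$ by $L^2$ a.e., giving the first claim. For the third inequality I would build a one-parameter family of competitors $u_t(z)=(1-t\eta(z))u(z)+t\eta(z)\,P$ for a fixed auxiliary point $P\in X$ in the convexity radius of the support and a nonnegative $\eta\in C^\infty_c(\D)$, using the geodesic interpolation notation of Section~\ref{prelims}. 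Energy-minimality of $u$ forces $\frac{d^2}{dt^2}\,{^dE^{u_t}}\big|_{t=0}\geq 0$, while the CAT($\kappa$) comparison yields an upper bound on this second derivative whose leading curvature contribution, after integration by parts against $\eta$, is $-2\kappa\int\eta\,\lambda_u^2\,dx\,dy$. Almost conformality, or equivalently the vanishing of the Hopf differential of Lemma~\ref{holomorphic}, collapses the $2\times 2$ pullback to the scalar $\lambda_u$ and eliminates cross-term contributions, leaving the weak inequality $\triangle\lambda_u\geq -2\kappa\lambda_u^2$. The Sobolev regularity $\lambda_u\in W^{1,2}_{\mathrm{loc}}(\D)$ then follows by a Caccioppoli argument: testing this inequality against $\varphi^2\lambda_u$ with $\varphi\in C^\infty_c(\D)$ (admissible because $\lambda_u\in L^\infty_{\mathrm{loc}}$, possibly after a routine mollification), integrating by parts, and absorbing the cross term via Young's inequality bounds $\int \varphi^2|\nabla\lambda_u|^2$ in terms of $\kappa\int\varphi^2\lambda_u^3+\int|\nabla\varphi|^2\lambda_u^2$, both finite.

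The fourth inequality $\triangle\log\lambda_u\geq -2\kappa\lambda_u$ is strictly stronger than the third; I would obtain it by the same target-variation scheme applied to a logarithmically reweighted test, or equivalently by comparing the pullback metric with the model round metric of constant curvature $\kappa$ and invoking the Reshetnyak-type convexity of the log density on small balls. The main obstacle throughout is the variational step: without any tensor calculus on a CAT($\kappa$) target, the curvature information must enter exclusively through the triangle-comparison axiom, and ensuring that the correct coefficient $-2\kappa$ (and not something weaker) is extracted from the second-order expansion of a two-parameter family of comparison triangles requires careful bookkeeping, complicated further by the possibly singular structure of tangent cones documented in Proposition~\ref{coneovercurve}.
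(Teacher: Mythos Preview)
This theorem is not proved in the paper; it is quoted verbatim from \cite{mese} (Theorem~4.1, Theorem~5.1 and Lemma~5.2) and used as a black box. There is therefore no in-paper proof to compare your proposal against.

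That said, your outline is broadly in the spirit of the original argument in \cite{mese}: local boundedness from the interior Lipschitz estimate, and the differential inequalities from target-variation competitors whose second variation is controlled via the CAT($\kappa$) triangle comparison, with almost conformality collapsing the pullback tensor to the scalar $\lambda_u$. One caution: your Caccioppoli step, testing $\triangle\lambda_u\ge -2\kappa\lambda_u^2$ against $\varphi^2\lambda_u$, presupposes enough weak differentiability of $\lambda_u$ to integrate by parts, which is exactly what you are trying to establish; ``routine mollification'' does not by itself justify the gradient bound without an a priori $W^{1,2}$ or $W^{1,1}$ estimate on $\lambda_u$. In \cite{mese} the $W^{1,2}_{\mathrm{loc}}$ regularity is obtained directly from the structure of the energy density via difference-quotient arguments intrinsic to the Korevaar--Schoen theory, rather than bootstrapped from the subharmonicity inequality; you should either follow that route or supply the missing justification for why $\nabla\lambda_u$ exists in the first place. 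Also, your remark that the $\log\lambda_u$ inequality is ``strictly stronger'' is correct on $\{\lambda_u>0\}$, but the weak formulation must accommodate the zero set of $\lambda_u$; this is handled in \cite{mese} and is not a triviality.
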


\subsection{Existence of harmonic maps} As mentioned in the introduction, Sacks and Uhlenbeck \cite{sacks-uhlenbeck}  discovered a ``bubbling phenomena" for harmonic maps from surfaces. The paper  \cite{BFHMSZ} considers an analogous  result  when the target space is a compact locally CAT($\kappa$) space.

\begin{theorem}[\cite{paper2}] \label{bubbling}
Let $\Sigma$ be a compact Riemann surface, $(X,d)$ a compact locally CAT($\kappa$) space and $\varphi \in C^0 \cap W^{1,2}(\Sigma,X)$. Then either there exists a harmonic map $u:\Sigma \rightarrow (X,d)$ homotopic to $\varphi$ or an almost conformal harmonic map $v:\mathbb S^2 \rightarrow (X,d)$.
\end{theorem}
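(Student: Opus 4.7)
The plan is to adapt the Sacks-Uhlenbeck $\alpha$-energy perturbation to the CAT($\kappa$) metric-space setting. For each $\alpha>1$ I would define the perturbed Korevaar-Schoen functional
\[
E_\alpha^u = \int_\Sigma (1+|\nabla u|^2)^\alpha\,dxdy
\]
and minimize it among maps in $W^{1,2}(\Sigma,X)$ homotopic to $\varphi$. By compactness of $X$, lower semicontinuity on $W^{1,2}(\Sigma,X)$, and the improved coercivity of the $\alpha$-functional (which promotes minimizing sequences to equicontinuous ones), a minimizer $u_\alpha$ exists. An $\alpha$-analogue of Theorem~\ref{lip} yields local Lipschitz continuity of each $u_\alpha$, and since $E_1^{u_\alpha}\leq E_1^\varphi$, the unperturbed energies are uniformly bounded as $\alpha\to 1^+$.

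The second step is a convergence-or-concentration dichotomy. Either (a) the local Lipschitz constants of $\{u_{\alpha_i}\}$ are uniformly bounded on $\Sigma$, in which case Arzelà-Ascoli combined with the Korevaar-Schoen convergence theory extracts a subsequential limit $u:\Sigma\to(X,d)$ which is harmonic (by lower semicontinuity of energy and a variational comparison) and homotopic to $\varphi$ (by uniform convergence); or (b) the Lipschitz constants blow up at some point $p\in\Sigma$. In case (b), pick $p_i\to p$ and $r_i\to 0$ realizing the blow-up rate, and form the rescaled maps $\tilde u_i(z) = u_{\alpha_i}(p_i+r_iz)$. Uniform local energy bounds together with Theorem~\ref{lip} (applied to the rescaled domains) produce a subsequential limit $v_0:\R^2\to(X,d)$ that is harmonic, of finite energy, and non-constant provided an $\varepsilon$-regularity estimate is in place. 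A removable singularities theorem for finite-energy harmonic maps into CAT($\kappa$) spaces then extends $v_0$ across $\infty$ to a harmonic map $v:\mathbb S^2\to(X,d)$, and almost conformality of $v$ is immediate from Lemma~\ref{holomorphic0}, since any holomorphic Hopf differential on $\mathbb S^2$ vanishes identically.

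The main obstacle is step (b): carrying out the rescaling purely metrically and proving the $\varepsilon$-regularity / removable-singularities theorem that guarantees the bubble closes up on $\mathbb S^2$. Without a smooth target one cannot invoke elliptic Schauder estimates, so one must rely on the CAT($\kappa$) triangle comparison, subharmonicity of quantities like $|\nabla u|^2$ (in the Korevaar-Schoen weak sense), and holomorphicity of the Hopf differential from Lemma~\ref{holomorphic}, to obtain a small-energy regularity result and then use it to remove isolated point singularities of finite total energy. A secondary subtlety is the "no small bubbles" estimate that ensures the limit $v_0$ is non-trivial, which follows from the same $\varepsilon$-regularity once one shows that a definite amount of energy concentrates near $p$ in the blow-up scale.
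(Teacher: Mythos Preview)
The paper does not prove this theorem; it is quoted from \cite{paper2} and used as a black box, so there is no proof in the present paper to compare your attempt against.

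That said, your proposed route via the Sacks--Uhlenbeck $\alpha$-energy has a genuine gap. The $\alpha$-energy method in \cite{sacks-uhlenbeck} depends critically on elliptic PDE regularity for the Euler--Lagrange system of $E_\alpha$: $\alpha$-harmonic maps into a smooth Riemannian manifold are smooth because the perturbed equation is subcritical, and this is what produces the uniform $C^0$/Lipschitz control you invoke before passing $\alpha\to 1$. For CAT($\kappa$) targets there is no Euler--Lagrange equation in the classical sense, and no ``$\alpha$-analogue of Theorem~\ref{lip}'' exists in the literature or is derivable by the arguments you sketch. The regularity theory of \cite{serbinowski} and \cite{BFHMSZ} (Theorem~\ref{lip}) is specific to \emph{energy} minimizers, i.e.\ the case $\alpha=1$, and relies on convexity of the squared distance and triangle-comparison estimates that do not adapt to minimizers of $\int (1+|\nabla u|^2)^\alpha$. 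Without this, you have no equicontinuity for the minimizing sequence of $E_\alpha$, no regularity for $u_\alpha$, and hence no meaningful blow-up limit.

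The actual proof in \cite{paper2} bypasses this obstacle by abandoning the $\alpha$-perturbation entirely and using instead a harmonic replacement scheme in the spirit of Colding--Minicozzi: one runs a discrete tightening process that successively replaces the map on small balls by local Dirichlet energy minimizers, for which Theorem~\ref{lip} \emph{is} available. The dichotomy then arises from whether energy concentrates during this replacement process. Your intuition about the bubble step (rescaling, finite-energy harmonic limit on $\R^2$, removable singularity at $\infty$, almost conformality from Lemma~\ref{holomorphic0}) is correct in spirit and is indeed carried out in \cite{paper2}, but the mechanism that produces the near-harmonic sequence to blow up is different from what you propose.
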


In the case when $\Sigma$ is the standard 2-sphere $\Sp^2$, Theorem~\ref{bubbling} implies the following.

\begin{corollary} \label{cor:bubbling}
 If there exists a  continuous, finite energy map $h$ from the standard 2-sphere  $\Sp^2$ into a compact locally CAT($\kappa$) space $(X,d)$ then there exists an almost conformal harmonic map $u:\Sp^2 \rightarrow (X,d)$.
\end{corollary}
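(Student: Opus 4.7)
The plan is to apply Theorem~\ref{bubbling} directly with $\Sigma = \Sp^2$ and $\varphi = h$, then observe that in the specific setting where the domain is $\Sp^2$, both alternatives in the dichotomy already give an almost conformal harmonic map. The hypothesis requires $\varphi \in C^0 \cap W^{1,2}(\Sp^2, X)$, which is precisely what is assumed about $h$, so Theorem~\ref{bubbling} is immediately applicable.

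From the dichotomy, we obtain either (a) a harmonic map $u:\Sp^2 \to (X,d)$ homotopic to $h$, or (b) an almost conformal harmonic map $v:\Sp^2 \to (X,d)$. In case (b) there is nothing left to prove. In case (a), the harmonic map $u$ is defined on $\Sp^2$, so Lemma~\ref{holomorphic0} applies: its Hopf differential $\Phi = \phi\, dz^2$ (which is holomorphic by Lemma~\ref{holomorphic}) is a holomorphic quadratic differential on the Riemann sphere, and hence vanishes identically. By the definition of $\phi$, this forces
\[
\pi\!\left(\tfrac{\partial}{\partial x},\tfrac{\partial}{\partial x}\right) = \pi\!\left(\tfrac{\partial}{\partial y},\tfrac{\partial}{\partial y}\right) \quad \text{and} \quad \pi\!\left(\tfrac{\partial}{\partial x},\tfrac{\partial}{\partial y}\right) = 0
\]
almost everywhere in any holomorphic disk, so $u$ is almost conformal by Definition~\ref{def:conformal}.

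There is no real obstacle here; the corollary is essentially a direct specialization. The only content beyond invoking Theorem~\ref{bubbling} is the observation that ``harmonic from $\Sp^2$'' automatically upgrades to ``almost conformal harmonic'' via the vanishing of the Hopf differential on the sphere, which is already recorded as Lemma~\ref{holomorphic0}. Thus the proof is a two-line reduction: apply Theorem~\ref{bubbling}, and in the first alternative invoke Lemma~\ref{holomorphic0} to conclude almost conformality.
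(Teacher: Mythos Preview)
Your proposal is correct and matches the paper's approach: the paper simply states that the corollary follows from Theorem~\ref{bubbling} when $\Sigma=\Sp^2$, and your argument makes this explicit by invoking Lemma~\ref{holomorphic0} in the first alternative.
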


\subsection{Metric differential and Jacobian}
Throughout this subsection, $(X,d)$ will denote a complete metric space.  The following definitions are  given in \cite{kirchheim} and \cite[Definition 7.9]{karmanova} respectively.

\begin{definition}
Let $f:\D \rightarrow X$ and $z_0 \in \D$.   If there exists a seminorm $s:\C \rightarrow [0,\infty)$ satisfying
\[
 \lim_{z \rightarrow z_0} \frac{s(z-z_0) - d(f(z), f(z_0))}{|z-z_0|}=0,
\]
then $\text{MD}(f,z_0):=s$ is said to be the  \emph{metric differential} of $f$  at $z_0 \in \D$.
\end{definition}

\begin{remark}
Kirchheim \cite[Theorem 2]{kirchheim} proved that if $f:\D \rightarrow X$ is a Lipschitz map, then MD$(f,z_0)$ exists for a.e.~$z_0 \in \D$.
\end{remark}

\begin{definition}
Let $f:\D \rightarrow X$ and $z_0 \in \D$.   If there exists a seminorm $s:\C \rightarrow [0,\infty)$ satisfying
\[
\text{ap} \lim_{z \rightarrow z_0} \frac{s(z-z_0) - d(f(z), f(z_0))}{|z-z_0|}=0,
\]
then  $\text{MD}_{ap}(f,z_0):=s$ is said to be the \emph{approximate metric differential} of $f$  at $z_0 \in \D$.
Recall that a function $\varphi: \D \rightarrow \R$  has an \emph{approximate limit} $L=\text{ap }\lim_{z \rightarrow z_0}\varphi(z)$  at $z_0$  if there exists a set $A$ that has density 1 at $z_0$ such that if $z_n$ is a sequence in $A$ and $z_n \rightarrow z_0$,  then $\varphi(z_n) \rightarrow L$.  
\end{definition} 

\begin{remark}
Let $f, \hat f:\D \rightarrow X$ and $A \subset \D$ be a measurable set such that $f=\hat f$ in $A$.   If $z_0 \in A$ is a  density 1 point of $A$ such that $\text{MD}(\hat f, z_0)$ exists, then $\text{MD}_{ap}(f,z_0)$ exists and 
$\text{MD}(\hat f, z_0)=\text{MD}_{ap}(f,z_0)$.
\end{remark}
The following definition can be found in  \cite[Definition 5]{kirchheim} or  \cite[Theorem 7.10]{karmanova}.
\begin{definition}
The Jacobian of a map $f:\D \rightarrow X$ at $z_0 \in \D$ is defined as
 \[
 \JJ_f(z_0) =
  \left( \frac{1}{2\pi} \displaystyle{ \int_{\omega \in \Sp^1} \big( \text{MD}_{ap}(f,z_0)(\omega)  \big)^{-2}d\HH^1(\omega)} \right)^{-1} 
  \]
  whenever $\text{MD}_{ap}(f,z_0)(\omega) \neq 0$ for a.e.~$\omega \in \Sp^1$.  Otherwise, define $\JJ_f(z_0)=0$.  
\end{definition}

The following lemma relates  the metric differential of a finite energy map to its  directional energy density function. We defer the proof to Appendix \ref{appC}.

\begin{lemma} \label{KSjac}
If $f \in W^{1,2}(\D,X)$, then for a.e.~$z_0 \in \D$
\[
 \mathrm{MD}_{ap}(f,z_0)(\omega)=\left|f_*(\omega)\right|(z_0), \ \ \text{a.e.}~\omega \in \Sp^1
 \]
In particular,  the Jacobian of  $f$ at a.e.~$z_0 \in \D$  with  $\text{MD}_{ap}(f,z_0)(\omega) \neq 0$ for a.e.~$\omega \in \Sp^1$ is
  \[
 \JJ_f(z_0) =
 \left(  \frac{1}{2\pi} \displaystyle{\int_{\omega \in \Sp^1}  |f_*(\omega)|^{-2}(z_0) d\HH^1(\omega) }\right)^{-1}
 \]
whenever  $|f_*(\omega)|^2(z_0) \neq 0$ a.e.~$\omega \in \Sp^1$.  Otherwise,  $\JJ_f(z_0)=0$.
\end{lemma}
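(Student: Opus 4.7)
The strategy is to reduce the statement to the Lipschitz case via a Lusin-type approximation and then compare both sides of the claimed identity. By composing with the Kuratowski isometric embedding $X \hookrightarrow \ell^\infty(X)$ we may treat $f$ as a Banach-space valued Sobolev map and invoke the standard Lusin approximation (alternatively, the Hajłasz--Kinnunen approximation for metric-valued Sobolev maps): there exist pairwise disjoint measurable sets $A_k \subset \D$ with $\left| \D \setminus \bigcup_k A_k \right| = 0$ and Lipschitz maps $\hat f_k : \D \to X$ such that $f|_{A_k} = \hat f_k|_{A_k}$. Kirchheim's theorem applied to each $\hat f_k$ yields existence of $\mathrm{MD}(\hat f_k, z_0)$ for a.e.\ $z_0 \in \D$, and then the remark preceding the lemma guarantees that at every density one point $z_0 \in A_k$ the approximate metric differential $\mathrm{MD}_{ap}(f, z_0)$ exists and equals $\mathrm{MD}(\hat f_k, z_0)$.

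Next I would establish, for a Lipschitz map $g : \D \to X$, the pointwise identity $|g_*(\omega)|(z) = \mathrm{MD}(g,z)(\omega)$ at a.e.\ $z \in \D$. The directional energy density $|g_*(\omega)|^2$ is defined in \cite[\S 1.9]{korevaar-schoen1} as the weak $L^1$-limit of the approximate densities $e^g_\epsilon(\cdot, \omega) := d^2(g(\cdot + \epsilon \omega), g(\cdot))/\epsilon^2$ as $\epsilon \to 0$. Because $g$ is Lipschitz, these densities are uniformly bounded; at any $z$ where $\mathrm{MD}(g,z)$ exists they converge pointwise to $\mathrm{MD}(g,z)(\omega)^2$. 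Dominated convergence upgrades weak to strong $L^1$ convergence on compact sets and identifies the weak limit with the pointwise one, giving the claim.

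To transfer this identity from $\hat f_k$ back to $f$, I would use locality of the directional energy: if two maps in $W^{1,2}(\D,X)$ agree on a measurable set $A$, then their directional energy densities agree a.e.\ on $A$ (in both $z$ and $\omega$). This follows by Fubini on $\D \times \Sp^1$: at a density one point $z_0 \in A$ and for generic $\omega$, the line $\{z_0 + \epsilon \omega : \epsilon \in \R\}$ meets $A$ in a density one subset near $0$, so the approximate difference quotients defining $|f_*(\omega)|(z_0)$ and $|(\hat f_k)_*(\omega)|(z_0)$ coincide along a sequence $\epsilon_j \to 0$ realizing the weak-$L^1$ densities. Chaining the three identifications gives
\[
|f_*(\omega)|(z_0) = |(\hat f_k)_*(\omega)|(z_0) = \mathrm{MD}(\hat f_k, z_0)(\omega) = \mathrm{MD}_{ap}(f, z_0)(\omega)
\]
for a.e.\ $z_0 \in A_k$ and a.e.\ $\omega \in \Sp^1$; taking the union over $k$ proves the first assertion. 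The Jacobian formula then follows by direct substitution into the definition of $\mathcal{J}_f$. I expect the main obstacle to be the locality statement, since $|f_*(\omega)|$ is defined via a weak $L^1$-limit rather than pointwise; a careful Fubini-slicing argument coupled with the observation that the Korevaar--Schoen weak limit may be computed along any sequence $\epsilon_j \to 0$ for which the approximate densities converge in $L^1$ appears to be the cleanest route.
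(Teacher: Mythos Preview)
Your overall strategy matches the paper's: decompose $\D$ into countable pieces on which $f$ is Lipschitz, extend each restriction via Kuratowski embedding plus Kirszbraun, apply Kirchheim to get the metric differential of the extension, and identify it with $\mathrm{MD}_{ap}(f,\cdot)$ at density-one points. The paper constructs the Lusin pieces explicitly via the Hardy--Littlewood maximal function of an upper gradient (coming from Reshetnyak's characterization of $W^{1,2}$), but this is only a cosmetic difference from your abstract Lusin approximation.

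The substantive divergence is in how you connect $\mathrm{MD}_{ap}(f,z_0)$ to $|f_*(\omega)|(z_0)$. You propose to first prove the identity for the Lipschitz extensions and then invoke a locality principle for directional energy densities, and you correctly flag this locality step as the delicate point since $|f_*(\omega)|$ is defined only as a weak-$L^1$ limit. The paper bypasses this entirely: it cites \cite[Theorem~1.8.1 and Lemma~1.9.5]{korevaar-schoen1}, which already assert that for a.e.\ $z_0$ and a.e.\ $\omega$ the \emph{honest} pointwise limit
\[
|f_*(\omega)|^2(z_0)=\lim_{r\to 0}\frac{d^2(f(z_0),f(z_0+r\omega))}{r^2}
\]
exists. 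Once this is in hand, the equality with $\mathrm{MD}_{ap}(f,z_0)(\omega)$ is immediate, since a genuine limit is also an approximate limit and the approximate metric differential has already been shown to exist. Your Fubini-slicing argument would eventually recover the same conclusion, but it is re-proving (a consequence of) the Korevaar--Schoen result rather than citing it; the paper's route is shorter and avoids the weak-versus-pointwise subtlety you anticipated.
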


\section{Tangent maps}\label{TC_section}
The goal of this section is to prove Theorem \ref{tangent_maps_are_cones} which shows that for harmonic maps into CAT($\kappa$) manifolds, an Alexandrov tangent map of a harmonic map $u$ is itself a tangent map of $u$. 
(Since harmonic maps are continuous, all the ``local'' results in this section stated for  CAT($\kappa$) spaces remain valid after replacing by locally CAT($\kappa$).)

\subsection{Construction of tangent maps}

 Let  
$
u:\D \rightarrow (X,d)$ be a harmonic map to a CAT($\kappa$) space, $p_0 \in \Sigma$ and $\D \subset \Sigma$ be a holomorphic disk centered at $p_0$.  
We will now summarize the construction of a \emph{tangent map} of $u$.  (For more details, we  refer the reader to \cite{korevaar-schoen2}  where the notion of convergence for a sequence of maps into different NPC  spaces first appears, and also \cite{{BFHMSZ}} where this notion is generalized from NPC to CAT($\kappa$) spaces.)

For  $\sigma>0$ sufficiently small, let 
\begin{equation}\label{mu}
\mu_{\sigma}:=\sqrt{\frac{\displaystyle{\int_{\partial \D_{\sigma}}d^2(u,u(0))\,d\theta}}{\sigma}}.
\end{equation}
We construct a  CAT($\mu_{\sigma}^2\kappa$) space $(X,d_{\sigma})$  by endowing $X$ with  a distance function  
\begin{equation} \label{dsigma}
d_{\sigma}(q,q')=\mu_{\sigma}^{-1} d(q,q').
\end{equation}
A \emph{blow up map}   of $u$ at $p_0$  is  
\[
u_{\sigma}:\D \rightarrow (X,d_{\sigma}),
\ \ 
u_{\sigma}(x)=u(\sigma x).
\]

By \cite[Proposition 6.5 and Section 8]{{BFHMSZ}}, 
\begin{equation} \label{orderlimitexists}
\lim_{\sigma \rightarrow 0}  \frac{{\displaystyle{\int_{\D} |\nabla u_\sigma|^2}\,dxdy } }{\displaystyle{\int_{\partial \D } d^2_\sigma(u_\sigma,u_\sigma(0))\, d\theta}} = \lim_{\sigma \rightarrow 0}  \frac{\sigma {\displaystyle{\int_{\D_\sigma} |\nabla u|^2}\,dxdy } }{\displaystyle{\int_{\partial \D_\sigma} d^2(u,u(0))\, d\theta}}=:\mathrm{ord}^u(0) \mbox{ exists and } \mathrm{ord}^u(0) \geq 1.
\end{equation}
The normalization by $\mu_\sigma$ implies that 
\begin{equation} \label{bmnorm}
\int_{\partial \D} d_\sigma^2(u_\sigma, u_\sigma(0)) \, d\theta=1.
\end{equation}
Thus, the energy of $u_\sigma$ is uniformly bounded, and 
by Theorem \ref{lip}, $\{u_{\sigma}\}$ is uniformly Lipschitz continuous in $\D_r$ for any $r \in (0,1)$.

We now inductively define maps 
$
\{u_{\sigma,i}\}$ 
and pullback pseudodistance functions
$\{\rho_{{\sigma,i}}\}
$
 as follows: First, we let 
 \begin{eqnarray*}
 \Omega_{0} & = & \D.
 \end{eqnarray*}
 Having defined $\Omega_{i-1}$, we inductively define
 \begin{eqnarray*}
\Omega_i & = &  \Omega_{i-1} \times \Omega_{i-1} \times [0,1].
 \end{eqnarray*}
    Identify $\Omega_i \subset \Omega_{i+1}$ via the inclusion $x \mapsto (x,x,0)$ and set
\[
\Omega_{\infty} =  \bigcup_{i=0}^{\infty} \Omega_i.
\] Next, let 
\[
u_{\sigma,0}=u_{\sigma}:\Omega_0 \rightarrow (X,d_{\sigma}).
\]
Having defined the map 
$
u_{\sigma,i-1}:\Omega_{i-1} \rightarrow (X,d_{\sigma})$, we define  
\[
u_{\sigma,i}: \Omega_i \rightarrow (X,d_{\sigma}), \ \ u_{\sigma,i}(x,y,t)=\gamma(t)
\]
 where $\gamma:[0,1] \rightarrow (X,d_{\sigma})$ is the constant speed parameterization of the unique geodesic from $u_{\sigma,i-1}(x)=\gamma(0)$ to $u_{\sigma,i-1}(y)=\gamma(1)$. Let    
 \[
 \rho_{\sigma,i}: \Omega_{i} \times \Omega_i \rightarrow [0,\infty), \ \  \ \rho_{\sigma,i}(x,y) =d_{\sigma}(u_{\sigma,i}(x), u_{\sigma,i}(y)).
 \]  
Finally, we define 
\[
\rho_{\sigma, \infty}: \Omega_\infty \times \Omega_\infty \rightarrow [0,\infty), \ \ \rho_{\sigma, \infty}\big|_{\Omega_i}=\rho_{\sigma,i}.
\]  
We define an equivalence relation $\sim_{\rho_{\sigma,\infty}}$ by setting
\[
x \sim_{\rho_{\sigma,\infty}}y \Leftrightarrow \rho_{\sigma, \infty}(x,y)=0.
\]
Then $\rho_{\sigma,\infty}$ is a distance function on $\Omega_{\infty}/\sim_{\rho_{\sigma, \infty}}$ and let $X_\infty:=\overline{\Omega_{\infty}/\sim_{\rho_{\sigma, \infty}}}$ denote its metric completion.  We can isometrically identify 
\[
X_\infty:=\overline{\Omega_{\infty}/\sim_{\rho_{\sigma, \infty}}}
\approx \overline{Cvx(u_{\sigma}(\D))}.
\]

As explained in \cite{BFHMSZ},  there exists a sequence 
\begin{equation} \label{subseq}
\sigma_j \rightarrow 0
\mbox{ and } \rho_{*,i}:\Omega_i \times \Omega_i \rightarrow [0,\infty) \mbox{ for $i=0,1,\dots$}
\end{equation}
 such that the pullback pseudodistance functions  $\rho_{{\sigma_j,i}}$ converge locally uniformly to $\rho_{*,i}$ on each $\Omega_i$.  We thus obtain a pullback pseudodistance function 
\[
d_*:\Omega_{\infty} \times \Omega_{\infty} \rightarrow [0,\infty), \ \ \ d_*\big|_{\Omega_i}  =\rho_{*,i}.
\]  
We define an equivalence relation $\sim_*$ by setting
\[
x \sim_*y \Leftrightarrow d_*(x,y)=0
\]
and let  $\Omega_{\infty}/\sim_*$  denote the space of equivalent classes   $[ \cdot ]$.  
The metric completion  $X_*=\overline{\Omega_{\infty}/\sim_*}$  of $\Omega_{\infty}/\sim_*$ along with the distance function $d_*$ naturally defined on $X_*$ is an NPC space.  
Define
\begin{equation} \label{tm*}
u_*:  \D \rightarrow   (X_*,d_*),
\ \ \ \ u_*= \iota \circ \Pi
\end{equation}
where 
\[
 \iota: \D/\sim_* =\Omega_0/\sim_* \hookrightarrow X_*
\]
 is the inclusion map and 
\[
\Pi:\D \rightarrow \D/\sim_*, \ \ \ \Pi(z)=[z]
\]
is the natural projection map.

\begin{definition} \label{clu}
We say the sequence $f_j:\Omega_0 \rightarrow (X_j,d_j)$ 
 \emph{converges locally uniformly in the pullback sense} to $f: \Omega_0 \rightarrow (X_\infty,d_\infty)$
 if, for each $i$, the pullback pseudodistances of $f_{j,i}:  \Omega_i \rightarrow (X_j,d_j)$ 
 converge locally uniformly to the pullback pseudodistance of $f_i:\Omega_i \rightarrow (X_*,d_*)$.
\end{definition}

\begin{definition} \label{tm}
Any map $f:\D \rightarrow Y$ into an NPC space  is called a \emph{tangent map}  of a harmonic map $u:\D \rightarrow (X,d)$ 
  if there exists a sequence $\sigma_j \rightarrow 0$ such that $\{u_{\sigma_j}\}$ converges locally uniformly in the pullback sense to $f$.
\end{definition}    

\begin{remark}
For the sequence $\sigma_j \rightarrow 0$ as in (\ref{subseq}), the  sequence of blow up maps $\{u_{\sigma_j}\}$ converges  locally uniformly in the pullback sense to the map $u_*$ given by (\ref{tm*}) according to Definition~\ref{clu} and $u_*$ is a tangent map of $u:\D \rightarrow (X,d)$ according to Definition~\ref{tm}.
\end{remark}
As explained in \cite{gromov-schoen}, \cite{BFHMSZ},  and Appendix \ref{App_order}, a tangent map 
$u_*$ is a \emph{degree $\alpha$ homogeneous harmonic  map} where
\[
\alpha=\mathrm{ord}^u(0)=\mathrm{ord}^{u_*}(0) \geq 1
\]
 is the \emph{order} of $u$ at $0$ (cf.~(\ref{orderlimitexists})). 
  
This means that we can extend $u_*$ to $\overline \D$ by continuity and, for any $x \in \partial \D$,
\[
r \mapsto u_*(rx) \mbox{ parametrizes a geodesic in }X_*
\]
and
\begin{equation} \label{hg}
d_*(rx,0)=r^{\alpha}d_*(x,0), \ \ \forall r \in (0,1).
\end{equation} 

\subsection{Alexandrov tangent maps for maps with locally compact targets} \label{alex}

Next assume that $X$ is a locally compact CAT($\kappa$) space.  We review the notion of an Alexandrov tangent map introduced in \cite{daskal-meseFPT}.
Let  $q_0=u(p_0)$.  Let
\[
\log=\log_{q_0}:  \NN \subset X \rightarrow T_{q_0}X
\]
be the natural projection map (cf.~(\ref{logmap})) from a sufficiently small neighborhood $\NN$ of $q_0$.  Furthermore, let  $\{u_{\sigma_j}\}$ be a sequence of blow up maps at $p_0$ converging locally uniformly in the pullback sense to $u_*$.  We define
\[
\log_\sigma:(X,d_{\sigma}) \rightarrow (T_{q_0}X, \delta)
\]
 analogously to $\log$ (with $d$ replaced by $d_{\sigma}$).  Here we point out that the notion of a geodesic and of  $\angle$ are invariant under scaling of the distance function.  More specifically, if $\gamma$ is a  geodesic in $(X,d)$, then $\gamma$ is a geodesic in $(X,d_{\sigma})$.  Moreover, the value of $\angle (\gamma_1,\gamma_2)$ in $(X,d_{\sigma})$ is the same for any $\sigma>0$.   (On the other hand, $\widetilde{\angle}_{q_0}(q_1,q_2)$ depends on the distance function $d_{\sigma}$.)
 
 The map $\log_\sigma$ is a \emph{non-expansive map} (i.e.~distance non-increasing map). Thus, by Theorem \ref{lip},  $\{v_{\sigma} = \log_\sigma \circ u_{\sigma}\}$ is a sequence of maps into $T_{q_0}X$ with a uniform local Lipschitz bound.   Analogous to the construction of $u_{\sigma,i}$ and $\rho_{\sigma,i}$ from $u_{\sigma,0}=u_{\sigma}$, we start with $v_{\sigma,0}=v_{\sigma}$ and  inductively define
 \[
v_{\sigma,i}: \Omega_i \rightarrow (T_{q_0}X,\delta) 
\]
and
\[
 \hat \rho_{\sigma,i}: \Omega_{i} \times \Omega_i \rightarrow [0,\infty), \ \ \   \hat \rho_{\sigma,i}(x,y) =\delta(v_{\sigma,i}(x), v_{\sigma,i}(y)).
 \]  
 Since $X$ is locally compact, $T_{q_0}X$ is locally compact. Thus, for each $i$, there exists a sequence $\sigma_j \rightarrow 0$ such  that   $\{v_{\sigma_j,i} = \log_{\sigma_j} \circ u_{\sigma_j,i}\}$ converges locally uniformly to a  map 
 \[
 v_{*,i}: \Omega_i \rightarrow T_{q_0}X.
 \]
 By a diagonalization procedure, we conclude that (after taking a subsequence), 
 $\{v_{\sigma_j}:\D \rightarrow T_{q_0}X\}$ converges locally uniformly in the pullback sense to $v_*:\D \rightarrow T_{q_0}X$.

\begin{definition}
We will call the map  $v_*:\D \rightarrow (T_{u(p_0)}X,\delta)$  an \emph{Alexandrov tangent map} of a harmonic map $u:\D \rightarrow (X,d)$ at $p_0$.  
\end{definition}

\begin{definition}
Let $u:\Sigma \rightarrow (X,d)$ be a harmonic map from a Riemann surface into a locally compact CAT($\kappa$) space, $p_0 \in \Sigma$ and $\D$ a holomorphic disk centered at $p_0$.  A map $u_*$ (resp.~$v_*$) is said to be a \emph{tangent map of $u$ at $p_0$} (resp.~\emph{Alexandrov tangent map of $u$ at $p_0$})  if $u_*$ (resp.~$v_*$) is a tangent map (resp.~Alexandrov tangent map) of $u\big|_\D$.
\end{definition}

\begin{remark}
{ As previously stated, a tangent map $u_*$ is a harmonic map.  This follows from the fact that all blow up maps $u_\sigma$ are harmonic maps (since harmonicity is preserved under the rescaling of the target distance function)  and \cite[Theorem 3.11]{korevaar-schoen2}.  On the other hand,  an  Alexandrov tangent map $v_*$ is not necessarily harmonic.   
In Theorem~\ref{tangent_maps_are_cones}, we show that the local compactness of $X$ and the manifold hypothesis} are sufficient conditions for $v_*$ to be a harmonic map. 
\end{remark}

\subsection{Tangent maps for maps into CAT($\kappa$) manifolds.}

We now specialize to the case when $X$ is a
CAT($\kappa$) manifold (cf.~Definition~\ref{lcs}).

 \begin{theorem}\label{tangent_maps_are_cones}
Let $u:\Sigma \rightarrow (X,d)$ be a harmonic map from a Riemann surface into a CAT($\kappa$) manifold, $p_0 \in \Sigma$ and $\D$ a holomorphic disk centered at $p_0$. Then an Alexandrov tangent map of $u$ at $p_0$ is a tangent map of $u$ at $p_0$.  In particular, $v_*$ is a degree $\alpha=\mathrm{ord}^u(p_0)$ homogeneous harmonic  map.
\end{theorem}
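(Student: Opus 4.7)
The plan is to verify directly that the sequence $\{u_{\sigma_j}\}$ used to construct $v_*$ in Section~\ref{alex} converges locally uniformly in the pullback sense to $v_*$; once this is established, Definition~\ref{tm} gives that $v_*$ is a tangent map of $u$, and then the general properties of tangent maps recalled just after Definition~\ref{tm} (and in Appendix~\ref{App_order}) immediately yield that $v_*$ is a degree $\alpha=\mathrm{ord}^u(p_0)$ homogeneous harmonic map. After extracting a common subsequence $\sigma_j\to 0$, the pullback pseudodistances $\rho_{\sigma_j,i}$ (from $u_{\sigma_j}$) and $\hat\rho_{\sigma_j,i}$ (from $v_{\sigma_j}=\log_{\sigma_j}\circ u_{\sigma_j}$) both converge locally uniformly on each $\Omega_i$ to limits $\rho_{*,i}$ and $\hat\rho_{*,i}$. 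Since each $\log_\sigma$ is non-expansive, $\hat\rho_{*,i}\le \rho_{*,i}$ automatically; the whole theorem reduces to establishing the reverse inequality $\rho_{*,i}\le \hat\rho_{*,i}$.

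The central ingredient is an asymptotic isometry lemma for $\log_{q_0}$ on a CAT$(\kappa)$ manifold. For $p,q$ near $q_0$, setting $a=d(p,q_0)$, $b=d(q,q_0)$, $\angle=\angle(\gamma_p,\gamma_q)$, and $\widetilde\angle=\widetilde\angle_{q_0}(p,q)$, the CAT$(\kappa)$ inequality together with a Taylor expansion of the spherical law of cosines in $\Sp^2_\kappa$ gives $d(p,q)^2\le a^2+b^2-2ab\cos\widetilde\angle+O((a+b)^4)$, while the definition of the tangent cone gives $\delta(\log p,\log q)^2=a^2+b^2-2ab\cos\angle$. Combined with the non-expansiveness $\delta(\log p,\log q)\le d(p,q)$, these yield
\[
0\le d(p,q)^2-\delta(\log p,\log q)^2\le 2ab\bigl(\cos\angle-\cos\widetilde\angle\bigr)+O\bigl((a+b)^4\bigr).
\]
For sequences $p_n,q_n\to q_0$ with stabilizing directions in $\EE_{q_0}$ (the regime produced by blow-up), the monotonicity \eqref{anglemonotone}, the continuity of angles (Remark~\ref{3properties}(ii)), and the conical structure from Proposition~\ref{coneovercurve} force $\cos\angle-\cos\widetilde\angle\to 0$, so $d^2-\delta^2=o(a^2+b^2)$. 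Applying this with $p=u(\sigma_j x)$, $q=u(\sigma_j y)$ and using the uniform Lipschitz bound $a,b=O(\mu_{\sigma_j})$ from Theorem~\ref{lip}, the rescaling $d_\sigma=\mu_\sigma^{-1}d$ converts the error to $o(1)$ and delivers the base case $\rho_{*,0}=\hat\rho_{*,0}$.

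For $i\ge 1$, the iterative construction $u_{\sigma,i-1}\mapsto u_{\sigma,i}$ uses unique geodesics in $(X,d_\sigma)$, which is a CAT$(\mu_\sigma^2\kappa)$ space asymptotic to CAT$(0)$, while $v_{\sigma,i-1}\mapsto v_{\sigma,i}$ uses geodesics in the CAT$(0)$ cone $T_{q_0}X$. Assuming $\rho_{*,i-1}=\hat\rho_{*,i-1}$ inductively, I would establish $\rho_{*,i}=\hat\rho_{*,i}$ by showing that the image under $\log_\sigma$ of the geodesic between $u_{\sigma,i-1}(x)$ and $u_{\sigma,i-1}(y)$ is asymptotic, in pullback pseudodistance, to the corresponding geodesic in $T_{q_0}X$ between $v_{\sigma,i-1}(x)$ and $v_{\sigma,i-1}(y)$. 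The tools are uniqueness and stability of geodesics under endpoint perturbation (Remark~\ref{3properties}) and a further application of the asymptotic isometry lemma at the new scale. The main technical obstacle I anticipate is precisely this inductive step: controlling the accumulation of $o(1)$ errors across repeated geodesic constructions while preserving uniform convergence on compact subsets of every $\Omega_i$. The decisive structural input that makes this viable is Proposition~\ref{coneovercurve}: because $T_{q_0}X$ is a cone over a finite-length simple closed curve, the pathological collapse illustrated by the $({\bf H},g_{\bf H})$ example of the introduction is ruled out, so $T_{q_0}X$ serves as a faithful infinitesimal model of $(X,d)$ at $q_0$.
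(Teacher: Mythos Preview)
Your overall strategy—show that the pullback pseudodistances of $u_{\sigma_j}$ converge to those of $v_*$—is exactly right, and the non-expansiveness of $\log_\sigma$ does reduce the problem to the reverse inequality. However, there is a genuine gap at the heart of your asymptotic isometry lemma: you assert that $\cos\angle-\cos\widetilde\angle\to 0$ for sequences $p_n,q_n\to q_0$ ``with stabilizing directions,'' invoking continuity of angles (Remark~\ref{3properties}(ii)) and Proposition~\ref{coneovercurve}. But continuity of angles concerns $(p,q)\mapsto\angle(\gamma_p,\gamma_q)$, not the gap $\widetilde\angle-\angle$, and the monotonicity \eqref{anglemonotone} gives $\widetilde\angle\searrow\angle$ only along \emph{fixed} geodesics. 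For varying geodesics this convergence can fail in a general CAT$(\kappa)$ space—this is precisely the content of the $\overline{\bf H}$ example in the introduction—and you never invoke the one hypothesis that rules it out: geodesic extendability (Remark~\ref{3properties}(iii)). Without it, there is no mechanism that forces the directions $[\gamma_{p_n}]$ to stabilize in $\EE_{q_0}$, so your ``regime produced by blow-up'' remark is unsubstantiated.

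The paper's proof (Lemma~\ref{v*}) shows how the manifold hypothesis enters. Rather than estimate $\widetilde\angle-\angle$ for varying geodesics, one extends each geodesic $\gamma_{k,j}$ from $q_0$ through $u_{\sigma_j,i}(x_k)$ out to a point $\hat p_{k,j}$ on the compact sphere $\partial\BB_r(q_0)$; compactness yields (after a subsequence) $\hat p_{k,j}\to\hat p_k$, hence a \emph{fixed} limit geodesic $\gamma_k$. One then replaces $u_{\sigma_j,i}(x_k)$ by the point $p_{k,j}$ on $\gamma_k$ at the same distance $l_{k,j}$ from $q_0$; the replacement error vanishes after rescaling because the comparison angle between $\gamma_{k,j}$ and $\gamma_k$ is controlled by $d(\hat p_{k,j},\hat p_k)\to 0$. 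Since $p_{0,j},p_{1,j}$ now sit on fixed geodesics, the very definition of angle gives $\widetilde\angle_{q_0}^{(\mu_{\sigma_j})}(p_{0,j},p_{1,j})\to\angle(\gamma_0,\gamma_1)$, and the identity $d_*(u_{*,i}(x_0),u_{*,i}(x_1))=\delta(v_{*,i}(x_0),v_{*,i}(x_1))$ follows. Note also that this argument is applied directly to $u_{\sigma_j,i}(x_k)\in X$ for every fixed $i$, so the inductive step you flag as the ``main technical obstacle''—tracking how $\log_\sigma$ interacts with iterated geodesic interpolation—never arises.
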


 The proof of Theorem \ref{tangent_maps_are_cones} is a direct consequence of Lemma~\ref{v*} below.

\begin{lemma} \label{v*} 
Let $u: \Sigma \rightarrow (X,d)$ be a harmonic map from a Riemann surface  into a CAT$(\kappa$) space, $p_0 \in \Sigma$ and $\D$ be a holomorphic disk centered at $p_0$.   Furthermore, let $q_0=u(p_0) \in X$ and $\BB:=\BB_r(q_0)$ be a geodesic ball and assume the following:
\begin{enumerate}
\item[(i)] $\partial  \BB$ and $\overline{\BB}$ are compact.
\item[(ii)] For any point $q \in \BB$, there exists a geodesic $\gamma$, containing $q$, from $q_0$ to a point on $\partial \BB$. 
\end{enumerate}
(Note that if  $(X,d)$ is a CAT$(\kappa$) manifold, then (i)  holds since $X$ is locally compact and (ii) holds by \cite[Theorem II.5.12]{bridson-haefliger}.)
\vskip1.5mm
\noindent If  the sequence of  blow up maps $\{u_{\sigma_j}\}$ of $u$ at $p_0$ converges locally uniformly in the pullback sense to a tangent  map $u_*:\
\D \rightarrow X_*$  and $\{v_{\sigma_j}=\log_{\sigma_j} \circ u_{\sigma_j}\}$ converges locally uniformly in the pullback sense to  an Alexandrov tangent map $v_*:\D \rightarrow T_{u(p_0)}X$, then
$\{u_{\sigma_j}\}$
converges  locally uniformly in the pullback sense to $v_*$.  The Alexandrov tangent map $v_*$ is a homogeneous harmonic map with
\begin{equation} \label{u*v*dist}
d_*(u_*(x_0), u_*(x_1))=\delta(v_*(x_0),v_*(x_1)), \ \ \forall x_0, x_1 \in \D.
\end{equation}
   Moreover, the energy density function and the directional energy density functions of $u_{\sigma_j}$ converge weakly to those of $v_*$.
\end{lemma}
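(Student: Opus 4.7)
The crux of the lemma is the isometry identity (\ref{u*v*dist}); once it is in hand, the remaining conclusions follow cleanly. Indeed, combining (\ref{u*v*dist}) with the hypothesized pullback convergence of $\{v_{\sigma_j}\}$ to $v_*$ shows that the pullback pseudodistances of $\{u_{\sigma_j}\}$ converge to those of $v_*$, so $\{u_{\sigma_j}\}$ converges to $v_*$ locally uniformly in the pullback sense. By Definition~\ref{tm} and the structure theorem recalled in the subsection on tangent maps (and proved in Appendix~\ref{App_order}), every tangent map of $u$ is a degree $\alpha=\mathrm{ord}^u(p_0)$ homogeneous harmonic map, so $v_*$ inherits both the homogeneity and the harmonicity through this identification. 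The weak convergence of the energy density and directional energy density functions then follows from the uniform local Lipschitz bound supplied by Theorem~\ref{lip}, together with the standard Korevaar--Schoen compactness framework applied to pullback-convergent sequences of harmonic maps.

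To establish (\ref{u*v*dist}) I would prove matching upper and lower bounds for $\lim_j d_{\sigma_j}(u_{\sigma_j}(x_0),u_{\sigma_j}(x_1))$. The lower bound is immediate from the non-expansiveness of $\log_\sigma$: the CAT($\kappa$) inequality $\angle\le \widetilde\angle$ at $q_0$ yields $\delta(v_{\sigma_j}(x_0),v_{\sigma_j}(x_1))\le d_{\sigma_j}(u_{\sigma_j}(x_0),u_{\sigma_j}(x_1))$ for every $j$, and this survives passage to the limit. For the matching upper bound I would apply the spherical law of cosines in the comparison triangle for $q_0,u_{\sigma_j}(x_0),u_{\sigma_j}(x_1)$ inside $\Sp^2_{\mu_{\sigma_j}^2 \kappa}$ and track three ingredients as $j\to\infty$: (a) the rescaled curvature $\mu_{\sigma_j}^2\kappa\to 0$, since $\mu_{\sigma_j}$ decays like $\sigma_j^{\alpha}$ with $\alpha\ge 1$, so the spherical cosine rule degenerates to the Euclidean one; (b) the radial coordinates $d_{\sigma_j}(q_0,u_{\sigma_j}(x_i))=\delta(\mathcal O,v_{\sigma_j}(x_i))$, which converge to $\delta(\mathcal O,v_*(x_i))$ for $i=0,1$ by the given convergence of $\{v_{\sigma_j}\}$; and (c) the comparison angle $\widetilde\angle^{\sigma_j}_{q_0}(u_{\sigma_j}(x_0),u_{\sigma_j}(x_1))$, which I claim converges to the angular distance $\Theta([\eta_0],[\eta_1])$ between the limit directions $[\eta_i]\in\EE_{q_0}$ that determine $v_*(x_i)$. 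Feeding (a)--(c) into the limiting Euclidean law of cosines reproduces precisely the cone formula for $\delta^2(v_*(x_0),v_*(x_1))$ in $T_{q_0}X$, yielding the sought upper bound.

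The principal obstacle is the angle convergence (c). The easy direction $\liminf_j \widetilde\angle^{\sigma_j}_{q_0}\ge \Theta([\eta_0],[\eta_1])$ follows from the CAT($\mu_{\sigma_j}^2\kappa$) inequality $\angle(\gamma^{\sigma_j}_0,\gamma^{\sigma_j}_1)\le \widetilde\angle^{\sigma_j}_{q_0}$ together with the convergence of the angular components $[\gamma^{\sigma_j}_i]\to [\eta_i]$ inherent in the convergence of $\{v_{\sigma_j}\}$ to $v_*$. For the matching $\limsup$, I would invoke both hypotheses of the lemma: by geodesic extendability~(ii), extend each geodesic $\gamma^{\sigma_j}_i$ from $q_0$ to $u(\sigma_j x_i)$ to a full geodesic $\hat\gamma^{\sigma_j}_i$ terminating on $\partial\BB$; by compactness~(i), extract subsequential limit geodesics $\hat\eta_i$ whose initial directions must coincide with $[\eta_i]$. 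For a fixed auxiliary parameter $s>0$, the angle monotonicity (\ref{anglemonotone}) together with the continuity of the comparison angle in its endpoints (valid away from $q_0$) gives $\limsup_j \widetilde\angle^{\sigma_j}_{q_0}(u_{\sigma_j}(x_0),u_{\sigma_j}(x_1))\le \widetilde\angle_{q_0}(\hat\eta_0(s),\hat\eta_1(s))$; sending $s\to 0$ collapses the right-hand side to $\angle(\hat\eta_0,\hat\eta_1)=\Theta([\eta_0],[\eta_1])$ by another application of angle monotonicity. This two-parameter comparison analysis is precisely where hypotheses (i) and (ii), and hence the CAT($\kappa$) manifold assumption of Theorem~\ref{tangent_maps_are_cones}, are genuinely used.
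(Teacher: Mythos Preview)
Your argument for the isometry identity~(\ref{u*v*dist}) is correct and follows essentially the same geometric strategy as the paper: extend the geodesics from $q_0$ through $u(\sigma_j x_i)$ out to $\partial\BB$ using hypothesis~(ii), pass to a subsequential limit on $\partial\BB$ using the compactness hypothesis~(i), and then exploit that along a \emph{fixed} limiting geodesic the comparison angle (which is scale-invariant, i.e.\ the same whether computed in $(X,d)$ with model $\Sp^2_\kappa$ or in $(X,d_{\sigma_j})$ with model $\Sp^2_{\mu_{\sigma_j}^2\kappa}$) converges to the Alexandrov angle as the triangle shrinks.  The paper packages this slightly differently: rather than bounding the comparison angle from above and below, it introduces auxiliary points $p_{k,j}$ on the fixed limit geodesic $\gamma_k$ at $d_{\sigma_j}$-distance $l_{k,j}$ from $q_0$, shows $d_{\sigma_j}(u_{\sigma_j,i}(x_k),p_{k,j})\to 0$, and then computes $\lim_j d_{\sigma_j}(p_{0,j},p_{1,j})$ directly from the definition of $\angle(\gamma_0,\gamma_1)$.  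The geometric content is the same.  One minor remark: the paper carries out the argument for $x_0,x_1\in\Omega_i$ (all $i$) rather than just for $x_0,x_1\in\D$, which is what ``convergence in the pullback sense'' literally requires; your version on $\D$ does suffice, since an isometry between $u_*(\D)\subset X_*$ and $v_*(\D)\subset T_{q_0}X$ extends uniquely to their geodesic convex hulls in these NPC spaces, but you should say so.

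There is, however, a genuine gap in your treatment of the final assertion on weak convergence of the (directional) energy densities.  You write that this ``follows from the uniform local Lipschitz bound \ldots\ together with the standard Korevaar--Schoen compactness framework applied to pullback-convergent sequences of harmonic maps'', but that framework (specifically \cite[Theorem~3.11]{korevaar-schoen2}) is formulated for sequences of maps into NPC targets, whereas the blow-ups $u_{\sigma_j}$ map into the $\mathrm{CAT}(\mu_{\sigma_j}^2\kappa)$ spaces $(X,d_{\sigma_j})$, which are not NPC for $\kappa>0$.  The paper closes this gap by a device you omit: it lifts each $u_{\sigma_j}$ to $\bar u_{\sigma_j}=\iota\circ u_{\sigma_j}$ into the metric cone $(\CC(X),D_{\sigma_j})$, which \emph{is} NPC; observes that $\bar u_{\sigma_j}$ has the same energy and directional energy densities as $u_{\sigma_j}$, is within $\epsilon_j\to 0$ of energy minimizing, and still pullback-converges to $v_*$; and only then applies \cite[Theorem~3.11]{korevaar-schoen2}.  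This cone-lifting step is not a formality and should be supplied (or at least cited from \cite[Proposition~7.5]{BFHMSZ}, where the same maneuver appears).
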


\begin{proof}
Rescaling if necessary, we can assume $(X,d)$ is a locally CAT(1) manifold. Fix $i$ and let  $x_0, x_1 \in \Omega_i$.  Throughout this proof, $k=0$ or $k=1$.   For $\sigma_j>0$ sufficiently small, $u(\D_{\sigma_j}) \subset \BB$.   By condition (ii), there exists a geodesic  $\gamma_{k,j}$  from $q_0$ to a point $\hat{p}_{k,j} \in \partial \BB$ containing the point $u_{\sigma_j,i}(x_k)$. Set 
\[
l_{k,j}:=d_{\sigma_j}(u_{\sigma_j,i}(x_k),q_0).
\]
Thus, 
\[
v_{\sigma_j,i}(x_k):=\log_{\sigma_j} \circ u_{\sigma_j,i}(x_k) =(l_{k,j},[\gamma_{k,j}]).
\]
By condition (i),  $\partial \BB$ is compact. Thus, by taking a subsequence if necessary, we can assume 
\begin{equation} \label{catinthehat1}
\mbox{$\hat{p}_{k,j}$ converges to $\hat{p}_k$ as $j \rightarrow \infty$}.
\end{equation}
Let $\gamma_k$ be the geodesic from $q_0$ to $\hat{p}_k$.  For each $j$, consider $\gamma_k$ as a geodesic in $(X,d_{\sigma_j})$  and let $p_{k,j}$  be the point on $\gamma_k$ satisfying
 \[
d_{\sigma_j}(p_{k,j},q_0)=l_{k,j}.
\]  
Since  $(X,d_{\sigma_j})$ is a CAT$(\mu_{\sigma_j}^2)$ space, we use a comparison triangle in the sphere ${\mathbb S}^2_{\mu_{\sigma_j}^2}$ with Gauss curvature $\mu_{\sigma_j}^2$ to define comparison angles.  More specifically,  $\widetilde{\angle}^{(\mu_{\sigma_j})}_{q_0} (p,q)$ is the angle  at $\tilde{q}_0$ of the comparison triangle $\triangle \tilde{q}_0 \tilde{p} \tilde{q}$  in the sphere ${\mathbb S}^2_{\mu_{\sigma_j}^2}$.  By the definition of angles and comparison angles, we have
\begin{equation} \label{2ineq1}
\Theta([\gamma_k],[\gamma_{k,j}]) \leq \widetilde{\angle}^{(\mu_{\sigma_j})}_{q_0} (u_{\sigma_j,i}(x_k),p_{k,j}) \leq \widetilde{\angle}^{{(\mu_{\sigma_j})}}_{q_0}(\hat{p}_{k,j},\hat{p}_{k}).
\end{equation}
From (\ref{catinthehat1}) and  (\ref{2ineq1}), we conclude
\[
\lim_{\sigma_j \rightarrow 0}\delta(v_{\sigma_j,i}(x_k), (l_{k,j},[\gamma_{k}])  )= \lim_{\sigma_j \rightarrow 0} \delta((l_{k,j},[\gamma_{k,j}]),(l_{k,j},[\gamma_{k}])) =0.
\]
Furthermore,  
\[
\lim_{j \rightarrow \infty} l_{k,j} =d_*(u_{*,i}(x_k),u_{*,i}(0))=:l_k,
\]  
and therefore
\[
\lim_{\sigma_j \rightarrow 0}\delta(v_{\sigma_j,i}(x_k), (l_k,[\gamma_{k}])) =\lim_{\sigma_j \rightarrow 0}\delta(v_{\sigma_j,i}(x_k), (l_{k,j},[\gamma_{k}])) =0.
\]
By the definition of $v_{*,i}$, we thus have that 
\[
v_{*,i}(x_k)=(l_k,[\gamma_{k}]).
\]
From (\ref{catinthehat1}) and the second  inequality in (\ref{2ineq1}), we conclude that
\[
\lim_{\sigma_j\rightarrow 0} d_{\sigma_j}(u_{\sigma_j,i}(x_k),p_{k,j}) =0.
\]
Thus,
\begin{equation} \label{moz1}
d_*(u_{*,i}(x_0),u_{*,i}(x_1))=\lim_{\sigma_j\rightarrow 0}  d_{\sigma_j}(u_{\sigma_j,i}(x_0),u_{\sigma_j,i}(x_1)) =\lim_{\sigma_j \rightarrow 0} d_{\sigma_j}(p_{0,j},p_{1,j}).
\end{equation}
Since $p_{0,j} \in \gamma_0$ and $p_{1,j} \in \gamma_{1}$, the definition of angles implies 
\[
\lim_{\sigma_j \rightarrow 0} \widetilde{\angle}^{(\mu_{\sigma_j})}_{q_0} (p_{0,j},p_{1,j}) = \angle (\gamma_0,\gamma_{1}).
\]
Thus,
\begin{equation} \label{art2}
\lim_{\sigma_j \rightarrow 0} d_{\sigma_j}(p_{0,j},p_{1,j}) =\delta((l_0,[\gamma_0]), (l_1,[\gamma_{1}])) = \delta(v_{*,i}(x_0), v_{*,i}(x_{1})). 
\end{equation}
Combining (\ref{moz1}), (\ref{art2}), we obtain
\begin{eqnarray*}
d_*(u_{*,i}(x_0),u_{*,i}(x_1))
& = & 
\delta(v_{*,i}(x_0), v_{*,i}(x_{1})).
\end{eqnarray*}
In particular, this implies that $u_{\sigma_j}$ converges locally uniformly in the pullback sense to $v_*$ and that $v_*$ is homogeneous.

To complete the proof, consider the metric cone $\CC(X)$ over $X$.  More precisely, $(\CC(X),D)$ is an NPC space given by 
\[
\CC(X)= [0,\infty) \times X/\sim,
\]
where $\sim$ identifies all points of the form $(0,p)$ as the vertex ${\mathcal O}$, along with the distance function $D$ given by
\[
D^{{ 2}}((\rho_1, q_1), (\rho_2,q_2)) = \rho_1^2+\rho_2^2-2\rho_1\rho_2 \cos \min \{\pi, d(q_1,q_2)\}.
\]
Furthermore, define the rescaled distance function on $\CC(X)$ by
\[
D_\sigma= \mu_\sigma^{-1}D.
\]
Define the embedding of $X$ into $\CC(X)$ by
\[
\iota: X \hookrightarrow \CC(X), \ \ \iota(q)=(1,q).
\]
The \emph{lift} of the blow up map to $\CC(X)$ is the map   defined by
\[
\bar u_{\sigma_j}:\D \rightarrow (\CC(X),D_{\sigma_j}), \ \ \bar u_{\sigma_j}=\iota \circ u_{\sigma_j}.
\] 
The lift  $\bar u_{\sigma_j}$   has the same energy density and directional energy density functions as those of $u_{\sigma_j}$ and  is  within $\epsilon_j \rightarrow 0$ of minimizing.  Furthermore,  the sequence $\{\bar u_{\sigma_j}\}$ converges locally uniformly in the pullback sense  to $u_*$, and hence to $v_*$ (cf.~\cite[proof of Proposition 7.5]{BFHMSZ}).   
Therefore, by 
 \cite[Theorem 3.11]{korevaar-schoen2}, $v_*$ is harmonic and the
 energy density function and directional energy density functions of $u_{\sigma_j}:\D \rightarrow (X,d_{\sigma_j})$ converge to those of $v_*$.   
\end{proof}

\section{Non-degenerate harmonic maps}
\label{sec:Non-degenerate}
We will now restrict to harmonic maps which satisfy a non-degeneracy condition. Non-degeneracy  generalizes the notion of a full rank differentiable map between manifolds.  
We will show that non-degenerate harmonic maps between surfaces have enough structure to develop a degree theory and exploit some classical results.

This section contains the  proofs of  Theorem~\ref{Main1} (end of Subsection 4.1) and Theorem~\ref{sphere_cor} (end of Subsection 4.2).

\begin{definition}\label{nondegen_def}
We say a harmonic map $u:\Sigma \rightarrow (S,d)$ from a Riemann surface into a locally CAT($\kappa$) manifold  is  \emph{non-degenerate} if any tangent map $u_*:\D \rightarrow (X_*,d_*)$ of $u$ at $p_0 \in \Sigma$ has the property that
\begin{equation}\label{nondeg_eq}
d_*(u_*(z),u_*(0))>0, \ \forall z \in \D \backslash \{0\}.
\end{equation}
\end{definition}

\subsection{Non-degenerate harmonic maps between surfaces are branched covers}\label{locdeg}

\begin{definition} \label{branchset}
The branch set $\BB_u$ of a harmonic map $u:\Sigma \rightarrow (S,d)$ is the set of points $p$ such that $u$ is not a local homeomorphism at $p$.
\end{definition}

We will show that non-degenerate harmonic maps are open and discrete. 
 V\"ais\"al\"a \cite{vaisala} demonstrated the set $\BB_u$ is of topological co-dimension 2 for open and discrete maps between topological manifolds.  Using the order function, we can then  improve this assertion to show that  $\BB_u$ is a discrete set.

 Discreteness  follows immediately from the definition of non-degeneracy and the existence of a tangent map. (Note the discreteness result does not require that the target be a manifold.)
\begin{lemma} \label{discrete}
If $u:\Sigma \rightarrow (X,d)$ is a non-degenerate harmonic map from a Riemann surface into a locally CAT($\kappa$) space, then $u$ is discrete.
\end{lemma}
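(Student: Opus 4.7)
The plan is to argue by contradiction using a blow-up along the accumulating fiber. Suppose $u$ is not discrete; then there exist $p_0 \in \Sigma$ and a sequence $p_j \to p_0$ with $p_j \neq p_0$ and $u(p_j) = u(p_0)$. In a holomorphic disk $\D$ centered at $p_0$ (identify $p_0$ with $0$), this yields points $z_j \in \D \setminus \{0\}$ with $z_j \to 0$ and $u(z_j) = u(0)$. The idea is to rescale at the rate $|z_j|$, extract a tangent map $u_*$, and produce a point $x_0 \in \D \setminus \{0\}$ with $d_*(u_*(x_0), u_*(0)) = 0$, directly contradicting Definition \ref{nondegen_def}.

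Set $\sigma_j := 2|z_j|$ and $x_j := z_j/\sigma_j \in \partial \D_{1/2}$. Passing to a subsequence, $x_j \to x_0$ for some $x_0$ with $|x_0| = 1/2$. The construction recalled in Section \ref{TC_section} (after a further subsequence) produces a tangent map $u_* : \D \to (X_*, d_*)$ to which the blow-ups $u_{\sigma_j}$ converge locally uniformly in the pullback sense. Because $u(z_j) = u(0)$, for each $j$
\[
\rho_{\sigma_j, 0}(x_j, 0) = d_{\sigma_j}\bigl(u_{\sigma_j}(x_j), u_{\sigma_j}(0)\bigr) = \mu_{\sigma_j}^{-1} d\bigl(u(z_j), u(0)\bigr) = 0.
\]

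To close the argument, I would combine the uniform convergence $\rho_{\sigma_j, 0} \to \rho_{*, 0}$ on the compact set $\overline{\D_{1/2}} \times \{0\} \subset \Omega_0 \times \Omega_0$ with continuity of $z \mapsto \rho_{*, 0}(z, 0) = d_*(u_*(z), u_*(0))$---itself a consequence of the Lipschitz regularity of the harmonic tangent map $u_*$ supplied by Theorem \ref{lip}---to pass to the limit at the moving points:
\[
d_*\bigl(u_*(x_0), u_*(0)\bigr) = \rho_{*, 0}(x_0, 0) = \lim_{j \to \infty} \rho_{\sigma_j, 0}(x_j, 0) = 0.
\]
Since $x_0 \neq 0$, this contradicts the non-degeneracy condition. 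The only technical subtlety---essentially the only nontrivial step---is this exchange of limits at the moving point $(x_j, 0)$, which is handled by the standard uniform-on-compacts plus continuity argument; no serious obstacle arises because $u_*$ is a harmonic map into an NPC space, hence continuous.
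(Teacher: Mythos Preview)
Your proof is correct and follows essentially the same approach as the paper: argue by contradiction, rescale with $\sigma_j = 2|z_j|$, extract a subsequence so that $z_j/\sigma_j \to x_0 \in \partial \D_{1/2}$ and the blow-ups converge in the pullback sense to a tangent map $u_*$, then use $u_{\sigma_j}(x_j)=u_{\sigma_j}(0)$ to force $d_*(u_*(x_0),u_*(0))=0$, contradicting non-degeneracy. The paper simply asserts the limit step, whereas you spell out the uniform-convergence-plus-continuity justification; this is a harmless elaboration, not a difference in strategy.
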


\begin{proof}
On the contrary, assume that $u$ is not discrete; i.e.~there exist $q_0 \in X$ and a sequence $p_j \rightarrow p_0$ such that $u(p_j)=q_0$.  Let $\D$ be a holomorphic disk centered at $p_0$ and let $z_j \in \D$ correspond to $p_j$.  Let $\sigma_j=2|z_j|$  and consider a  sequence of blow up  maps $\{u_{\sigma_j}\}$ of $u\big|_\D$.  
By taking a subsequence if necessary, we can assume that the sequence  $\{u_{\sigma_j}\}$ converges locally uniformly in the pullback sense to a  tangent map $u_*$ and that the sequence $
\{\zeta_j= \frac{z_j}{\sigma_j}  \} \subset  \partial \D_{\frac{1}{2}}(0)$ converges to $\zeta_0 \in \partial \D_{\frac{1}{2}}(0)$.  
Since $u_{\sigma_j}(\zeta_j)=u_{\sigma_j}(0)$,  we have $u_*(\zeta_0)=u_*(0)$.  This contradicts the fact that $u$ is a non-degenerate map.
\end{proof}

To prove openness of $u$, we exploit the structure of the Alexandrov tangent cone $T_{q}S$ for a point $q$ in a  CAT($\kappa$) surface $(S,d)$.  Indeed,  
Proposition~\ref{coneovercurve} asserts that $T_{q}S$ is a cone over a finite length closed curve. 
Thus there exists an orientation preserving (with respect to the orientation of $T_qS$ inherited  from $S$)  isometry
\begin{equation} \label{Iq}
I_q:  T_{q}S \rightarrow ({\mathbb C}, ds^2)
\end{equation}
where 
\begin{equation} \label{localmetric}
ds^2 = \beta^2|z|^{2(\beta-1)}|dz|^2
\end{equation}
for a suitable constant $\beta \geq 1$.  The constant $\beta$  is determined by the cone angle of $T_qS$; indeed, the curvature measure of $ ({\mathbb C}, ds^2)$ is $2\pi(1-\beta) \delta_0$ where $\delta_0$ is a Dirac measure at the origin.  

Kuwert \cite[Lemma 3]{kuwert} classified all homogeneous, harmonic maps from $\C$ to $(\C, ds^2)$.   Accordingly, we have the following: 
\begin{itemize}
\item 
If  an Alexandrov tangent map $v_*$ satisfies (\ref{nondeg_eq}), then up to orientation and rotation (and with $\alpha=\mathrm{ord}^u(p)$)

\begin{equation}\label{tangent_map_equation}
I_{u(p)} \circ v_*(z)= \left\{\begin{array}{ll}
c z^{\alpha/\beta}  & \text{if } k=0,\\
c \left(\frac 12\left(k^{-\frac 12}z^\alpha + k^{\frac 12} \bar z^\alpha\right)\right)^{1/\beta} & \text{if } 0<k<1.
\end{array}\right.
\end{equation}
with
\begin{equation} \label{alphabeta}
\alpha/\beta \in \N.
 \end{equation}

\item If $v_*$ does not satisfy (\ref{nondeg_eq}), then there exist a finite number of disjoint sectors of $\D$ such that $v_*$ maps each sector to a geodesic ray.  In this case, $k=1$.
\end{itemize}
Here $k=k_u(p) \in [0,1]$ is the \emph{stretch} of $u$ at $p$ and the constant $c$ in (\ref{tangent_map_equation}) is determined by the normalization (cf.~(\ref{bmnorm}) and $W^{1,2}$-trace theory \cite[Theorem 1.12.2]{korevaar-schoen1})
\begin{equation}\label{normalizing_eq}
\int_{\partial \D} \delta^2(v_*,\OO) \, d\theta =1.
\end{equation}
\begin{remark}\label{stretch_defn}
If  any tangent map of $u$ at $p$ satisfies (\ref{nondeg_eq}), then all tangent maps of $u$ at $p$ satisfy (\ref{nondeg_eq}). To see this, first note that Lemma~\ref{v*} implies that a tangent map $u_*$ satisfies \eqref{nondeg_eq} if and only if its corresponding Alexandrov tangent map $v_*$ does. From the characterization given above, $v_*$ satisfies (\ref{nondeg_eq}) if and only if $k_u(p)\neq 1$. Finally, the value of $k$ is independent of the choice of  tangent map (cf. \cite[Lemma 5]{kuwert}); that is the \emph{stretch function} is a well defined function $k_u:\Sigma \rightarrow [0,1]$. Therefore, as soon as one tangent map $u_*$ satisfies \eqref{nondeg_eq}, $k_u(p)\neq 1$ and thus all tangent maps satisfy \eqref{nondeg_eq}.   
\end{remark}

\begin{proposition} \label{omt}
A non-degenerate harmonic map $u:\Sigma \rightarrow (S,d)$ from a Riemann surface into a CAT($\kappa$) surface is an open map.
\end{proposition}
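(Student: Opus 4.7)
My plan is to prove openness via a winding-number / local-degree argument. Fix $p_0\in\Sigma$ and set $q_0:=u(p_0)$. Since $S$ is a CAT($\kappa$) manifold, I first choose $R>0$ small enough that the geodesic ball $\BB^S_R(q_0)$ is homeomorphic to a disk and that $\log_{q_0}:\BB^S_R(q_0)\to T_{q_0}S$ is an orientation-preserving homeomorphism onto its image (using the uniqueness of geodesics, continuity of angles, and geodesic extendability of Remark~\ref{3properties}, together with the cone-over-a-simple-closed-curve structure of Proposition~\ref{coneovercurve}). By Lemma~\ref{discrete}, I then shrink a holomorphic disk $\D$ centered at $p_0$ so that $u(\D)\subset\BB^S_R(q_0)$ and $u^{-1}(q_0)\cap\overline{\D}=\{0\}$. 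The goal reduces to showing that $q_0$ lies in the interior of $u(\D_\rho)$ for arbitrarily small $\rho>0$.

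Next I would analyze the Alexandrov tangent map. By Theorem~\ref{tangent_maps_are_cones} (through Lemma~\ref{v*}), pick a sequence $\sigma_j\to 0$ such that $v_{\sigma_j}=\log_{\sigma_j}\circ u_{\sigma_j}$ converges locally uniformly on compact sets to an Alexandrov tangent map $v_*:\D\to T_{q_0}S$, itself a degree $\alpha=\mathrm{ord}^u(p_0)$ homogeneous harmonic map. By non-degeneracy and classification \eqref{tangent_map_equation}, the composition $I_{q_0}\circ v_*$ equals either $cz^{\alpha/\beta}$ or $c\bigl(\tfrac{1}{2}(k^{-1/2}z^\alpha+k^{1/2}\bar z^\alpha)\bigr)^{1/\beta}$, with $\alpha/\beta\in\N$ by \eqref{alphabeta}. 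In either case $I_{q_0}\circ v_*|_{\partial\D_\rho}$ is a closed curve in $\C\setminus\{0\}$ with winding number $\alpha/\beta\ge 1$ around the origin: in the first case $z\mapsto z^{\alpha/\beta}$ wraps the circle $\alpha/\beta$ times; in the second, because $k^{-1/2}\ne k^{1/2}$, the inner expression $k^{-1/2}z^\alpha+k^{1/2}\bar z^\alpha$ traces a genuine ellipse $\alpha$ times, and the outer $(\cdot)^{1/\beta}$ yields total winding number $\alpha/\beta$.

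Then I would transfer this winding number from $v_*$ back to $u$. Because $v_*|_{\partial\D_\rho}$ avoids $\OO$ (by non-degeneracy) and the convergence $v_{\sigma_j}\to v_*$ is uniform on $\partial\D_\rho$, for $j$ large the loops $v_{\sigma_j}|_{\partial\D_\rho}$ and $v_*|_{\partial\D_\rho}$ are homotopic in $T_{q_0}S\setminus\{\OO\}$, hence carry the same winding number $\alpha/\beta$ around $\OO$. Since $\log_{q_0}$ is an orientation-preserving homeomorphism on $\BB^S_R(q_0)$, and the definition of $\log$ depends only on the conformal scaling of the target distance, the winding number of $u|_{\partial\D_{\sigma_j\rho}}$ around $q_0$ in $\BB^S_R(q_0)$ equals $\alpha/\beta\ge 1$. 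Classical planar degree theory applied in the topological disk $\BB^S_R(q_0)$ then implies that every $q$ sufficiently close to $q_0$ (so that $q\notin u(\partial\D_{\sigma_j\rho})$) has the same nonzero winding number $\alpha/\beta$, and hence lies in $u(\D_{\sigma_j\rho})$. This places $q_0$ in the interior of $u(\D_{\sigma_j\rho})$, and since $\sigma_j\rho$ can be made arbitrarily small, $u$ is open at $p_0$.

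The main obstacle is the transfer step in the third paragraph. Two ingredients must be handled carefully: first, that $\log_{q_0}$ is a genuine homeomorphism onto its image on a small ball (so that winding numbers in $\BB^S_R(q_0)\setminus\{q_0\}$ correspond bijectively to winding numbers in $\log_{q_0}(\BB^S_R(q_0))\setminus\{\OO\}$); and second, that the locally uniform pullback convergence of Lemma~\ref{v*} upgrades to uniform convergence on $\partial\D_\rho$. The latter is obtained by applying Arzel\`a--Ascoli to the uniformly Lipschitz family $\{v_{\sigma_j}\}$ (using Theorem~\ref{lip} and the fact that $\log_\sigma$ is distance non-increasing), valid because $T_{q_0}S$ is locally compact when $S$ is a CAT($\kappa$) manifold. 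Once these are in place, the topological argument via winding numbers is essentially forced by the explicit form of $v_*$ given by Kuwert's classification.
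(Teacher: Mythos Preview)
Your approach is essentially the same as the paper's, but you supply the degree-theoretic details that the paper leaves implicit. The paper's proof is very brief: it observes from the Kuwert classification \eqref{tangent_map_equation} that $v_*(\D_{1/2})$ is open and hence contains a ball $\BB_\rho^{T_{q_0}S}(\OO)$, and then asserts without further argument that for $\sigma_j$ small, $\BB_\rho^{(S,d_{\sigma_j})}(q_0)\subset u_{\sigma_j}(\D)$, i.e.\ $\BB_{\sigma_j\rho}^{(S,d)}(q_0)\subset u(\D_{\sigma_j})$. Your winding-number transfer is exactly what justifies that ``hence.''

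One point to tighten: you assert that $\log_{q_0}$ is an orientation-preserving \emph{homeomorphism} onto its image. The paper does not claim this; in the proof of Theorem~\ref{itsbranched} it uses only that $\log_{q_0}:u_\sigma(\D)\to\log_{q_0}(u_\sigma(\D))$ is a \emph{homotopy equivalence} with $\log_{q_0}^{-1}(\OO)=\{q_0\}$. In fact, the analysis in Appendix~\ref{appB} shows only that the projection $\LL:\partial\BB_\epsilon(q_0)\to\EE_{q_0}$ is monotone and surjective, not injective, so $\log_{q_0}$ need not be a homeomorphism in general. Fortunately, monotone surjections between circles have degree $\pm1$, so $\log_{q_0}$ still induces an isomorphism on $H_1$ of the punctured neighborhoods, and your winding-number transfer goes through unchanged. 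Since you already flagged this step as the main obstacle requiring care, this is a refinement rather than a gap: replace ``homeomorphism'' by ``homotopy equivalence (with $\log_{q_0}^{-1}(\OO)=\{q_0\}$)'' and the argument is complete.
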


\begin{proof}Let $\UU \subset \Sigma$ be an open set, $p_0 \in \UU$, $q_0 =u(p_0)$ and $\D \subset \UU$ be a holomorphic disk centered at $p_0$.
  Let $\{u_{\sigma_j}\}$ be a sequence of blow up maps of $u\big|_\D$   converging locally uniformly in the pullback sense to  $v_*:\D \rightarrow T_{q_0}S$.  By (\ref{tangent_map_equation}), $v_*(\D_\frac{1}{2})$ is an open set and hence    $ \BB^{T_{q_0}S}_\rho(\mathcal O) \subset v_*(\D_{\frac{1}{2}})$ for some $\rho>0$.  Thus, for sufficiently small $\sigma_j$, the geodesic disk $\BB_{\rho}^{(S,d_{\sigma_j})}(q_0)$ is contained in $u_{\sigma_j}(\D)$.  
Equivalently, $\BB_{\sigma_j \rho}^{(S,d)}(q_0)\subset u(\D_{\sigma_j})\subset u(\mathcal U)$.
\end{proof}

The proof that $u$ is a branched cover requires that points with high order are discrete. The proof of this discreteness does not require the Alexandrov tangent map be harmonic, and is thus true for a larger class of maps. In the following lemma, we do not require that $u$ be non-degenerate or $(X,d)$ be a manifold.
\begin{lemma}  \label{A}
Let $u:\Sigma \rightarrow (X,d)$ be a harmonic map from a Riemann surface into a locally CAT($\kappa$) space. Then  
$\Aa:=\{z \in \Sigma:  \alpha_u(z)= \mathrm{ord}^u(z) \geq  2\}$
is a discrete set.
\end{lemma}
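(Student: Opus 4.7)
The plan is a blow-up argument based on the upper semicontinuity of the order function and the structure of homogeneous tangent maps. First, I would observe that $z \mapsto \alpha_u(z)$ is upper semicontinuous: for each fixed $\sigma > 0$ the ratio
\[
I_u(z,\sigma) = \frac{\sigma\int_{\D_\sigma(z)}|\nabla u|^2\,dxdy}{\int_{\partial \D_\sigma(z)} d^2(u,u(z))\,d\theta}
\]
appearing in (\ref{orderlimitexists}) depends continuously on $z$, and $\alpha_u(z)$ is its monotone limit (equivalently, infimum) as $\sigma \to 0$; hence $\alpha_u$ is an infimum of continuous functions and $\Aa$ is closed.

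Suppose for contradiction that $\Aa$ has an accumulation point $z_0$; then $z_0 \in \Aa$ and there exist $z_j \to z_0$ with $z_j \in \Aa \setminus \{z_0\}$. Let $\D$ be a holomorphic disk centered at $z_0$ and set $\sigma_j = 2|z_j - z_0|$. Passing to a subsequence, the blow-up maps $u_{\sigma_j}:\D \to (X,d_{\sigma_j})$ converge locally uniformly in the pullback sense to a tangent map $u_*:\D \to X_*$, which by (\ref{hg}) is a homogeneous harmonic map of degree $\alpha_0 = \alpha_u(z_0) \geq 2$ into an NPC space. The normalized points $\zeta_j = (z_j - z_0)/\sigma_j$ satisfy $|\zeta_j| = 1/2$; after a further extraction, $\zeta_j \to \zeta_\infty$ with $|\zeta_\infty| = 1/2$.

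Next, I would transfer the order hypothesis through the blow-up. The order functional is invariant under the rescaling $d \mapsto d_\sigma$ of the target, so $\alpha_{u_{\sigma_j}}(\zeta_j) = \alpha_u(z_j) \geq 2$. Because pullback convergence of $u_{\sigma_j}$ to $u_*$ entails the convergence of the boundary integrals $\int_{\partial \D_\sigma(\zeta)} d_{\sigma_j}^2(u_{\sigma_j}, u_{\sigma_j}(\zeta))\,d\theta$, together with the convergence of energy densities supplied by Lemma~\ref{v*} (applied at the basepoint $\zeta_\infty$ rather than $0$), one can pass to the limit in $I_{u_{\sigma_j}}(\zeta_j,\sigma)$ to conclude $\alpha_{u_*}(\zeta_\infty) \geq 2$.

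The contradiction will come from the fact that a degree $\alpha_0$ homogeneous harmonic map $u_*$ has $\alpha_{u_*}(\zeta) = 1$ at every $\zeta \neq 0$. Indeed, by (\ref{hg}) the map $r \mapsto u_*(r\zeta_\infty/|\zeta_\infty|)$ parametrizes a geodesic in $X_*$ with nonzero speed at $r = |\zeta_\infty|$, so $u_*$ cannot be infinitesimally constant at $\zeta_\infty$; a direct computation of the ratio $I_{u_*}(\zeta_\infty,\sigma)$, recentered at $\zeta_\infty$, then yields the limit $1$. The main obstacle is the upper semicontinuity transfer in the third paragraph, since it requires identifying precisely which integrals survive the pullback limit when the basepoint itself is moving with $j$; the order computation for the homogeneous target in the final step is elementary but must be carried out carefully enough to accommodate a general NPC target $X_*$.
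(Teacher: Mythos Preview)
Your approach is essentially the same as the paper's: blow up at an accumulation point of $\Aa$, pass to a tangent map $u_*$, transfer the order bound through the pullback limit, and contradict the fact that a homogeneous harmonic map has order $1$ away from the origin. The paper packages the third step as $\limsup_j \alpha_{u_{\sigma_j}}(\zeta_j) \leq \alpha_{u_*}(\zeta_0)=1$ (citing Lemmas~\ref{order_equal} and~\ref{order_upper}) and reads off the contradiction directly, while you phrase the same inequality as $\alpha_{u_*}(\zeta_\infty)\geq 2$ and then contradict $\alpha_{u_*}(\zeta_\infty)=1$; these are logically identical.

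One small correction: your invocation of Lemma~\ref{v*} for the convergence of energy densities is not quite right, since that lemma carries the extra hypotheses (local compactness and geodesic extendability) which are \emph{not} assumed in Lemma~\ref{A}. The energy convergence you need in the general locally CAT($\kappa$) setting is exactly what is established in Lemma~\ref{order_equal} via the lift $\bar u_{\sigma_j}$ into the NPC cone $\CC(X)$ and \cite[Theorem~3.11]{korevaar-schoen2}; cite that instead. Also, your preliminary paragraph showing $\Aa$ is closed is correct but unnecessary for the contradiction.
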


\begin{proof}
Suppose there exists $\{p_j\} \subset \Aa$ such that  $p_j \rightarrow p_0$. Let $\D$ be a holomorphic disk centered at $p_0$ and let $z_j \in \D$ correspond to $p_j$.  Let $\sigma_j=2|z_j|$  and consider a  sequence of blow up  maps $\{u_{\sigma_j}\}$ of $u$ at $0$.    
By taking a subsequence if necessary, we can assume that the sequence  $\{u_{\sigma_j}\}$ converges locally uniformly in the pullback sense to a  tangent map $u_*$ and that the sequence $
\{\zeta_j= \frac{z_j}{\sigma_j}  \} \subset  \partial \D_{\frac{1}{2}}(0)$ converges to $\zeta_0 \in \partial \D_{\frac{1}{2}}(0)$.  By Lemmas \ref{order_equal} and \ref{order_upper}, $\limsup_{j \rightarrow \infty} \alpha_{u_{\sigma_j}}(\zeta_j)  \leq \alpha_{u_*}(\zeta_0) =1$.   Thus, we have that $\alpha_{u_{\sigma_j}}(\zeta_j)<2$ for $j$ sufficiently large which in turn implies $\alpha_u(z_j)< 2$, a contradiction.
\end{proof}

\begin{theorem} \label{itsbranched}
If $u:\Sigma \rightarrow (S,d)$ is  a proper, non-degenerate harmonic map from a Riemann surface to an oriented locally CAT($\kappa$) surface, then  $u$ is a branched cover.
\end{theorem}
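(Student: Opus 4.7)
The plan is to combine the openness (Proposition~\ref{omt}) and discreteness (Lemma~\ref{discrete}) of $u$ with V\"ais\"al\"a's structure theorem~\cite{vaisala} for open discrete maps between topological $n$-manifolds: the branch set $\BB_u$ is of topological codimension at least two, hence of topological dimension zero in our two-dimensional setting, and at each $p \notin \BB_u$ the local degree $\mu(u,p)$ equals $1$. The main work is then to upgrade ``dimension zero'' to ``discrete,'' which I would accomplish by showing $\BB_u \subseteq \Aa = \{p : \alpha_u(p) \geq 2\}$. Since $\Aa$ is discrete by Lemma~\ref{A}, this gives discreteness of $\BB_u$.

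For the key inclusion I would take $p$ with $\alpha_u(p) = 1$ and show $p \notin \BB_u$. Let $v_*$ be an Alexandrov tangent map of $u$ at $p$; by Theorem~\ref{tangent_maps_are_cones}, $v_*$ is a homogeneous harmonic map of degree $\alpha = 1$ into $T_{u(p)}S$. Proposition~\ref{coneovercurve} and the identification~(\ref{Iq})--(\ref{localmetric}) present the target isometrically as $(\C, \beta^2 |z|^{2(\beta-1)}|dz|^2)$ with $\beta \geq 1$, and the integrality condition~(\ref{alphabeta}) combined with $\alpha = 1$ and $\beta \geq 1$ forces $\beta = 1$. Thus $T_{u(p)}S$ is isometric to Euclidean $\C$, and by non-degeneracy (Remark~\ref{stretch_defn} gives $k_u(p) \neq 1$) the classification~(\ref{tangent_map_equation}) makes $v_*$ either the conformal linear map $z \mapsto cz$ or the $\R$-linear map $z \mapsto c \cdot \tfrac{1}{2}(k^{-1/2} z + k^{1/2} \bar z)$. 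In either case $v_*$ is a global bi-Lipschitz homeomorphism of $\C$, so its local degree at the origin is $1$.

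Next I would transfer this to $u$ itself using the convergence $u_{\sigma_j} \to v_*$ in the pullback sense provided by Lemma~\ref{v*}. For open discrete maps, the local degree is stable under such convergence: leveraging the uniform closeness of the pullback pseudodistance of $u_{\sigma_j}$ to the bi-Lipschitz pullback pseudodistance of $v_*$ on compact subsets of $\D \times \D$, together with the openness and discreteness of each $u_{\sigma_j}$, I would argue that for all sufficiently large $j$ and for a generic $q$ slightly off the origin in $(X,d_{\sigma_j})$, the preimages $u_{\sigma_j}^{-1}(q) \cap \overline{\D_{1/2}}$ can be put in bijection with $v_*^{-1}(q) \cap \overline{\D_{1/2}}$, which is a singleton. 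Hence $\mu(u_{\sigma_j}, 0) = 1$; invariance of local degree under the scaling $u_{\sigma_j}(z) = u(\sigma_j z)$ then gives $\mu(u,p) = 1$, so $p \notin \BB_u$.

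Once $\BB_u \subseteq \Aa$ is established, $\BB_u$ is discrete, $u(\BB_u) \subset S$ is discrete (hence closed), and $u$ restricted to $\Sigma \setminus u^{-1}(u(\BB_u))$ is a proper local homeomorphism between $2$-manifolds, hence a covering onto $S \setminus u(\BB_u)$. This is precisely the branched covering conclusion. The main obstacle will be making the degree-stability step rigorous within the pullback-sense convergence framework: one has to control preimages of fixed target points under a sequence of maps whose targets are different rescaled metric spaces $(X,d_{\sigma_j})$, which is more delicate than the standard $C^0$-convergence setting, and will likely require combining the uniform bi-Lipschitz control on $v_*$ (crucially using $\beta = 1$) with the monotonicity properties encoded in openness and discreteness of the $u_{\sigma_j}$.
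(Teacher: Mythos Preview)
Your overall strategy---reduce to showing $\BB_u \subseteq \Aa$ via the tangent-map classification---matches the paper, and your observation that $\alpha_u(p)=1$ forces $\beta=1$ (so $v_*$ is a linear bi-Lipschitz homeomorphism of $\C$) is correct and pleasant. The gap is exactly the degree-stability step you flag at the end: pullback convergence of $u_{\sigma_j}$ to $v_*$ compares pullback pseudodistances, not maps into a fixed target, so there is no honest way to match preimage counts or read off a winding number along the sequence. Your sketch (``put preimages in bijection'') does not go through as stated, since uniform closeness of $d_{\sigma_j}(u_{\sigma_j}(x),u_{\sigma_j}(y))$ to $\delta(v_*(x),v_*(y))$ only rules out coincidences $u_{\sigma_j}(x)=u_{\sigma_j}(y)$ with $|x-y|$ bounded below, not small-scale failures of injectivity.

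The paper resolves this by inserting the logarithm map. Because $\log_{q_0}:\NN\to T_{q_0}S$ is a homotopy equivalence from a small ball onto its image with $\log_{q_0}^{-1}(\OO)=\{q_0\}$, the compositions $v_{\sigma_j}=\log_{\sigma_j}\circ u_{\sigma_j}:\D\to T_{q_0}S$ land in a \emph{fixed} target and converge \emph{uniformly} to $v_*$ (Lemma~\ref{v*}). The local degree $w_\#(p_0)$ of $u$ at $p_0$, defined as the winding number of $u|_{\partial\D_r}$ about $q_0$, therefore equals the winding number of $v_*|_{\partial\D_r}$ about $\OO$, which is $\alpha/\beta$ by the explicit formula~(\ref{tangent_map_equation}). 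This sidesteps your obstacle entirely and works for all $p_0$, not only order-one points.

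One further point: you implicitly assume that $|\mu(u,p)|=1$ implies $p\notin\BB_u$. In dimension two this follows from Stoilow's theorem, but the paper does not appeal to it. Instead it first establishes that $w_\#$ has constant sign on all of $\Sigma$ (using $\dim\BB_u=0$ from V\"ais\"al\"a, connectedness of $\Sigma\setminus\BB_u$, and degree additivity across $\BB_u$), and then uses the additivity formula $w_\#(p_0)=\sum_i w_\#(p_i)$ over preimages of a nearby value $q$ to deduce that $|w_\#(p_0)|=1$ forces $u^{-1}(q)$ to be a singleton near $p_0$, hence injectivity. You should either cite Stoilow explicitly or reproduce this sign-consistency argument.
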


\begin{proof}
Let  $\deg_K f$ be as defined in \cite[Definition VIII.4.2]{dold}.   Since $u$ is a discrete map, for any $p_0 \in \Sigma$ with $q_0=u(p_0)$, there exists a connected, simply connected neighborhood $U$ of $p_0$ such that    
\begin{equation} \label{onlyonept}
\{p_0\}=U \cap u^{-1}(q_0).
\end{equation}
Thus, $\deg_{q_0} u|_{V} = \deg_{q_0} u|_{U}$  for any neighborhood $V \subset U$ of $p_0$.
  Since $u$ is an open map,
\[
\Z =H_1(\Sp^1)=H_2(\bar \D, \partial \D) = H_2(U,U \backslash p_0) = H_2(u(U),u(U) \backslash q_0)
\]
and 
\[
\deg_{q_0} u|_{U} = (u|_{U})_{\#} (1)
\]
where 
\[
(u|_{U})_{\#}: H_2(U,U \backslash p_0) \rightarrow  H_2(u(U),u(U)\backslash q_0)
\]
 is the induced homomorphism of the local homology  groups. Thus,  
$\deg_{q_0} u|_{U}$
  is the (signed) winding number of the curve $u\circ \gamma$ around $q_0$ where $\gamma$ is a positively oriented parameterization of  $\partial \D$ where $\D$ is a conformal disk centered at $p_0$ compactly contained in $U$.  We denote
  \[
  w_{\#}(p_0):=\deg_{q_0} u|_{U} = (u|_{U})_{\#} (1).
  \]
The integer $w_{\#}(p_0)$ satisfies the following properties:

\begin{itemize}
\item[(i)]  \emph{$|w_{\#}(p_0) |$ is equal to $\alpha/\beta$ in (\ref{alphabeta}) for every tangent map of $u$ at $p_0$.}   Indeed, for $\sigma>0$ sufficiently small, the map $\log_{q_0}: u_\sigma(\D) \rightarrow \log_{q_0}(u_\sigma(\D)) \subset T_{q_0}S$ is a homotopy equivalence with $\log_{q_0}^{-1}(\OO)=\{q_0\}$ (cf.~Subsection~\ref{alex}).  Thus, the claim follows from  the uniform convergence of $v_{\sigma_i}=\log_{q_0}\circ u_{\sigma_i}$ to $v_*$  as asserted in  Lemma~\ref{v*}.
\item[(ii)]  \emph{Either $w_{\#}(p)=1$ for all $p \in \Sigma \backslash \BB_u$ or   $w_{\#}(p)=-1$ for all $p \in \Sigma \backslash \BB_u$.}  To see this, first note that 
  $|w_{\#}(p)|=1$ for all $p \in \Sigma \backslash \BB_u$  since $u$ is a local homeomorphism on $\Sigma \backslash \BB_u$.  But since  $\Sigma \backslash \BB_u$ is connected (because $\dim \BB_u=0$ by \cite[Theorem 5.4]{vaisala}), we conclude that  either $w_{\#}(p)=1$ for all $p \in \Sigma \backslash \BB_u$ or   $w_{\#}(p)=-1$ for all $p \in \Sigma \backslash \BB_u$.  
  \item[(iii)] \emph{Either $w_{\#}(p) >0$ for all $p_0 \in \Sigma$ or $w_{\#}(p)<0$ for all $p_0 \in \Sigma$.}  To see this, assume without the loss of generality that $w_{\#}(p)=1$ for all $p \in \Sigma \backslash \BB_u$.   For $p_0 \in \BB_u$, we  need to show $w_{\#}(p_0) >0$.  Let $U$ be as in (\ref{onlyonept}).   With $q_0=u(p_0)$, let 
  \begin{equation} \label{choiceB}
  B:=\BB_\epsilon(q_0) \mbox{ be such that } \bar B  \subset u(U) \mbox{ and } V:=u^{-1}(B).
  \end{equation}      
By \cite[Proposition VIII.4.4]{dold}, 
$\deg_{\bar B} u|_V=\deg_q u|_V$ for all $q \in \bar B$.
In particular, 
 \[
w_{\#}(p_0)= \deg_{q_0} u|_V  =\deg_{q} u|_V, \ \ \forall q \in B.
 \]
 Since $u^{-1}(B)$ is an open set and $\dim(\BB_u)=0$ (cf.~\cite[Theorem 5.4]{vaisala}), $B \backslash u(\BB_u) \neq \emptyset$.  
For $q \in B \backslash u(\BB_u)$, let  $(u|_U)^{-1}(q) :=\{p_1,\dots, p_k\}$ and $\{V_i\}_{i=1}^k$ be an open cover of $V$ such that  each $V_i$ contains exactly one element $p_i$ of $u^{-1}(q)$. Thus,  \cite[Proposition VIII.4.7]{dold} implies 
\begin{equation} \label{addition}
w_{\#}(p_0)  = \deg_{q} u|_V=\sum_{i=1}^k  \deg_{q}u|_{V_i} = \sum_{i=1}^k w_{\#}(p_i).
\end{equation}
Since $q \in B \backslash u(\BB_u)$, we have that $w_{\#}(p_i)=1$ and thus $w_{\#}(p_0) =k>0$.
 \end{itemize} 
 
 To show that $u$ is a branched cover, we need to show that $\BB_u$ is a discrete set and $u$ is an even covering away from $\BB_u$.  By \cite[Proposition and Definition VIII.4.5]{dold}, $u$ is an even covering on $\Sigma \backslash \BB_u$.

 Since   $\Aa=\{z \in \Sigma:  \mathrm{ord}^u(z) \geq 2\}$ is a discrete set by Lemma~\ref{A}, to prove $\BB_u$ is discrete it is sufficient to show that
\begin{equation}\label{bu_string}
\BB_u \subset \DD \subset \Aa
\end{equation}
where
\[
\DD=\{p \in \Sigma: \left| w_{\#}(p) \right| \neq 1\}.
\]  

To show the  inclusion on the right in \eqref{bu_string}, let $p_0 \in \DD$.  This implies that $\alpha/\beta \geq 2$ in (\ref{alphabeta}) for every tangent map of $u$ at $p_0$. Hence, $\alpha=\mathrm{ord}^{v_*}(0)=\mathrm{ord}^u(p_0) \geq 2$, and $p_0 \in \Aa$.

To show the inclusion on the left in \eqref{bu_string}, we show $p_0 \notin \DD \Rightarrow p_0 \notin \BB_u$; in other words, $u$ is a local homeomorphism at $p_0 \in \Sigma \backslash \DD$.
Assume without the loss of generality that $\deg_{q_0} u|_U =1$ instead of $-1$, where $U$ is as in (\ref{onlyonept}). Following (\ref{addition}), 
\[
1=\deg_{q_0} u|_V =\sum_{i=1}^k w_{\#}(p_i) \mbox{ where } (u|_V)^{-1}(q):=\{p_1, \dots, p_k\} 
\]
for any $q \in B=\BB_\epsilon(q_0)$ where $B$ and $V=u^{-1}(B)$ are as in (\ref{choiceB}).
 Since, by (iii), $w_{\#}(p)$ has the same sign for all $p \in V$, the set $(u|_V)^{-1}(q)$ must consist of exactly one element for each $q \in B$; i.e.~$u|_V:V \rightarrow B$ is injective.
Thus  $u|_V: V \rightarrow B$ is an open, continuous bijection, and hence  a homeomorphism. 
   \end{proof}

 \begin{corollary}\label{HomeoCor}
Suppose $u:\Sigma \rightarrow (S,d)$ is a proper, non-degenerate harmonic map from a Riemann surface to an oriented locally CAT($\kappa$) surface. If $u$ is degree 1, then $u$ is a homeomorphism.
 \end{corollary}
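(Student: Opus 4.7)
The plan is to leverage Theorem~\ref{itsbranched} and observe that the degree-$1$ hypothesis collapses the branched covering structure to a genuine covering, which is then a homeomorphism; the entire argument runs on the local winding number analysis already developed in the proof of Theorem~\ref{itsbranched}.

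By Theorem~\ref{itsbranched}, $u$ is a branched cover with discrete branch set $\BB_u$, and the proof of that theorem associates to each $p \in \Sigma$ a local winding number $w_{\#}(p) \in \Z$ satisfying $|w_{\#}(p)| = \alpha/\beta \geq 1$ (by property~(i) and the integrality~\eqref{alphabeta}) with a common sign across $\Sigma$ by property~(iii); after possibly reversing the orientation of $S$ we may take this sign to be positive. The same proof also established the inclusion $\BB_u \subset \DD = \{p \in \Sigma : |w_{\#}(p)| \neq 1\}$. Next, I invoke local-degree additivity \cite[Proposition~VIII.4.7]{dold} at an arbitrary $q_0 \in S$: properness together with discreteness (Lemma~\ref{discrete}) makes $u^{-1}(q_0) = \{p_1,\dots,p_k\}$ finite, and as in the proof of Theorem~\ref{itsbranched} one obtains
\[
1 = \deg(u) = \sum_{i=1}^{k} w_{\#}(p_i).
\]
Since every summand is a positive integer, the sum must have exactly one term, equal to $1$. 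Applying the same reasoning to any $q_0 \notin u(\Sigma)$ would give $0 = \deg(u) = 1$, a contradiction, so $u$ is surjective; combined with $|u^{-1}(q_0)| = 1$ for every $q_0 \in S$, the map $u$ is a bijection and $w_{\#}(p) = 1$ for every $p \in \Sigma$.

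Finally, $w_{\#}(p) = 1$ everywhere forces $\DD = \emptyset$, hence $\BB_u = \emptyset$, so $u$ is a local homeomorphism at every point of $\Sigma$. A continuous bijective proper open map is a homeomorphism, with openness already provided by Proposition~\ref{omt}. There is no genuine obstacle beyond the machinery of Theorem~\ref{itsbranched}; the only point requiring mild care is that the degree-sum formula is applied at every $q_0 \in S$ rather than only at regular values, which is justified because the winding numbers $w_{\#}(p_i)$ are defined (and equal to $\alpha/\beta$) at branch points as well, so property~(iii) applies uniformly across $\Sigma$.
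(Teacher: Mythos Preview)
Your proof is correct and follows essentially the same approach as the paper: apply the degree-sum formula $1=\deg(u)=\sum w_{\#}(p_i)$ together with the common sign of the local winding numbers (property~(iii)) to force each fiber to be a singleton, then conclude homeomorphism from openness (Proposition~\ref{omt}). Your argument is somewhat more elaborate than the paper's---you explicitly address surjectivity and deduce $\BB_u=\emptyset$, whereas the paper simply observes that a continuous open bijection is a homeomorphism---but these additions are harmless and the core reasoning is identical.
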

 
\begin{proof}
By \cite[Proposition VIII.4.5 and Proposition VIII.4.7]{dold}, 
 \[
1= \deg(u) = \sum_{\{p \in \Sigma: u(p)=q\}} w_{\#}(p).
\]
Since $w_{\#}(p)$ has the same sign for all $p \in \Sigma$, $\{p: u(p)=q\}$ must consist of exactly one element for each $q \in S$. Thus  $u$ is an open, continuous bijection, and hence  a homeomorphism. 
\end{proof}

\begin{proofMain1} 
Theorem~\ref{itsbranched} asserts that such a map is a branched cover.  If this map is degree 1, then Corollary~\ref{HomeoCor} implies that it is a homeomorphism. 
\end{proofMain1}

\subsection{Non-degenerate maps and $H(k)$-quasiconformality} \label{Hksection}

Recall  the geometric notion of quasiconformality (cf.~\cite{heinonen}).

\begin{definition}\label{qc_def}
For a homeomorphism $u:(X,d_X) \rightarrow (Y,d_Y)$ between metric spaces, define $H_u:X \rightarrow [1,\infty)$ by setting
\[
H_u(p) :=\limsup_{r \rightarrow 0} \frac{L_u(p,r)}{l_u(p,r)} 
\]
where 
\begin{eqnarray*}
L_u(p,r) & = & \max_{d_X(p,q) = r} d_Y(u(p),u(q)),\\
l_u(p,r) & = & \min_{d_X(p,q) = r} d_Y(u(p),u(q)).
\end{eqnarray*}

A map  $u:(X,d_X) \rightarrow (Y,d_Y)$ between metric spaces  is said to be \emph{$H$-quasiconformal} if $u$ is a homeomorphism and $H_u(p) \leq H$ for all $p \in X$.
\end{definition}
 We let $\Sigma$ denote a Riemann surface and fix a conformal metric $g$ on $\Sigma$ of constant curvature $-1, 0,$ or $1$. 
 Let $u:\Sigma \rightarrow (S,d)$ be a non-degenerate harmonic homeomorphism from $\Sigma$ to a locally CAT($\kappa$) surface. In constructing the tangent map we choose normal coordinates with respect to this metric $g$. The tangent map structure of \eqref{tangent_map_equation} and the definition of $H$ imply that for each $p \in \Sigma$, if $\alpha=\mathrm{ord}^u(p)$ and $k=k_u(p)$, then
  \begin{equation} \label{Hup}
H_{v_*}(0):= \left\{\begin{array}{ll}1, & \text{if } k=0,\\
 \left( \frac{k^{-\frac{1}{2}} + k^{\frac{1}{2}}}{k^{-\frac{1}{2}} - k^{\frac{1}{2}}} \right)^{\frac{1}{\alpha}},& \text{if } k\in (0,1).
 \end{array}\right.
\end{equation}
Note that $\alpha$ and $k$ are independent of the choice of tangent map and thus this structure holds for \emph{all} (Alexandrov) tangent maps of $u$ at $p$.
\begin{lemma}\label{Hklemma}
Let $u:\Sigma \rightarrow (S,d)$ be a non-degenerate harmonic homeomorphism from a Riemann surface to a locally CAT($\kappa$) surface. 
Then, for every $p_0 \in \Sigma$,
\[
H_u(p_0) = H_{v_*}(0)
= H_{u^{-1}}(u(p_0))
\]
where $v_*$ is a tangent map of $u$ at $p_0$.
\end{lemma}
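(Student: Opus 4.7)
The plan is to establish both equalities by a single blow-up scheme exploiting the scale-invariance of the distortion ratio and the homogeneity of the tangent map. Fix a sequence $\sigma_j \to 0$ along which $\{u_{\sigma_j}\}$ converges locally uniformly in the pullback sense to an Alexandrov tangent map $v_*$ (Lemma~\ref{v*}). Working in normal coordinates at $p_0$ so that the domain metric is Euclidean to first order, and using $d_{\sigma_j} = \mu_{\sigma_j}^{-1} d$, the factor $\mu_{\sigma_j}$ cancels in the ratio, giving the scale-invariance
\[
\frac{L_u(p_0, \sigma_j r)}{l_u(p_0, \sigma_j r)} \;=\; \frac{L_{u_{\sigma_j}}(0, r)}{l_{u_{\sigma_j}}(0, r)}.
\]

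For the first equality, non-degeneracy of $u$ ensures that $v_*$ is bounded away from $\OO$ on $\partial \D_r$ for each $r>0$, so both $L_{v_*}(0,r)$ and $l_{v_*}(0,r)$ are positive and realized by continuous functions on the compact set $\partial \D_r$. The locally uniform pullback convergence $u_{\sigma_j}\to v_*$ then yields $L_{u_{\sigma_j}}(0,r)\to L_{v_*}(0,r)$ and $l_{u_{\sigma_j}}(0,r)\to l_{v_*}(0,r)$. Since $v_*$ is $\alpha$-homogeneous by Theorem~\ref{tangent_maps_are_cones}, the ratio $L_{v_*}(0,r)/l_{v_*}(0,r)$ is independent of $r$ and equals $H_{v_*}(0)$. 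Because $k_u(p_0)$ is intrinsic to $p_0$ (Remark~\ref{stretch_defn}), every subsequential blow-up produces the same limiting ratio, so the limsup $\limsup_{r\to 0} L_u(p_0,r)/l_u(p_0,r)$ is attained along every blow-up sequence, giving $H_u(p_0) = H_{v_*}(0)$.

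For the second equality, apply the analogous scheme to $u^{-1}:(S,d)\to \Sigma$ at $q_0 = u(p_0)$. The quantities $L_{u^{-1}}(q_0,s)$ and $l_{u^{-1}}(q_0,s)$ are the extreme $d_\Sigma$-distances attained on the preimage of the $d$-sphere of radius $s$ about $q_0$; after rescaling the target by $\mu_{\sigma_j}^{-1}$ and the domain about $p_0$ by $\sigma_j^{-1}$, these correspond (via $u_{\sigma_j}^{-1}$) to the analogous extreme distances for $v_*^{-1}$. Non-degeneracy together with the explicit classification \eqref{tangent_map_equation} guarantees that $v_*$ restricts to a homeomorphism on a punctured neighborhood of $0$ with continuous inverse, so the pullback uniform convergence transfers to the inverses on $\delta$-spheres of each fixed radius. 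Combined with the homogeneity of $v_*$, this gives $H_{u^{-1}}(q_0) = H_{v_*^{-1}}(\OO)$. A direct calculation from \eqref{tangent_map_equation} using the metric $ds^2 = \beta^2 |z|^{2(\beta-1)}|dz|^2$ on $T_{q_0}S$, coupled with the relation $\alpha/\beta=1$ enforced by the homeomorphism hypothesis (cf.~the proof of Theorem~\ref{itsbranched}), yields the symmetry $H_{v_*^{-1}}(\OO) = H_{v_*}(0)$, completing the argument.

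The main obstacle is the second step: promoting the pullback convergence $u_{\sigma_j}\to v_*$ to convergence of extreme distances under the inverse maps $u_{\sigma_j}^{-1}$. This requires the strict positivity $\inf_{|z|=r}\delta(\OO, v_*(z))>0$ from non-degeneracy, together with careful control of the bijectivity of $u_{\sigma_j}$ on the shrinking annular preimages of $\delta$-spheres in $T_{q_0}S$.
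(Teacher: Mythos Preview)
Your treatment of the first equality $H_u(p_0)=H_{v_*}(0)$ is essentially the paper's argument: scale-invariance of the ratio, compactness on $\partial\D_r$, uniform pullback convergence, and homogeneity to make the limit independent of $r$. The paper carries out the two inequalities $H_u(0)\le H_{v_*}(0)$ and $H_{v_*}(0)\le H_u(0)$ separately, but your observation that $k_u(p_0)$ is intrinsic (so every subsequential tangent map gives the same ratio) is an acceptable shortcut.

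The second equality is where your approach diverges and where there is a genuine gap. You propose to show $H_{u^{-1}}(q_0)=H_{v_*^{-1}}(\OO)$ by transferring convergence to the inverses, and then to close the argument with a symmetry $H_{v_*^{-1}}(\OO)=H_{v_*}(0)$ allegedly coming from $\alpha/\beta=1$. That symmetry is \emph{false} in general. With $\alpha=\beta$ and $k\in(0,1)$, the formula \eqref{tangent_map_equation} gives, for $|z|=r$,
\[
\delta\big(v_*(z),\OO\big)=\tfrac{c^\beta r^\alpha}{2}\,\big|k^{-1/2}e^{i\alpha\theta}+k^{1/2}e^{-i\alpha\theta}\big|,
\]
so the forward distortion ratio on a domain circle is $\dfrac{k^{-1/2}+k^{1/2}}{k^{-1/2}-k^{1/2}}$. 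For the inverse, fixing $\delta(\cdot,\OO)=s$ and solving for $|z|$ introduces a $1/\alpha$ power, yielding
\[
H_{v_*^{-1}}(\OO)=\Big(\tfrac{k^{-1/2}+k^{1/2}}{k^{-1/2}-k^{1/2}}\Big)^{1/\alpha}.
\]
These agree only when $\alpha=1$. The homeomorphism hypothesis forces $\alpha/\beta=1$, i.e.\ $\alpha=\beta$, but it does \emph{not} force $\alpha=1$: if $u(p_0)$ is a cone point of $(S,d)$ with $\beta>1$, then $\alpha=\beta>1$ at a local homeomorphism. So your ``direct calculation'' step does not yield the claimed identity.

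The paper avoids both the inverse-convergence issue you flag and this symmetry problem by a different mechanism: it chooses a sequence $\rho_i\to0$ realizing the limsup for $H_{u^{-1}}$, pulls the extremal points $P_i,Q_i$ back to $z_i',\zeta_i'$ in the domain, sets the blow-up scale $\sigma_i=2|z_i'|$ \emph{from the domain side}, and then computes the limiting ratio $|z_\infty|/|\zeta_\infty|$ directly from the explicit formula \eqref{tangent_map_equation}, identifying the limiting arguments via the extremality of $P_i,Q_i$. No convergence of inverses and no abstract symmetry are invoked; the answer is read off from the model map.
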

\begin{proof}
Let $p_0 \in \Sigma$.  Use normal coordinates with respect to the Riemannian metric $g$ to identify a neighborhood of $p_0$ with a disk $\D$ and $p_0$ with the origin $0 \in \D$.  Let $r_i \rightarrow 0$ be such that
\[
H_u(0) 
=\lim_{i \rightarrow \infty}  \frac{ L_u(0,r_i)}{ l_u(0,r_i)}.
\]
Let $ z_i'$, $ \zeta_i' \in \D$ be points such  that
\[
| z_i'|=r_i=| \zeta_i'|,
\ \ 
L_u(0,r_i)= d(u(0),u( z_i')) \ \mbox{ and } \  l_u(0,r_i)= d(u(0),u( \zeta_i')).
\]
Let $\sigma_i=2r_i$,  $ z_i= \frac{ z_i'}{\sigma_i}$ and $ \zeta_i = \frac{ \zeta_i'}{\sigma_i}$.  Note that $|{z}_i|=|{\zeta}_i|=\frac{r_i}{\sigma_i}= \frac{1}{2}$.  Taking a subsequence if necessary, we can assume $ z_i \rightarrow  z_{\infty}$, $ \zeta_i \rightarrow  \zeta_{\infty}$ and $u_{\sigma_i}$ converges locally uniformly to $v_*$.  Then 
\[
H_u(0) =\lim_{i \rightarrow \infty} \frac{L_u(0,r_i)}{l_u(0,r_i)} =\frac{\delta(v_*(0),v_*({z}_{\infty}))}
{\delta(v_*(0),v_*({\zeta}_{\infty}))} \leq H_{v_*}(0).
\]
Next, note that by homogeneity
\[
H_{v_*}(0) =  \frac{L_{v_*}(0,\frac{1}{2})}{l_{v_*}(0,\frac{1}{2})}.
\]
Now let  $\hat z_\infty, \hat \zeta_\infty \in \D$ with $|\hat z_\infty|= |\hat \zeta_\infty|=\frac{1}{2}$ such that
\[
H_{v_*}(0) =\frac{\delta(v_*(0),v_*(\hat{z}_{\infty}))}
{\delta(v_*(0),v_*(\hat{\zeta}_{\infty}))} =
\lim_{\sigma_i \rightarrow 0}  \frac{d_{\sigma_i} (u(0),u( \hat z_\infty))}
{d_{\sigma_i} (u(0),u( \hat\zeta_\infty))} 
= \lim_{\sigma_i \rightarrow 0} \frac{d(u(0), u(\sigma_i  \hat z_\infty))}{d(u(0), u(\sigma_i \hat \zeta_\infty))}\leq H_u(0).
\]
Thus,
 $H_u(0)=H_{v_*}(0)$.

We will next show that for $P_0:= u(p_0)$, $H_{u^{-1}}(P_0) = H_{v_*}(0)$.  
 Let $\rho_i \rightarrow 0$, $P_i,Q_i \in S$  such that 
\[
H_{u^{-1}}(P_0) = \limsup_{\rho \rightarrow 0} \frac{L_{u^{-1}}(P_0,\rho)}{l_{u^{-1}}(P_0,\rho)}=\lim_{i \rightarrow \infty}  \frac{L_{u^{-1}}(P_0,\rho_i)}{l_{u^{-1}}(P_0,\rho_i)},
\]
\[
d(P_0,P_i) = \rho_i \mbox{  and } \ 
L_{u^{-1}}(P_0,\rho_i)= d_g(u^{-1}(P_0),u^{-1}(P_i)),
\]
\[
d(P_0,Q_i) = \rho_i \mbox{  and } \ 
l_{u^{-1}}(P_0,\rho_i)= d_g(u^{-1}(P_0),u^{-1}(Q_i)).
\]
Use normal coordinates centered at $u^{-1}(P_0)$ with respect to the Riemannian metric $g$ to identify a neighborhood of $u^{-1}(P_0)$ with a disk $\D$  and $u^{-1}(P_0)$ with the origin $0 \in \D$.  Assuming $\rho_i$ is sufficiently small, let $ z_i',  \zeta_i' \in \D$ be such that
\[
 z_i'=u^{-1}(P_i) \mbox{ and }  \zeta_i'=u^{-1}(Q_i)
\]  
which implies
\[
H_{u^{-1}}(P_0)=\lim_{i \rightarrow \infty} \frac{| z_i'|}{| \zeta_i'|}.
\]
Let  $\sigma_i =2| z_i'|$,  $ z_i=\frac{ z_i'}{\sigma_i}$, $ \zeta_i=\frac{ \zeta_i'}{\sigma_i}$.  Thus, $| z_i|= \frac{1}{2}$ and $| \zeta_i| \leq \frac{1}{2}$.       There exists a subsequence of blow up maps $u_{\sigma_i}$, which we denote again by $u_{\sigma_i}$,  that converge locally uniformly (after rotation if necessary) to a tangent map $v_*$ which again satisfies \eqref{tangent_map_equation} and \eqref{Hup}.

We can assume $ z_i \rightarrow  z_\infty$, $ \zeta_i \rightarrow  \zeta_\infty$ and
\[
\arg  z_\infty=\frac \pi{2\alpha} \ \mbox{ and } \ \arg  \zeta_\infty = 0.
\]
Moreover, $| z_\infty|=\frac{1}{2}$ and for $\delta$ induced by the metric $\beta^2 |z|^{2(\beta-1)} |dz|^2$,
\begin{eqnarray*}
\lim_{i \to \infty} d_{\sigma_i}(P_0, P_i) & = &  \delta(0,v_*( z_\infty))
\\
& = &  \int_0^{|v_*( z_\infty)|} \beta t^{\beta-1} dt
\\
& = &  |v_*( z_{\infty})|^\beta
\\
& = & \left\{ \begin{array}{ll} \frac {c^\beta}{2^\alpha}, & \text{ if } k=0,\\
\frac{ c^\beta}2  \left( k^{-\frac{1}{2}} \left( \frac{1}{2} \right)^\alpha-k^{\frac{1}{2}} \left( \frac{1}{2} \right)^\alpha \right) ,  & \text{ if } k \in (0,1)
\end{array}\right.
\\
& = & \left\{\begin{array}{ll} \frac {c^\beta} {2^\alpha},& \text{ if } k=0,\\
\frac {c^\beta}{2^{\alpha+1}}  \left(k^{-\frac{1}{2} }- k^{\frac{1}{2} }\right),& \text{ if } k \in(0,1).
\end{array}\right.
\end{eqnarray*}
Similarly, with  $| \zeta_\infty|=:r\leq \frac 12$, we obtain
\[
\lim_{i \to \infty} d_{\sigma_i}(P_0, P_i)= \lim_{i \to \infty} d_{\sigma_i}(P_0, Q_i)=\left\{\begin{array}{ll}\frac {c^\beta}{r^\alpha},& \text{ if } k=0\\ \frac{c^\beta}2 r^{\alpha}   \left( k^{-\frac{1}{2}} + k^{\frac{1}{2}}  \right), &\text{ if }k\in(0,1).\end{array}\right.
\]
Combining the above two equalities and solving for $r$, we obtain
\[
H_{u^{-1}}(P_0)=\lim_{i \rightarrow \infty} \frac{| z_i|}{| \zeta_i|} = \frac{| z_\infty|}{| \zeta_\infty|} =\frac{1}{2r}=\left\{\begin{array}{ll}1, &\text{ if }k=0\\ \left( \frac{k^{-\frac{1}{2}} + k^{\frac{1}{2}}}{k^{-\frac{1}{2}} - k^{\frac{1}{2}}} \right)^{\frac{1}{\alpha}},& \text{ if }k \in(0,1)\end{array}\right.=H_{v_*}(0).
\]
\end{proof}

\begin{proposition}\label{Main2}
Let $u:\Sigma \rightarrow (S,d)$ be a non-degenerate harmonic homeomorphism from a Riemann surface (with the fixed constant curvature metric $g$ chosen as before) to a locally CAT($\kappa$) surface and let $k_u(p)$ denote the {stretch of $u$ at $p$} (cf. Remark \ref{stretch_defn}). Then $u$ and $u^{-1}$ are both $H(k)$-quasiconformal in the metric space sense with 
  \begin{equation} \label{Hustar}
H({k}):= \left\{\begin{array}{ll}1, & \text{if } k=0,\\
\frac{k^{-\frac{1}{2}} + k^{\frac{1}{2}}}{k^{-\frac{1}{2}} - k^{\frac{1}{2}}} ,& \text{if } k \in (0,1)
 \end{array}\right.
\end{equation}if and only if $k := \sup\{k_u(p): p \in \Sigma\} \in [0,1)$. 
 \end{proposition}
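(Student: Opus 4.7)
The plan is to combine Lemma~\ref{Hklemma} with the explicit formula (\ref{Hup}) and reduce the quasiconformality bounds to a pointwise inequality in the stretch $k_u(p)$ and the order $\alpha(p) := \mathrm{ord}^u(p)$. The decisive algebraic identity, obtained by multiplying numerator and denominator by $k^{1/2}$, is
\[
\frac{k^{-1/2}+k^{1/2}}{k^{-1/2}-k^{1/2}} \;=\; \frac{1+k}{1-k},
\]
which is monotone increasing on $[0,1)$ and diverges at $k=1$; thus $\sup_p k_u(p)$ directly controls the global qc constant.

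For the $(\Leftarrow)$ direction, suppose $k \in [0,1)$. Fix $p \in \Sigma$ and let $v_*$ be a tangent map of $u$ at $p$. Lemma~\ref{Hklemma} together with (\ref{Hup}) gives
\[
H_u(p) \;=\; H_{u^{-1}}(u(p)) \;=\; H_{v_*}(0) \;=\; \left(\frac{1+k_u(p)}{1-k_u(p)}\right)^{1/\alpha(p)},
\]
interpreted as $1$ when $k_u(p)=0$. By (\ref{orderlimitexists}) we have $\alpha(p)\geq 1$, so $1/\alpha(p) \in (0,1]$. Since the base is $\geq 1$, raising it to an exponent in $(0,1]$ cannot increase it; combining this with the monotonicity of $t \mapsto (1+t)/(1-t)$ on $[0,1)$ and the hypothesis $k_u(p)\leq k$ yields $H_u(p)\leq (1+k)/(1-k) = H(k)$ uniformly in $p$. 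Hence both $u$ and $u^{-1}$ are $H(k)$-quasiconformal.

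For the $(\Rightarrow)$ direction, note that the formula for $H(k)$ defines a finite real number precisely when $k \in [0,1)$, so the hypothesis that $u$ and $u^{-1}$ be $H(k)$-quasiconformal with $H(k)$ given by the stated formula immediately forces $k \in [0,1)$. A contrapositive analogue adds content: if $k=1$, a sequence $p_j \in \Sigma \setminus \mathcal{A}$ (with $\mathcal{A}$ the discrete set from Lemma~\ref{A}) chosen with $k_u(p_j) \to 1$ satisfies $\alpha(p_j) < 2$, and therefore $H_u(p_j) \to \infty$ by the formula, ruling out any finite qc bound. The main obstacle throughout is the possible interplay between a large order $\alpha(p)$ and a stretch close to $1$: a large $\alpha$ could in principle suppress blow-up of the base, but in the $(\Leftarrow)$ direction we need only an upper bound on $H_u$, which $\alpha(p)\geq 1$ and base $\geq 1$ jointly supply, while for the contrapositive in $(\Rightarrow)$ Lemma~\ref{A} provides the required bound on $\alpha(p_j)$ on the complement of $\mathcal{A}$.
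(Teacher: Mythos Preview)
Your proof is correct and follows the same approach as the paper: apply Lemma~\ref{Hklemma} and (\ref{Hup}) pointwise, use $\alpha(p)\geq 1$ together with the monotonicity of $t\mapsto(1+t)/(1-t)$ to bound $H_u(p)\leq H(k)$, and for the converse observe that $k=1$ precludes any finite bound. Your contrapositive refinement is actually more careful than the paper's (which simply asserts $H_u(p_i)\to\infty$ without addressing possible suppression by large $\alpha(p_i)$), though your remedy of restricting to $\Sigma\setminus\mathcal A$ tacitly assumes $\sup_{\Sigma\setminus\mathcal A}k_u=1$, which is not justified absent continuity of $k_u$; since your tautological reading of $(\Rightarrow)$ already suffices for the proposition as stated, this does not affect correctness.
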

 
 \begin{proof}
By Lemma \ref{Hklemma} and \eqref{Hup}, for each $p \in \Sigma$, if $k_u(p) \in [0,1)$ then with $\alpha=\mathrm{ord}^u(p)$ 
\[
H_u(p)=H_{u^{-1}}(u(p)) = \left\{\begin{array}{ll}1, & \text{if } k_u(p)=0,\\
 \left( \frac{k_u(p)^{-\frac{1}{2}} + k_u(p)^{\frac{1}{2}}}{k_u(p)^{-\frac{1}{2}} - k_u(p)^{\frac{1}{2}}} \right)^{\frac{1}{\alpha}},& \text{if } k_u(p)\in (0,1).\end{array}\right.
\]Let $k:= \sup\{k_u(p): p \in \Sigma\}$. If $k \in [0,1)$ then since $\alpha=\mathrm{ord}^u(p) \geq 1$ for all $p\in \Sigma$, we immediately conclude the result with $H_k$ as in \eqref{Hustar}. 

On the other hand, if $k=1$ then there exist $p_i \in \Sigma$ such that $H_u(p_i) \to \infty$ and thus $u$ cannot be $H$-quasiconformal for any $H$.
\end{proof}

\begin{lemma}\label{nd_lemma}
A non-trivial almost conformal harmonic map $u:\Sigma \rightarrow (S,d)$ from a Riemann surface to a locally CAT($\kappa$) surface is non-degenerate. Indeed, an Alexandrov tangent map $v_*$ of $u$ at $p_0 \in \Sigma$ is of the form $I_{u(p_0)} \circ v_*=\frac{z^{\alpha/\beta}}{\sqrt{2\pi}}$  with $\frac{\alpha}{\beta} \in \N$ where $I_{u(p_0)}$ is as in (\ref{Iq}),  $\alpha=\text{ord}^u(p_0)$ and $\beta$ is as in  (\ref{localmetric}).
\end{lemma}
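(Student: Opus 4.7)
The plan is to first show that any Alexandrov tangent map $v_*$ of $u$ at $p_0$ inherits almost conformality from $u$, and then to use Kuwert's classification (cf.~the discussion surrounding \eqref{tangent_map_equation}) together with a Hopf-differential computation to force $v_*$ into the stated conformal form, from which non-degeneracy follows via Remark~\ref{stretch_defn}.

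I would first argue that each blow-up map $u_\sigma$ is almost conformal. The rescaling $d_\sigma=\mu_\sigma^{-1}d$ together with $u_\sigma(z)=u(\sigma z)$ yields $\pi_{u_\sigma}(\omega_1,\omega_2)(z)=\mu_\sigma^{-2}\sigma^2\pi_u(\omega_1,\omega_2)(\sigma z)$ for constant vector fields $\omega_1,\omega_2$, so the almost-conformal identities $\pi(\partial_x,\partial_x)=\pi(\partial_y,\partial_y)$ and $\pi(\partial_x,\partial_y)=0$ pass to every $u_\sigma$. Since the directional energy density functions of $u_{\sigma_j}$ converge weakly to those of $v_*$ by Lemma~\ref{v*}, and since $\pi(\omega_1,\omega_2)=\tfrac14(|u_*(\omega_1+\omega_2)|^2-|u_*(\omega_1-\omega_2)|^2)$, these identities survive in the limit and $v_*$ is itself almost conformal.

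Next I would apply Kuwert's classification to $v_*$. By Theorem~\ref{tangent_maps_are_cones}, $v_*$ is a homogeneous harmonic map, and by Proposition~\ref{coneovercurve} I identify the target isometrically with $(\C,\beta^2|z|^{2(\beta-1)}|dz|^2)$ via $I_{u(p_0)}$. For $k:=k_u(p_0)=1$, writing $I_{u(p_0)}\circ v_*(re^{i\theta})=\rho(r,\theta)e^{i\phi_j}$ on each sector gives pullback metric $\beta^2\rho^{2(\beta-1)}(\rho_x^2\,dx^2+\rho_y^2\,dy^2+2\rho_x\rho_y\,dx\,dy)$; the almost-conformal condition forces $\rho_x=\rho_y=0$, so $\rho$ is locally constant, and the boundary condition $\rho=0$ on each sector wall (shared with an adjacent sector mapping to a distinct ray through $\OO$) forces $\rho\equiv 0$ on every sector, contradicting the non-triviality $\int_{\partial\D}\delta^2(v_*,\OO)\,d\theta=1$ guaranteed by \eqref{normalizing_eq}. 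For $k\in(0,1)$ with $I_{u(p_0)}\circ v_*(z)=c[\tfrac12(k^{-1/2}z^\alpha+k^{1/2}\bar z^\alpha)]^{1/\beta}$, setting $A=\tfrac12(k^{-1/2}z^\alpha+k^{1/2}\bar z^\alpha)$ yields $w_z\overline{w_{\bar z}}$ proportional to $|A|^{2/\beta-2}z^{2(\alpha-1)}$, which is not identically zero; hence the Hopf differential of $v_*$ does not vanish, again contradicting almost conformality.

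Consequently $k=0$, so $I_{u(p_0)}\circ v_*(z)=cz^{\alpha/\beta}$ with $\alpha/\beta\in\N$ by the integrality condition \eqref{alphabeta}, and the normalization \eqref{normalizing_eq} combined with $\delta(v_*(e^{i\theta}),\OO)=c^\beta$ pins down $c$. Since $v_*$ satisfies \eqref{nondeg_eq}, Remark~\ref{stretch_defn} yields that $u$ is non-degenerate. The step I expect to be most delicate is the first one: transferring almost conformality from $u$ to $v_*$ is not an intrinsic metric property of $v_*$ but a statement about the pullback tensor $\pi_{v_*}$, so it relies essentially on the weak convergence of directional energy density functions supplied by Lemma~\ref{v*}; once this is in hand, the case analysis via Kuwert's explicit formulas is routine.
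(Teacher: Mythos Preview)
Your proof is correct and follows essentially the same approach as the paper: both arguments transfer almost conformality from $u$ to $v_*$ via the weak convergence of directional energy densities supplied by Lemma~\ref{v*}, and then invoke Kuwert's classification to conclude $k=0$. The paper compresses the second step into a single sentence (``hence $v_*$ is an almost conformal map; i.e.~$I_{u(p_0)} \circ v_*=cz^{\alpha/\beta}$''), whereas you explicitly verify that the $k=1$ and $k\in(0,1)$ forms in \eqref{tangent_map_equation} have nonvanishing Hopf differential---a useful check, though your justification for $\rho=0$ on sector walls in the $k=1$ case could be stated more directly as a feature of Kuwert's classification rather than via adjacency of distinct rays.
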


\begin{proof}
 Let $p_0 \in \Sigma$ and $u_{\sigma_j}$ be the sequence of  blow up maps that  converges locally uniformly to $v_*:  \D \rightarrow T_{u(p_0)}S$ (cf.~Lemma~\ref{v*}).     Then the Hopf differentials $\Phi_{u_{\sigma_j}}$ converge to $\Phi_{v_*}$.  Since $u$ is almost conformal, so is $u_{\sigma_j}$ and thus $\Phi_{u_{\sigma_j}} \equiv 0$.  Since the directional energy densities of $u_{\sigma_j}$ converge weakly to those of $v_*$, $\Phi_{v_*} \equiv 0$, and hence $v_*$ is an almost conformal map; i.e.~$I_{u(p_0)} \circ v_*=cz^{\alpha/\beta}$ (cf.~(\ref{tangent_map_equation})).  By \eqref{normalizing_eq}, $c=\frac{1}{\sqrt{2\pi}}$.
\end{proof}

\begin{proofSphereCor}
Let $u:\Sp^2 \rightarrow (S,d)$ be a non-trivial harmonic map from the standard sphere to an oriented locally CAT$(\kappa$) sphere. Then $u$ is almost conformal by  Lemma \ref{holomorphic0} which in turn implies $u$ is non-degenerate by Lemma~\ref{nd_lemma}. It follows from Theorem \ref{Main1} that $u$ is a branched cover, and if the degree of $u$ is 1 then $u$ is a homeomorphism. The structure of $v_*$ at every point of $\Sp^2$ given by Lemma~\ref{nd_lemma} implies that $k=0$ for all $p \in \Sp^2$. Thus, the 1-quasiconformal assertion follows from Proposition \ref{Main2}. 
\end{proofSphereCor}

\section{Proof of Theorem~\ref{Main3}} \label{pachelbel}

In this section, we prove Theorem \ref{Main3}. First, in Subsection \ref{unique_section} we use the structure of an almost conformal Alexandrov tangent map $v_*$ given in Lemma \ref{nd_lemma} and the weak differential inequalities satisfied by the conformal factor $\lambda_u$ given in Theorem \ref{lambdadifeq} to prove the uniqueness statement in Theorem~\ref{Main3}. In Subsection \ref{coarea_section}, we use the approximate metric differential and its structure for finite energy maps given in Lemma \ref{KSjac} coupled with the coarea formula to relate the Hausdorff measure of the image of a map to its total energy. Finally, in Subsection \ref{final_section} we prove the main theorem.

 \subsection{Uniqueness of almost conformal harmonic homeomorphisms}\label{unique_section}
The goal of this subsection is to prove a uniqueness statement for an almost conformal harmonic homeomorphism (cf.~Proposition \ref{unique_mobius}). We start with some preliminary results that  rely heavily on Theorem \ref{lambdadifeq} (cf. \cite{mese}) and the representation of Alexandrov tangent maps given in Lemma \ref{nd_lemma}.

\begin{lemma}  \label{cf}
If $u:\Sigma \rightarrow (S,d)$ is an almost conformal harmonic map from a Riemann surface into a locally CAT($\kappa$) space and $\D$ is a holomorphic disk  with conformal coordinates $z=x+iy$  centered at  $p \in \Sigma$, then there exists an $L^1$-representative $\tilde \lambda_u$ of the conformal factor $\lambda_u$  such that 
\[
 \lim_{\sigma \rightarrow 0} \frac{1}{2\pi \sigma} \int_{\partial \D_\sigma(z_0)} \lambda_u \, d\HH^1= \lim_{\sigma \rightarrow 0} \frac{1}{\pi \sigma^2} \int_{\D_\sigma(z_0)} \lambda_u \, dxdy  = \tilde \lambda_u(z_0), \ \ \forall z_0 \in \D
 \]
 where $\D_\sigma(z_0)=\{z \in \D:  |z-z_0|<\sigma\}$. 
 \end{lemma}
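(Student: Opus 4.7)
The plan is to rewrite $\lambda_u$ as a distributional subharmonic function plus a smooth correction, and then invoke the classical monotone mean value property for subharmonic functions. Fix a relatively compact subdomain $\D_0 \Subset \D$. By Theorem~\ref{lambdadifeq}, $\lambda_u$ is locally bounded and satisfies $\triangle \lambda_u \geq -2\kappa \lambda_u^2$ weakly. Setting $M := 2\kappa \sup_{\D_0}\lambda_u^2 < \infty$ and
\[
v := \lambda_u + \tfrac{M}{4}|z|^2,
\]
one computes $\triangle(\tfrac{M}{4}|z|^2) = M$, so $\triangle v \geq 0$ weakly on $\D_0$; that is, $v$ is a distributional subharmonic function.

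Next I will appeal to the classical fact from potential theory that any $L^1_{\mathrm{loc}}$ distributional subharmonic function admits a unique upper semicontinuous representative $\bar v$ (agreeing with $v$ a.e.) which is classically subharmonic. For such $\bar v$, the disk average $A_\sigma(z_0) := \frac{1}{\pi\sigma^2}\int_{\D_\sigma(z_0)}\bar v\,dxdy$ is non-decreasing in $\sigma$ and is bounded below by $\bar v(z_0)$ via the sub--mean value inequality; upper semicontinuity gives $\limsup_{\sigma \to 0} A_\sigma(z_0) \leq \bar v(z_0)$, forcing $A_\sigma(z_0) \to \bar v(z_0)$ for every $z_0 \in \D_0$. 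The same argument applies to the circle averages $a_\sigma(z_0) := \frac{1}{2\pi\sigma}\int_{\partial \D_\sigma(z_0)}\bar v\,d\HH^1$, which are well defined on every circle since $\lambda_u \in W^{1,2}_{\mathrm{loc}}(\D)$ by Theorem~\ref{lambdadifeq} and thus admits $W^{1,2}$-traces on every circle.

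Finally I will define $\tilde\lambda_u(z_0) := \bar v(z_0) - \tfrac{M}{4}|z_0|^2$ and verify the conclusion: since $\tfrac{M}{4}|z|^2$ is continuous, its disk and circle averages on $\D_\sigma(z_0)$ converge to $\tfrac{M}{4}|z_0|^2$, and subtracting from the corresponding averages of $\bar v$ produces the two advertised limits for $\lambda_u$. Because $\bar v = v$ a.e., $\tilde\lambda_u = \lambda_u$ almost everywhere, so $\tilde\lambda_u$ is a legitimate $L^1$ representative; the definition of $\tilde\lambda_u(z_0)$ depends only on the pointwise limit of averages of $\lambda_u$ and is therefore intrinsic, gluing across overlapping choices of $\D_0$ to give a global representative on $\D$. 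The only substantive input beyond Theorem~\ref{lambdadifeq} is the classical potential-theoretic statement on u.s.c.\ subharmonic representatives with monotone mean values; the main point to notice --- and the only real ``obstacle'' --- is that the right-hand side $-2\kappa \lambda_u^2$ of the weak Laplace inequality is nonlinear, so the local $L^\infty$ bound on $\lambda_u$ (also from Theorem~\ref{lambdadifeq}) must be used to absorb it into a smooth quadratic perturbation before the classical theory becomes applicable.
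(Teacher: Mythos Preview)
Your argument is correct and follows essentially the same route as the paper's proof: both invoke Theorem~\ref{lambdadifeq} to get local boundedness and the weak inequality $\triangle \lambda_u \geq -2\kappa\lambda_u^2$, then use the mean value inequality for (almost) subharmonic functions to conclude that the circle and disk averages converge everywhere, with the Lebesgue differentiation theorem (equivalently, the a.e.\ agreement of $\bar v$ with $v$) identifying the limit as an $L^1$-representative. The paper's proof is a two-line sketch that simply says ``by the mean value inequality''; you have explicitly carried out the standard reduction to the subharmonic case via the quadratic correction $v=\lambda_u+\tfrac{M}{4}|z|^2$, which is precisely the content hidden in that phrase.
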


 \begin{proof}
 $\lambda_u$ satisfies the  weak differential inequality $\triangle \lambda \geq -2\kappa \lambda^2$  and is locally bounded (cf.~Theorem~\ref{lambdadifeq}).  Thus,  for any $z_0 \in \D$, 
 \[
 \lim_{\sigma \rightarrow 0} \frac{1}{2\pi \sigma} \int_{\partial \D_\sigma(z_0)} \lambda_u \, d\HH^1 = \lim_{\sigma \rightarrow 0}  
\frac{1}{\pi \sigma^2} \int_{\D_\sigma(z_0)} \lambda_u \, dxdy \mbox{ exists}
\]
by the mean value inequality.  By the  Lebesgue differentiation  theorem, the function $\tilde \lambda_u$ given by this limit at every point of $\D$ is in the $L^1$-class of the conformal factor.
 \end{proof}
 
 \begin{remark}
 We will henceforth denote the $L^1$-representative $\tilde \lambda_u$ in Lemma~\ref{cf} by $\lambda_u$. 
  \end{remark}
 
\begin{lemma} \label{zeroes}For $u$ as in Lemma \ref{cf}, 
if $\lambda_u$ is the conformal factor of $u$ in $\D$ and $\ZZ_u=\{z \in \D:  \lambda_u(z)=0\}$, then $\dim_\HH(\ZZ_u)=0$. 
\end{lemma}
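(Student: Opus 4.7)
The plan is to exploit the subharmonicity of $\log \lambda_u$ up to a smooth lower-order correction and then invoke classical potential theory. First, note that if $\lambda_u \equiv 0$ on $\D$ then $u$ is constant on $\D$ and the lemma fails; so the conclusion must be read under the implicit (and contextually correct) assumption that $u$ is not constant on $\D$, i.e.\ $\lambda_u \not\equiv 0$. (In the application to Proposition~\ref{unique_mobius}, $u$ is a homeomorphism and this is automatic.)

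Next, I would use Theorem~\ref{lambdadifeq}, which gives that $\lambda_u$ is locally bounded on $\D$ and that $\triangle \log \lambda_u \geq -2\kappa \lambda_u$ holds weakly. Given any compact $K \subset \D$, set $M_K := \sup_K \lambda_u < \infty$ and consider the auxiliary function
\[
w_K(z) := \log \lambda_u(z) + \frac{\kappa M_K}{2}|z|^2.
\]
Since $\triangle |z|^2 = 4$, on the interior of $K$ we have, in the weak sense,
\[
\triangle w_K \geq -2\kappa \lambda_u + 2\kappa M_K \geq 0,
\]
so $w_K$ is subharmonic on $K^\circ$. Because the correction term is smooth and finite, $\{w_K = -\infty\} = \ZZ_u \cap K^\circ$.

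Now, choosing $K$ to contain a point where $\lambda_u > 0$ (which exists by the non-triviality assumption), $w_K$ is not identically $-\infty$ on the connected open set $K^\circ$. A standard result in two-dimensional potential theory then says that the $-\infty$-set of a subharmonic function that is not identically $-\infty$ is a polar set, i.e.\ a set of zero logarithmic capacity (see e.g.\ Ransford, \emph{Potential Theory in the Complex Plane}). Another classical fact is that polar sets in $\R^2$ have Hausdorff dimension zero; indeed, every polar set has zero $h$-Hausdorff measure for the gauge $h(r) = 1/\log(1/r)$, and so has $\dim_\HH = 0$.

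Finally, exhausting $\D$ by an increasing sequence of compact sets $\{K_j\}$ whose interiors cover $\D$, $\ZZ_u = \bigcup_j (\ZZ_u \cap K_j^\circ)$ is a countable union of polar sets, hence polar, hence of Hausdorff dimension zero. The only real step is the subharmonic correction $w_K$; the rest reduces to citation of well-known facts. The mildly delicate point is the weak interpretation of $\triangle \log \lambda_u$ at points where $\lambda_u = 0$, but this is exactly what Theorem~\ref{lambdadifeq} provides, so no further work is needed there.
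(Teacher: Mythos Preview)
Your argument is correct and is precisely the approach the paper takes: the paper's proof simply records that $\lambda_u$ is locally bounded and $\triangle \log \lambda_u \geq -2\kappa \lambda_u$ weakly (Theorem~\ref{lambdadifeq}) and then cites the standard theory of subharmonic functions in \cite{hayman-kennedy}, which is exactly the polar-set argument you have spelled out. One tiny cosmetic point: your correction $\tfrac{\kappa M_K}{2}|z|^2$ only yields $\triangle w_K \geq 0$ when $\kappa \geq 0$; for $\kappa<0$ no correction is needed since $\log\lambda_u$ is already subharmonic (or simply replace $\kappa$ by $\max(\kappa,0)$).
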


\begin{proof}
The conformal factor $\lambda_u$ satisfies the weak differential inequality $\triangle \log \lambda_u \geq -2 \kappa \lambda_u$ and is locally bounded  (cf.~Theorem~\ref{lambdadifeq}).  Thus, the result follows from the standard theory of subharmonic functions (cf.~\cite{hayman-kennedy}).
\end{proof}

\begin{lemma}  \label{An}
If $u:\Sigma \rightarrow (S,d)$ is an almost conformal harmonic map from a Riemann surface into a locally CAT($\kappa$) surface and $\D$ is a holomorphic disk in $\Sigma$, then $A :=\{z \in \D:  \mathrm{ord}^u(z) >1\}$ is a countable set.\end{lemma}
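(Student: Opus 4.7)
The plan is to imitate the discreteness argument of Lemma~\ref{A}, but at a threshold just above $1$ instead of $2$. Specifically, I would write
\[
A=\bigcup_{n=1}^{\infty} A_n,\qquad A_n:=\Bigl\{z\in \D : \mathrm{ord}^u(z)\ge 1+\tfrac{1}{n}\Bigr\},
\]
and it suffices to prove each $A_n$ is a discrete subset of $\D$; then $A$ is a countable union of discrete (hence countable) sets, proving the lemma.

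To show $A_n$ is discrete I would argue by contradiction, mirroring the proof of Lemma~\ref{A}. Assume $\{z_j\}\subset A_n$ accumulates at some $z_0\in \D$; work in a holomorphic disk centered at $z_0$ and set $\sigma_j=2|z_j|$, $\zeta_j=z_j/\sigma_j\in \partial \D_{1/2}$. After extracting subsequences, I may arrange that $\zeta_j\to \zeta_0\in \partial\D_{1/2}$, the blow ups $u_{\sigma_j}$ converge locally uniformly in the pullback sense to a tangent map $u_*$, and the associated Alexandrov tangent map $v_*$ satisfies $d_*(u_*(x_0),u_*(x_1))=\delta(v_*(x_0),v_*(x_1))$ by Lemma~\ref{v*}.

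The key input, and exactly where the almost conformal hypothesis is used, is the identity $\alpha_{u_*}(\zeta_0)=1$. Since $u$ is almost conformal, Lemma~\ref{nd_lemma} identifies the Alexandrov tangent map explicitly as
\[
I_{u(z_0)}\circ v_*(z)=\frac{z^{\alpha/\beta}}{\sqrt{2\pi}},\qquad \alpha=\mathrm{ord}^u(z_0),
\]
on the target $(\C,\beta^2|w|^{2(\beta-1)}|dw|^2)$. At $\zeta_0\neq 0$ this is a smooth local diffeomorphism onto a neighborhood of a regular point of the target, so the standard computation of the order of a smooth harmonic map with nonvanishing differential at a regular point yields $\alpha_{v_*}(\zeta_0)=1$; the pullback-distance identity of Lemma~\ref{v*} then transfers this to $\alpha_{u_*}(\zeta_0)=1$.

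With $\alpha_{u_*}(\zeta_0)=1$ secured, the order-upper-semicontinuity Lemmas \ref{order_equal} and \ref{order_upper} used in the proof of Lemma~\ref{A} give
\[
\limsup_{j\to\infty}\alpha_{u_{\sigma_j}}(\zeta_j)\le \alpha_{u_*}(\zeta_0)=1.
\]
Since $\alpha_{u_{\sigma_j}}(\zeta_j)=\alpha_u(z_j)\ge 1+1/n$, this is a contradiction for $j$ large, showing $A_n$ is discrete and completing the proof. The main (and really only) obstacle is checking carefully that the order at a non-origin point of the explicit tangent map $z^{\alpha/\beta}/\sqrt{2\pi}$ equals $1$; once that is verified, the rest is a direct transcription of the Lemma~\ref{A} argument with the threshold $2$ replaced by $1+1/n$.
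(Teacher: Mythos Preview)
Your proposal is correct and follows essentially the same approach as the paper: the paper writes $A=\bigcup_n A_n$ with $A_n=\{z:\mathrm{ord}^u(z)>1+\tfrac{1}{n}\}$ and simply notes that discreteness of each $A_n$ follows immediately from the proof of Lemma~\ref{A} with the threshold $2$ replaced by $1+\tfrac{1}{n}$. The one difference is that you invoke Lemma~\ref{nd_lemma} and the explicit form of $v_*$ to verify $\alpha_{u_*}(\zeta_0)=1$, whereas the paper treats this equality as already established in the proof of Lemma~\ref{A} for arbitrary locally CAT($\kappa$) targets (it holds for any homogeneous harmonic tangent map, without using almost conformality or the surface structure); your justification is correct but more than is needed.
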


\begin{proof}
 Since $A=\bigcup_{n \in \N} A_n$ where $A_n=\{z \in \D:  \mathrm{ord}^u(z) >1+\frac{1}{n}\}$, it is sufficient to show that $A_n$ is a discrete set. But this follows immediately from the proof of Lemma \ref{A}. 
\end{proof}

\begin{lemma}  \label{dl}
If $u:\Sigma \rightarrow (S,d)$ is an almost conformal harmonic map from a Riemann surface into a locally CAT($\kappa$) surface and $\D$ is holomorphic disk in $\Sigma$, then 
\[
\lim_{z \rightarrow z_0} \frac{d^2(u(z),u(z_0))}{|z-z_0|^2} =  \lambda_u(z_0), \ \ \forall z_0 \in \D
\] 
where $\lambda_u$ is the conformal factor of $u$ in $\D$. 
\end{lemma}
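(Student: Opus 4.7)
The plan is to combine the Alexandrov tangent map structure from Lemma~\ref{nd_lemma}, the weak convergence of energy densities from Lemma~\ref{v*}, and the mean-value representative of the conformal factor from Lemma~\ref{cf}. Without loss of generality set $z_0=0$, write $\alpha=\mathrm{ord}^u(0)$, and let $\beta$ be the cone parameter of $T_{u(0)}S$ as in~\eqref{localmetric}. Given any sequence $z_n\to 0$, set $\sigma_n=|z_n|$ and $\zeta_n=z_n/\sigma_n$. After passing to a subsequence, $\zeta_n\to\zeta_\infty\in\partial\D$ and the blow-ups $u_{\sigma_n}$ converge locally uniformly in the pullback sense to an Alexandrov tangent map $v_*$ satisfying $I_{u(0)}\circ v_*(\zeta)=c\zeta^{\alpha/\beta}$ with $c^\beta=1/\sqrt{2\pi}$ (Lemma~\ref{nd_lemma}). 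Pullback convergence then yields $d_{\sigma_n}(u_{\sigma_n}(\zeta_n),u_{\sigma_n}(0))\to \delta(v_*(\zeta_\infty),v_*(0))=c^\beta|\zeta_\infty|^\alpha=c^\beta$, so
\[
\frac{d^2(u(z_n),u(0))}{|z_n|^2}=\frac{\mu_{\sigma_n}^2}{\sigma_n^2}\,d_{\sigma_n}^2(u_{\sigma_n}(\zeta_n),u_{\sigma_n}(0))\longrightarrow c^{2\beta}\lim_n\frac{\mu_{\sigma_n}^2}{\sigma_n^2}.
\]

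The core computation is to identify $\lim_{\sigma\to 0}\mu_\sigma^2/\sigma^2$. Pulling back the cone metric $\beta^2|w|^{2(\beta-1)}|dw|^2$ under $w=cz^{\alpha/\beta}$ gives the conformal factor $\lambda_{v_*}(z)=\alpha^2c^{2\beta}|z|^{2\alpha-2}$ of the tangent map, and integration yields $\int_{\D_r}\lambda_{v_*}\,dxdy=\alpha r^{2\alpha}/2$. By Lemma~\ref{v*}, the densities $|\nabla u_\sigma|^2$ converge weakly to $|\nabla v_*|^2$; integrating over $\D_r$ and changing variables $w=\sigma z$ gives
\[
\frac{1}{\mu_\sigma^2}\int_{\D_{\sigma r}}\lambda_u(w)\,dw\longrightarrow \frac{\alpha r^{2\alpha}}{2}.
\]
On the other hand, Lemma~\ref{cf} yields $\int_{\D_{\sigma r}}\lambda_u\,dw=\pi(\sigma r)^2(\lambda_u(0)+o(1))$. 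Equating these two asymptotic expressions produces
\[
\mu_\sigma^2 \;\sim\; \frac{2\pi\sigma^2\,r^2\,(\lambda_u(0)+o(1))}{\alpha\, r^{2\alpha}+o(1)}.
\]

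If $\alpha=1$, then \eqref{alphabeta} forces $\beta=1$ and $c^2=1/(2\pi)$, and the relation above collapses to $\mu_\sigma^2/\sigma^2\to 2\pi\lambda_u(0)$; hence $d^2(u(z_n),u(0))/|z_n|^2\to c^2\cdot 2\pi\lambda_u(0)=\lambda_u(0)$. If $\alpha>1$, then the left side of the previous relation is $r$-independent while the right side scales like $r^{2-2\alpha}\lambda_u(0)$, so we must have $\lambda_u(0)=0$; Lemma~\ref{cf} then gives $\int_{\D_{\sigma r}}\lambda_u\,dw=o(\sigma^2)$, so that $\mu_\sigma^2=o(\sigma^2)$ and the limit ratio is $0=\lambda_u(0)$, again matching the claim. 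Since $\alpha$, $\beta$, and $c^\beta$ are intrinsic to $u$ at $z_0$ (the first is the order, the latter two depend only on the cone at $u(z_0)$), every subsequence yields the same limit, so the full sequence $z_n$ converges to $\lambda_u(0)$.

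The principal technical hurdle is the legitimate use of weak convergence against the indicator $\mathbf{1}_{\D_r}$, which is handled by the local $L^1$ convergence of energy densities provided by Lemma~\ref{v*} and standard approximation by smooth test functions. A secondary care point is that the splitting into the two cases $\alpha=1$ versus $\alpha>1$ is clean precisely because Lemma~\ref{An} assures us that the latter points form only a countable set, and the former case recovers the expected conformal identification of $|z|^2$ with $d^2$ via the explicit normalization $c^2=1/(2\pi)$.
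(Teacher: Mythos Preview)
Your argument is correct and follows essentially the paper's proof: blow up, invoke Lemma~\ref{nd_lemma} to identify the Alexandrov tangent map, compute $\lim_{\sigma\to 0}\mu_\sigma^2/\sigma^2$ in terms of $\lambda_u(0)$, and treat $\alpha>1$ separately. The one technical slip is the choice $\sigma_n=|z_n|$, which lands $\zeta_n$ on $\partial\D$ where the locally-uniform pullback convergence of Lemma~\ref{v*} is not directly asserted; the paper takes $\sigma_i=2|z_i|$ so that the rescaled points sit on $\partial\D_{1/2}\Subset\D$ (this is easily patched, since $u$ is harmonic on the full disk $\D$ and hence $u_\sigma$ is defined and uniformly Lipschitz on a disk strictly larger than $\overline\D$). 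Your route to $\mu_\sigma^2/\sigma^2$ via the weak convergence of energy densities and your $r$-independence argument for $\lambda_u(0)=0$ when $\alpha>1$ are minor variants of the paper's direct use of the order identity~\eqref{orderlimitexists} together with Lemma~\ref{cf}, and of the energy monotonicity estimate from \cite[Corollary~6.8]{BFHMSZ}, respectively.
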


\begin{proof} 
Let $z_0 \in \D$ and without loss of generality, assume $z_0=0$.  
It is sufficient to show that every sequence $\frac{d(u(z_i),u(0))}{|z_i|}$ with $z_i\rightarrow 0$ has a subsequence that converges to $\lambda_u(0)$.  
Let $\sigma_i=2|z_i|$.  Choose a subsequence $\sigma_{i'} \rightarrow 0$ such that   $\hat z_{i'}=\frac{z_{i'}}{\sigma_{i'}} \rightarrow \hat z_* \in \partial \D_{\frac{1}{2}}$ and  the sequence $\{u_{\sigma_{i'}}\}$ of blow up maps converges locally uniformly in the pullback sense to an Alexandrov  tangent map $v_*$.  By Lemma \ref{nd_lemma}, $v_*$ is identified with the complex-valued function $I_{u(0)} \circ v_*:\C\rightarrow (\C,ds^2)$ given by $I_{u(0)} \circ v_*(z)=\frac{z^{\alpha/\beta}}{\sqrt{2\pi}}$ where $\alpha=\text{ord}^u(0)$ and $ds^2 = \beta^2|z|^{2(\beta-1)}|dz|^2$.  Thus, 
\[
\delta^2(v_*(\hat z_*),v_*(0))  =\frac{1}{2\pi \cdot 2^{2\alpha}}.
\]
By \eqref{mu}, \eqref{orderlimitexists}, and Lemma~\ref{cf}, 
\begin{equation} \label{limlambda}
\lim_{\sigma \rightarrow 0}  \frac{\mu_u^2(\sigma)}{\sigma^2} =
\lim_{\sigma \rightarrow 0}  \frac{\displaystyle{\int_{\partial \D_\sigma} d^2(u,u(0)) d\theta}}{\sigma^3}=\lim_{\sigma \rightarrow 0} \frac{\displaystyle{2 \int_{\D_\sigma} \lambda_u \, dxdy}}{\alpha  \sigma^2}  =   \frac{2\pi \lambda_u(0)}{\alpha}.
\end{equation}
Therefore,
\begin{eqnarray*}
\lim_{i \rightarrow \infty} \frac{d^2 \big(u(z_{i'}), u(0)\big)}{|z_{i'}|^2}
 & = &
 \lim_{i \rightarrow \infty} \frac{\mu_u^2(\sigma_{i'})}{\sigma_{i'}^2}
 \cdot \
  \lim_{i \rightarrow \infty} \frac{\sigma_{i'}^2}{|z_{i'}|^2} \cdot  d_{\sigma_{i'}}^2 \big(u_{\sigma_{i'}}(\hat z_{i'}), u_{\sigma_{i'}}(0)\big)
 \\
 & = &  \frac{2\pi \lambda_u(0)}{\alpha}  \cdot  \frac{\delta^2 \big(v_*(\hat z_*), v_*(0)\big)}{|\hat z_*|^2}
= \frac{\lambda_u(0)}{\alpha} 2^{2(1-\alpha)}.
 \end{eqnarray*}
 If $\alpha>1$, then from the monotonicity property of energy (cf.~\cite[Corollary 6.8]{BFHMSZ})
 \[
 \frac{1}{\sigma^{2\alpha}} E^u[\D_\sigma]  \leq e^{\rho^\gamma}  \frac{1}{\rho^{2\alpha}} E^u[\D_\rho], \ \ 0<\sigma<\rho< \sigma_0
 \]
for some $\sigma_0$, $\gamma>0$. We therefore conclude
 \[
\lambda_u(0) = \lim_{\sigma \rightarrow 0} \frac{1}{\pi \sigma^2}  \int_{\D_\sigma} \lambda_u \, dxdy =C \lim_{\sigma \rightarrow 0} \sigma^{2(\alpha-1)} =0.
\]
Thus, for either $\alpha=1$ or $\alpha>1$,  we obtain  $
 \lim_{i \rightarrow \infty} \frac{d^2 (u(z_{i'}), u(0))}{|z_{i'}|^2}=\lambda_u(0).
$
\end{proof}

When $\mathrm{ord}^u(z_0)=1$, we get a ``lower-Lipschitz bound" near $z_0$ which depends only on the conformal factor.

\begin{lemma}\label{lem_order_one_conv}
Let $u:\Sigma \rightarrow (S,d)$ be an almost conformal harmonic map from a Riemann surface into a CAT($\kappa$) surface and $\D$ is holomorphic disk in $\Sigma$. Suppose that $z_0 \in \D$ and $\mathrm{ord}^u(z_0)=1$. Then 
\[
\lim_{z, \zeta \rightarrow z_0} \frac{d^2(u(z),u(\zeta))}{|z-\zeta|^2} =  \lambda_u(z_0)
\] 
where $\lambda_u$ is the conformal factor of $u$ in $\D$. 
\end{lemma}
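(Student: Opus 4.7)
The plan is to argue by contradiction, extending the one-variable blow-up analysis of Lemma \ref{dl} to pairs of variables. Without loss of generality take $z_0 = 0$, and suppose for contradiction that there exist $\epsilon_0 > 0$ and distinct sequences $z_i, \zeta_i \to 0$ with
\[
\left|\frac{d^2(u(z_i), u(\zeta_i))}{|z_i - \zeta_i|^2} - \lambda_u(0)\right| \geq \epsilon_0.
\]
I would set $\sigma_i = 2\max(|z_i|, |\zeta_i|, |z_i - \zeta_i|)$ and $\hat z_i = z_i/\sigma_i$, $\hat \zeta_i = \zeta_i/\sigma_i$, so that $\hat z_i, \hat \zeta_i \in \overline{\D_{1/2}}$ with $\max(|\hat z_i|, |\hat \zeta_i|, |\hat z_i - \hat \zeta_i|) = 1/2$.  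After passing to a subsequence, $\hat z_i \to \hat z_*$, $\hat \zeta_i \to \hat \zeta_*$, and by Lemma \ref{v*} the blow-ups $u_{\sigma_i}$ converge locally uniformly in the pullback sense to an Alexandrov tangent map $v_*$.  Since $\mathrm{ord}^u(0) = 1$ and $u$ is almost conformal, Lemma \ref{nd_lemma} identifies $v_*$ with $z \mapsto z/\sqrt{2\pi}$ into $T_{u(0)}S = (\C, |dz|^2)$ (forcing $\beta = 1$ in \eqref{localmetric}).

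In the \emph{generic subcase} $\hat z_* \neq \hat \zeta_*$, the argument runs parallel to that of Lemma \ref{dl}.  Writing
\[
\frac{d^2(u(z_i), u(\zeta_i))}{|z_i - \zeta_i|^2} = \frac{\mu_{\sigma_i}^2}{\sigma_i^2} \cdot \frac{d_{\sigma_i}^2(u_{\sigma_i}(\hat z_i), u_{\sigma_i}(\hat \zeta_i))}{|\hat z_i - \hat \zeta_i|^2},
\]
the identity \eqref{limlambda} with $\alpha = 1$ shows the first factor tends to $2\pi \lambda_u(0)$, while pullback convergence together with $\hat z_* \neq \hat \zeta_*$ forces the second factor to tend to $\delta^2(v_*(\hat z_*), v_*(\hat \zeta_*))/|\hat z_* - \hat \zeta_*|^2 = 1/(2\pi)$.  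Their product is $\lambda_u(0)$, contradicting the contrary hypothesis.

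The main obstacle is the \emph{degenerate subcase} $\hat z_* = \hat \zeta_*$, in which $|z_i - \zeta_i| = o(\sigma_i)$ and the displayed identity above no longer resolves the ratio.  My plan is to overcome this by first upgrading the weak convergence $\lambda_{u_{\sigma_i}} \rightharpoonup \lambda_{v_*} \equiv 1/(2\pi)$ from Lemma \ref{v*} to locally uniform convergence on compact subsets of $\D$; this uses the weak differential inequalities $\triangle \lambda_u \geq -2\kappa\lambda_u^2$ and $\triangle \log \lambda_u \geq -2\kappa\lambda_u$ of Theorem \ref{lambdadifeq} (which rescale covariantly under the blow-up), the uniform $L^\infty$ bound on $\lambda_{u_{\sigma_i}}$, and Harnack-type estimates for sub-solutions.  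Transferring back via $\lambda_{u_{\sigma_i}}(z) = \sigma_i^2 \mu_{\sigma_i}^{-2} \lambda_u(\sigma_i z)$ yields continuity of $\lambda_u$ at $0$.  From there I would combine the straight-line upper bound $d^2(u(z), u(\zeta)) \leq |z-\zeta|^2 \sup_{[z,\zeta]} \lambda_u$ (valid for almost conformal maps by integrating the directional energy density along a.e. segment through \eqref{110v}) with a matching lower bound obtained by applying Lemma \ref{dl} at the moving basepoints $\zeta_i$, using $\lambda_u(\zeta_i) \to \lambda_u(0)$ and a diagonal selection to make the pointwise asymptotic uniform in $i$, thereby squeezing the ratio to $\lambda_u(0)$ in the degenerate subcase as well.

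The most delicate step will be the promotion from weak to locally uniform convergence of the rescaled conformal factors: this is the technical heart of the proof and exploits simultaneously the explicit tangent-map identification in Lemma \ref{nd_lemma} (the limit $\lambda_{v_*}$ being a positive constant) and the sub-solution structure encoded in Theorem \ref{lambdadifeq}.
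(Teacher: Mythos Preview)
Your generic subcase is exactly the paper's argument: the paper takes $\sigma_i=2\max\{|z_i|,|\zeta_i|\}$, extracts a subsequence with $\hat z_{i'}\to\hat z_*\in\partial\D_{1/2}$ and $\hat\zeta_{i'}\to\hat\zeta_*\in\overline{\D_{1/2}}$, invokes Lemma~\ref{nd_lemma} with $\alpha=\beta=1$ to identify $v_*(z)=z/\sqrt{2\pi}$ into $(\C,|dz|^2)$ so that $\delta^2(v_*(\hat z_*),v_*(\hat\zeta_*))=|\hat z_*-\hat\zeta_*|^2/(2\pi)$, and then writes the limit of the ratio as $2\pi\lambda_u(0)\cdot\delta^2(v_*(\hat z_*),v_*(\hat\zeta_*))/|\hat z_*-\hat\zeta_*|^2=\lambda_u(0)$. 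The paper does \emph{not} single out the degenerate case $\hat z_*=\hat\zeta_*$; its displayed computation tacitly requires $\hat z_*\neq\hat\zeta_*$ (the second factor is otherwise $0/0$, and locally uniform pullback convergence alone does not control the ratio when $|\hat z_{i'}-\hat\zeta_{i'}|\to 0$). So on this point you are being more careful than the paper.

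Your proposed repair for the degenerate subcase, however, has real gaps of its own. The inequalities in Theorem~\ref{lambdadifeq} make $\lambda_u$ and $\log\lambda_u$ near-\emph{sub}harmonic; these give sub-mean-value \emph{upper} bounds and upper semicontinuity, not the two-sided Harnack control you would need to upgrade the weak convergence $\lambda_{u_{\sigma_i}}\rightharpoonup 1/(2\pi)$ to locally uniform convergence (the limit being a positive constant does not by itself supply a lower barrier). The line-integral bound $d^2(u(z),u(\zeta))\le|z-\zeta|^2\sup_{[z,\zeta]}\lambda_u$ rests on the identity $|u_*(\omega)|^2=\lambda_u$, which holds only a.e.\ and need not hold along the particular segments $[z_i,\zeta_i]$. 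And invoking Lemma~\ref{dl} at the moving basepoints $\zeta_i$ requires uniformity in the basepoint that the lemma does not assert; the ``diagonal selection'' you sketch would need a quantitative version of that lemma, which you have not established.
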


\begin{proof}
Without the loss of generality, assume $z_0=0$.
It is sufficient to show that every sequence $\frac{d(u(z_i),u(\zeta_i))}{|z_i-\zeta_i|}$ with $z_i, \zeta_i \rightarrow 0$ has a subsequence that converges to $\lambda_u(0)$.  
Let $\sigma_i=2\max \{|z_i|, |\zeta_i|\}$.  By relabeling and taking a subsequence if necessary, assume  that $\sigma_i=2 |z_i| \geq 2|\zeta_i|$.   Furthermore, choose a subsequence $\sigma_{i'} \rightarrow 0$ such that   $\hat z_{i'}=\frac{z_{i'}}{\sigma_{i'}} \rightarrow \hat z_* \in \partial \D_{\frac{1}{2}}$, $\hat \zeta_{i'}=\frac{\zeta_{i'}}{\sigma_{i'}} \rightarrow \hat \zeta_* \in \overline{\D_{\frac{1}{2}}}$ and  the sequence $\{u_{\sigma_{i'}}\}$ of blow up maps converges locally uniformly in the pullback sense to an Alexandrov  tangent map $v_*$. 
Since $\mathrm{ord}^u(0)=1$, the representation of $v_*$ given by Lemma \ref{nd_lemma} implies that $\alpha_u(0)=1=\beta(u(0))$ and $I_{u(0)} \circ v_*(z)=\frac{z}{\sqrt{2\pi}}$. Moreover the metric on the tangent cone (cf. \eqref{localmetric}) is given by $ds^2= |dz|^2$. It follows that $\delta^2(v_*(\hat z_*),v_*(\hat \zeta_*))  =\frac{|\hat z_*- \hat \zeta_*|^2}{2\pi}$.
Thus, 
\begin{eqnarray*}
\lim_{i \rightarrow \infty} \frac{d^2 \big(u(z_{i'}), u(\zeta_{i'})\big)}{|z_{i'}-\zeta_{i'}|^2} & = &
 \lim_{i \rightarrow \infty} \frac{\mu_u^2(\sigma_{i'})}{\sigma_{i'}^2}
 \cdot \
  \lim_{i \rightarrow \infty} \frac{\sigma_{i'}^2}{|z_{i'}-\zeta_{i'}|^2} \cdot  d_{\sigma_{i'}}^2 \big(u_{\sigma_{i'}}(\hat z_{i'}), u_{\sigma_{i'}}(\hat \zeta_{i'})\big)
 \\
 & = & 2\pi  \lambda_u(0) \cdot  \frac{\delta^2 \big(v_*(\hat z_*), v_*(\hat \zeta_*)\big)}{|\hat z_*-\hat \zeta_*|^2}
=  \lambda_u(0).
 \end{eqnarray*}
\end{proof}

\begin{proposition}[Uniqueness of almost conformal  harmonic homeomorphisms from $\Sp^2$]  \label{unique_mobius}
An almost conformal harmonic homeomorphism from the standard sphere $\Sp^2$ into a locally CAT($\kappa$) sphere   is uniquely determined  up to a M\"obius transformation of $\Sp^2$; i.e.~if $u, v:\Sp^2 \rightarrow (S,d)$ are almost conformal harmonic homeomorphisms from the standard sphere into a CAT($\kappa$) sphere, then $u=v \circ M$ where $M:\Sp^2 \rightarrow \Sp^2$ is a M\"obius transformation.
\end{proposition}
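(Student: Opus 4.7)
The plan is to consider the self-map $M := v^{-1} \circ u : \Sp^2 \to \Sp^2$ and show that $M$ is a Möbius transformation, from which $u = v \circ M$ follows immediately.

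Because $u$ and $v$ are non-trivial degree-one harmonic maps from $\Sp^2$, Theorem~\ref{sphere_cor} gives that $u$, $v$, $u^{-1}$, and $v^{-1}$ are all $1$-quasiconformal homeomorphisms in the metric sense of Definition~\ref{qc_def}. Since metric $H$-quasiconformality is submultiplicative under composition, $M$ is a $1$-quasiconformal self-homeomorphism of $\Sp^2$.

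It remains to identify $M$ as a Möbius transformation, and we do this directly using the paper's conformal-factor machinery. By Lemmas~\ref{An} and \ref{zeroes} applied to both $u$ and $v$, outside a set $E \subset \Sp^2$ of Hausdorff dimension zero we have $\mathrm{ord}^u(p) = \mathrm{ord}^v(M(p)) = 1$ and $\lambda_u(p), \lambda_v(M(p)) > 0$. At any such $p$, Lemma~\ref{lem_order_one_conv} applied to $u$ and to $v$, together with the identity $u = v \circ M$ and the continuity of $M$, yields
\[
\lim_{z,\zeta \to p} \frac{|M(z) - M(\zeta)|^2}{|z - \zeta|^2} \;=\; \frac{\lambda_u(p)}{\lambda_v(M(p))},
\]
independently of the direction in which $z,\zeta \to p$. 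Since $M$ is orientation-preserving, this isotropic infinitesimal stretching forces $M$ to be complex differentiable at every $p \notin E$, so $M$ is holomorphic on the co-null set $\Sp^2 \setminus E$. Because $E$ has Hausdorff dimension zero and $M$ is continuous (hence locally bounded in any chart), Painlevé's removable singularity theorem extends $M$ to a holomorphic self-map of $\Sp^2$, which is therefore a Möbius transformation.

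The main technical point will be the derivation of the direction-independent stretching identity for $M$: one must combine Lemma~\ref{lem_order_one_conv} applied to $u$ and to $v$, and verify that the good sets where both sides of the limit behave well coincide up to a Hausdorff-dimension-zero exceptional set. Once this isotropy is in hand, the classical complex-analytic identification of biholomorphisms of $\Sp^2$ with Möbius transformations finishes the argument.
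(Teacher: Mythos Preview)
Your overall strategy---study $M=v^{-1}\circ u$ and show it is M\"obius---matches the paper's, and you correctly identify Lemmas~\ref{zeroes}, \ref{An}, and \ref{lem_order_one_conv} as the relevant ingredients. However, there are two genuine gaps.

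First, the assertion that the exceptional set $E$ has Hausdorff dimension zero is unjustified at the crucial point. The sets $\{p:\mathrm{ord}^u(p)>1\}$, $M^{-1}(\{q:\mathrm{ord}^v(q)>1\})$, and $\ZZ_u$ are fine, but $M^{-1}(\ZZ_v)$ is the preimage of a Hausdorff-dimension-zero set under a map that, at this stage, is only a homeomorphism; such a preimage can have positive Lebesgue measure. The paper does \emph{not} prove $\dim_\HH M^{-1}(\ZZ_v)=0$; it proves only $\HH^2(M^{-1}(\Ss_v))=0$ (the {\sc Claim} in the proof), and even that requires a non-trivial argument: one uses Lemma~\ref{lem_order_one_conv} at a density point of $M^{-1}(\Ss_v)\setminus\Ss_u$ to get a local lower-Lipschitz bound for $u$, pushes forward to obtain positive $\HH^2_d$-measure for $v(\Ss_v)$, and contradicts the Lipschitz bound on $v$ together with $\dim_\HH\Ss_v=0$. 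You flag this step in your final paragraph but do not supply it, and the Hausdorff-dimension-zero version you need is strictly stronger than what the paper establishes.

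Second, the route ``isotropic two-variable limit $\Rightarrow$ complex differentiability at each $p\notin E$ $\Rightarrow$ holomorphic on $\Sp^2\setminus E$ $\Rightarrow$ Painlev\'e'' does not work as written. An isotropic stretching limit at a single point does not by itself give complex differentiability there; even if it did, being complex differentiable at every point of $\Sp^2\setminus E$ is not the same as being holomorphic on an open set unless $E$ is closed; and Painlev\'e's theorem requires a closed removable set. The paper avoids all of this: from $\HH^2(M^{-1}(\Ss_v))=0$ it gets the stretching identity for a.e.~$z_0$, invokes the Radamacher--Stepanoff theorem to obtain real differentiability a.e., reads off that $|dM_{z_0}(\vec v)|$ is independent of $\vec v$ and nonzero a.e., hence $H_M(p)=1$ for a.e.~$p$, and then applies Gehring's theorem (\cite[Theorem~16]{gehring}) to conclude $M$ is M\"obius. (Your opening remark that metric $H$-quasiconformality is submultiplicative under composition is also not valid in general for the pointwise Definition~\ref{qc_def}, so that shortcut does not bypass the issue.)
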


\begin{proof}  
Let $\D'$ be a holomorphic disk with conformal coordinates $z=x+iy$, 
$\D$ be a holomorphic disk with conformal coordinates $\zeta=\xi+i\eta$  
and $v^{-1} \circ u(\D') \subset \D$.  Let $\lambda_u$ (resp.~$\lambda_v$) be the conformal factor of $u$ (resp.~$v$) in $\D'$ (resp.~$\D$).
 We denote the restriction of $v^{-1} \circ u$ to $\D'$ as 
\[
\hat u=v^{-1} \circ u\big|_{\D'}:\D' \rightarrow \D, \ \ 
\hat u(x,y) = (\xi(x,y), \eta(x,y)).
\]
For $z_0 \in \D'$ , let  $\zeta_0=\hat u(z_0)=v^{-1} \circ u(z_0)$.   
Furthermore, we write $\zeta=\hat u(z) =v^{-1} \circ u(z)$.  
Thus, 
\[
d(v(\zeta),v(\zeta_0))=d(u(z),u(z_0))
\]
and, since $v^{-1} \circ u$ is a homeomorphism,
\[
z \rightarrow z_0 \ \Leftrightarrow \ \zeta \rightarrow \zeta_0.
\]
Assume that $\lambda_v(\zeta_0) >0$ and $\mathrm{ord}^v(\zeta_0)=1$.  By applying Lemma~\ref{dl}, 
\begin{eqnarray}
\lim_{z \rightarrow z_0}   \frac{|\hat u(z)-\hat u(z_0)|}{|z-z_0|} & = &  \lim_{z \rightarrow z_0}   \frac{|\hat u(z)-\hat u(z_0)|}{d(u(z), u(z_0))} \frac{d(u(z), u(z_0))} {|z-z_0|} 
\nonumber
\\
& = &  \lim_{z \rightarrow z_0}   \frac{|\hat u(z)-\hat u(z_0) |}{d(u(z), u(z_0))} \lim_{z \rightarrow z_0} \frac{d(u(z), u(z_0))} {|z-z_0|} 
\nonumber
\\
& = &  \lim_{\zeta \rightarrow \zeta_0}   \frac{|\zeta-\zeta_0|}{d(v(\zeta), v(\zeta_0))} \lim_{z \rightarrow z_0} \frac{d(u(z), u(z_0))} {|z-z_0|} 
\nonumber
\\
& = & \sqrt{\frac{\lambda_u(z_0)}{\lambda_v(\zeta_0)}} < \infty. \label{zzet}
\end{eqnarray}
We next prove the following.\\
\\
{\sc Claim.}  $\HH^2(u^{-1} \circ v(\Ss_v))=0$ where $\Ss_v=\{\zeta \in \D:  \lambda_v(\zeta)=0\} \cup \{\zeta \in \D: \mathrm{ord}^v(\zeta)>1\}$. \\

{\sc Proof of Claim.} On the contrary, assume that $\HH^2(u^{-1} \circ v(\Ss_v))>0$.  Since $
\dim_{\HH}(\Ss_u)=0$ where $\Ss_u=\{z \in \D':  \lambda_u(z)=0\} \cup \{z \in \D': \mathrm{ord}^u(z)>1\}$ by Lemma~\ref{zeroes} and Lemma~\ref{An}, we have that $\HH^2(u^{-1} \circ v(\Ss_v) \backslash \Ss_u)>0$. Thus, by \cite[2.10.19]{federer}, there exists a constant $C$ and a point $z_0 \in u^{-1} \circ v(\Ss_v) \backslash \Ss_u$ such that 
\[
\lim_{r \rightarrow 0} \frac{\HH^2\big((u^{-1} \circ v(\Ss_v) \backslash \Ss_u) \cap \D_r(z_0) \big)}{r^2}\geq C.
\]
By Lemma~\ref{lem_order_one_conv} (since $z_0 \notin \Ss_u$) and the above inequality, we can choose $r>0$ such that
\[
d^2(u(z), u(z')) \geq \frac{1}{2} \lambda_u(z_0) |z-z'|^2, \ \ \forall z, z' \in \D_r(z_0)\subset \D'
\]
and 
\[
\HH^2\big( (u^{-1} \circ v(\Ss_v) \backslash \Ss_u) \cap \D_r(z_0) \big) \geq \frac{Cr^2}{2}.
\]
Thus, if $\HH_d^2$ represents the Hausdorff 2-dimensional measure with respect to the distance function $d$ on $S$, then
\begin{eqnarray*}
\HH^2_d\big( (v(\Ss_v) \backslash u(\Ss_u)  )\cap u(\D_r(z_0)) \big) & \geq &  \frac{1}{2} \lambda_u(z_0)  \HH^2\big( (u^{-1} \circ v(\Ss_v) \backslash \Ss_u) \cap \D_r(z_0) \big)
\\
& \geq &  \frac{Cr^2}{4} \lambda_u(z_0)>0.
\end{eqnarray*}  
On the other hand,  with $L$ the Lipschitz constant of $v$ in $\D_R(0)$ for some $R \in (0,1)$ sufficiently large such that $v^{-1} \circ u (\D_r(z_0)) \subset \D_R(0)$ and the fact that $\dim_{\HH}(\Ss_v)=0$ by Lemma~\ref{zeroes} and Lemma~\ref{An},
\[
\HH^2_d\big( (v(\Ss_v) \backslash u(\Ss_u)  )\cap u(\D_r(z_0)) \big)  \leq \HH_d^2(v(\Ss_v) \cap u(\D_r(z_0)) \leq L^2 \HH^2( \Ss_v \cap v^{-1} \circ u(\D_r(z_0)))=0.
\]
Thus, we have arrived at a contradiction.  \hfill $\Box$ ({\sc Claim})
\\

By the above claim, (\ref{zzet}) holds for a.e.~$z_0 \in \D'$. By the Radamacher-Stepanoff Theorem (cf.~\cite[3.1.9]{federer}),  $\hat u(x,y) = (\xi(x,y), \eta(x,y))$ is differentiable almost everywhere in $\D'$
and thus for a.e.~$z_0 \in \D'$ \[
|d\hat u_{z_0}(\vec v)|=\sqrt{\frac{\lambda_u(z_0)}{\lambda_v(\zeta_0)}} \mbox{ for every unit vector } \vec v.
\]
Since the right hand side of the equality above is independent of the unit vector $\vec v$ and is not zero for a.e.~$z_0 \in \D'$ (c.f. Lemma~\ref{zeroes}), we have shown  $H_{v^{-1} \circ u }(p)=1$ for a.e.~$p \in \Sp^2$.  By \cite[Theorem 16]{gehring}, $v^{-1} \circ u$ is a M\"obius transformation.
\end{proof}

\subsection{Area versus energy}\label{coarea_section}
Using the coarea formula, we demonstrate that the two dimensional Hausdorff measure of the image of a finite energy map from a disk is always less than or equal to half its energy. Of particular interest in this paper is when equality holds. 

\begin{definition}\label{monotone_defn}
Let $h:\Sigma \to (S,d)$ be a continuous map. Then $h$ is called \emph{monotone} if $h^{-1}(P)$ is connected for every $P \in S$.
\end{definition}

\begin{lemma} \label{coarea}
If  $(X,d)$ is a complete metric space and $f:\overline \D \rightarrow X$ is a finite energy map, then the following hold:
\begin{enumerate} 
\item[(a)]  
$\HH^2(f(\D)) \leq {^dE^f}/2$.
\item[(b)] \label{item2}  $\HH^2(f(\D)) = {^dE^f}/2={^dA^f}$ if $f$ is an almost conformal monotone map (cf.~Definition~\ref{area_def}).
\item[(c)] \label{item3} If 
$\HH^2(f(\D)) = {^dE^f}/2$,
$f$ is monotone, 
and  $f(\overline \D) \subset \BB_r(q_0)$ where $\overline{\BB_r(q_0)}$ is a CAT$(\kappa$) surface, 
then $f$ is an almost conformal, injective, energy minimizing map. 
\end{enumerate}
\end{lemma}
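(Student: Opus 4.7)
The plan is to combine Kirchheim-type area formulas for metric-valued maps with the AM--HM inequality applied to the directional energies. For (a), Lemma~\ref{KSjac} expresses $\JJ_f(z_0)$ as the harmonic mean of $|f_*(\omega)|^2(z_0)$ over $\omega\in\Sp^1$, while identity \eqref{110v} identifies $\tfrac12|\nabla f|^2(z_0)$ as the corresponding arithmetic mean; the AM--HM inequality therefore yields $\JJ_f\leq\tfrac12|\nabla f|^2$ pointwise a.e. Combining this with the area-formula bound $\HH^2(f(\D))\leq\int_\D \JJ_f\,dxdy$ (valid for finite energy maps by a Lipschitz truncation reducing to the classical Kirchheim/Karmanova formula) gives $\HH^2(f(\D))\leq\tfrac12\,{^dE^f}$.

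For (b), almost conformality asserts $\pi(\partial_x,\partial_x)=\pi(\partial_y,\partial_y)$ and $\pi(\partial_x,\partial_y)=0$, so the directional energies $|f_*(\omega)|^2$ are independent of $\omega$; this is the equality case of AM--HM, whence $\JJ_f=\tfrac12|\nabla f|^2$ a.e. The area-formula equality $\HH^2(f(\D))=\int_\D \JJ_f\,dxdy$ then reduces to the multiplicity assertion $N(\cdot,f)=1$ a.e.~on $f(\D)$, which follows from the monotonicity hypothesis: since $f^{-1}(y)$ is connected and $f$ does not collapse in any direction where $\lambda_f>0$ (by the $\omega$-independence of $|f_*(\omega)|^2$), preimages are single points off the (negligible) zero-set of $\lambda_f$. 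The identity $\,{^dA}^f=\tfrac12\,{^dE^f}$ is then the equality case of Remark~\ref{AE}.

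For (c), equality $\HH^2(f(\D))=\tfrac12\,{^dE^f}$ in the chain from (a) forces both $\JJ_f=\tfrac12|\nabla f|^2$ a.e.~(hence $|f_*(\omega)|^2$ is independent of $\omega$ a.e., i.e., $f$ is almost conformal) and $N(\cdot,f)\equiv1$ a.e.~on $f(\D)$. Combined with monotonicity, continuity of $f$, and the 2-manifold structure of $\overline{\BB_r(q_0)}$, essential injectivity upgrades to honest injectivity on $\overline{\D}$. For the energy-minimizing assertion, let $g\in W^{1,2}_f(\D,\overline{\BB_r(q_0)})$. Since $f$ is a homeomorphism onto its image, $f(\partial\D)$ is a Jordan curve in the oriented CAT$(\kappa)$ surface bounding the closed topological disk $f(\overline{\D})$ (Schoenflies in the surface). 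Because $g|_{\partial\D}$ parametrizes the same Jordan curve with the same winding number, a standard degree argument shows $g(\overline{\D})\supseteq f(\overline{\D})$, hence $\HH^2(g(\D))\geq \HH^2(f(\D))$; applying (a) to $g$ gives $\,{^dE^g}\geq 2\HH^2(g(\D))\geq 2\HH^2(f(\D))={^dE^f}$.

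The main obstacle is the multiplicity-one step: deducing $N(\cdot,f)=1$ a.e.\ from monotonicity in (b) and from the area-formula saturation in (c) requires careful handling of the potential ``collapse set'' $\{\lambda_f=0\}$, and the Schoenflies/degree step in (c) relies on the topological 2-manifold structure of the CAT$(\kappa)$ target together with the fact that an almost conformal map cannot collapse a set of positive area.
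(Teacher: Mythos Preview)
Your treatment of (a) is essentially the paper's: Lipschitz decomposition plus Kirchheim's coarea formula gives $\int_X \#f^{-1}(p)\,d\HH^2=\int_\D\JJ_f$, and then the Jensen/AM--HM inequality against \eqref{110v} yields $\JJ_f\le\tfrac12|\nabla f|^2$. For (b) your mechanism is also the same, but your justification of multiplicity one is more fragile than needed. The paper does not argue via ``$f$ does not collapse where $\lambda_f>0$''; it simply observes that the coarea identity together with ${^dE^f}<\infty$ forces $\#f^{-1}(p)<\infty$ for $\HH^2$-a.e.\ $p$, and a \emph{finite connected} set is a singleton. This avoids any local-injectivity reasoning on the zero set of $\lambda_f$.

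In (c) there are two genuine gaps, and the order of the argument matters.

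\emph{Energy minimality.} You compare $f$ to an arbitrary competitor $g\in W^{1,2}_f(\D,\overline{\BB_r(q_0)})$ via a degree/Schoenflies argument, but such $g$ need not be continuous, so no degree is available and $g(\overline\D)\supset f(\overline\D)$ is unjustified. The paper avoids this by comparing only to the Dirichlet minimizer $u$, which \emph{is} continuous up to the boundary by Lemma~\ref{contbdry}; since $u|_{\partial\D}=f|_{\partial\D}$ and $f$ is monotone, one has $f(\overline\D)=\overline\Omega\subset u(\overline\D)$, and then (a) applied to $u$ gives ${^dE^f}=2\HH^2(f(\D))\le 2\HH^2(u(\D))\le{^dE^u}$, so $f=u$ is the minimizer.

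\emph{Injectivity.} Your step ``essential injectivity upgrades to honest injectivity'' is asserted but not proved, and at that point in your argument $f$ is not yet known to be harmonic, so no tangent-map machinery is available. Monotonicity plus $N(\cdot,f)=1$ a.e.\ does \emph{not} by itself preclude $f$ from collapsing an arc to a point. The paper's route is: having shown $f$ equals the Dirichlet minimizer, $f$ is harmonic; then almost conformal (from the Jensen equality case) and harmonic imply non-degenerate by Lemma~\ref{nd_lemma}, hence discrete by Lemma~\ref{discrete}; and a monotone discrete map is injective. Establishing energy minimality \emph{before} injectivity is what makes the injectivity argument work.
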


\begin{proof} 
  {  
Let $\{A_n\}$ be the disjoint measurable subsets of $\D$  such that  $\HH^2\left (\D  \backslash \bigcup_{n=1}^\infty A_n\right)=0$
and 
$
d(f(z),f(\zeta)) \leq n|z-\zeta|, \ \forall z, \zeta \in A_n
$ (cf.~{\sc Claim} in the proof of Lemma~\ref{KSjac}).  
Fix $n$,  apply the Kuratowski isometric embedding of $X$ into a Banach space $l^\infty(X)$ of bounded functions on $X$ and then apply the Kirsbraun theorem for Banach spaces to  extend the restriction map $f|_{A_n}:A_n \rightarrow X \subset l^{\infty}(X)$ to a Lipschitz map $\hat f_n:\C \rightarrow l^{\infty}(X)$.  By \cite[Theorem 2]{kirchheim}, $\text{MD}(\hat f_n,z_0)=\text{MD}_{ap}(\hat f_n,z_0)$ exists for a.e.~$z_0$.  By
applying the  coarea formula to the Lipschitz map $\hat f_n$ (cf.~\cite[Theorem 7]{kirchheim}), we  obtain
\[
\int_X \# \{\hat f_n^{-1}(p) \cap A_n \} d\HH^2(p)=\int_{A_n}  \JJ_{\hat f_n}(z) \, dxdy.
\]
Since $\hat f_n=f$ in $A_n$,  we have 
$\text{MD}(\hat f_n,z)=\text{MD}_{ap}(f,z)$ for any density 1 point $z \in A_n$. After applying the Lebesgue density theorem,
\[
\int_X \# \{f^{-1}(p) \cap A_n  \} d\HH^2(p) = 
\int_X \# \{\hat f_n^{-1}(p) \cap A_n \} d\HH^2(p)=\int_{A_n}  \JJ_f(z) \, dxdy.
\]
Summing over $n=1,2,\dots$,  we obtain
\begin{equation} \label{cards}
\int_X \# \{f^{-1}(p)\} d\HH^2(p)  =  \int_{\D} \JJ_f(z) \, dxdy.
\end{equation} 
Let  
\begin{equation} \label{setE}
E=\{z \in \D:  \JJ_f(z) =0 \}.
\end{equation}
For a.e.~$z \in \D \backslash E$,  $|f_*(\omega)|^2(z) \neq 0$ for a.e.~$\omega \in \Sp^1$.  Thus,
 \begin{eqnarray}
 \JJ_f(z)  
 & =& \left(  \frac{1}{2\pi} \int_{\omega \in \Sp^1}  |f_*(\omega)|^{-2}(z) d\HH^1(\omega) \right)^{-1} \ \ \mbox{(by Lemma \ref{KSjac})}
 \nonumber 
 \\
 & \leq &
   \frac{1}{2\pi} \int_{\omega \in \Sp^1}  |f_*(\omega)|^2(z) d\HH^1(\omega) \ \ \ \ \mbox{(by Jensen's inequality)} \label{jensen}
   \\
   & = &  |\nabla f|^2(z)/2 \ \ \ \ \ \ \ \  \
   \ \ \ \ \ \ \ \ \ \ \ \ \ \ \ \ \ \ \ \ \  \ \ \ \ \mbox{(by {\eqref{110v})}}.
   \nonumber
\end{eqnarray}
By combining (\ref{cards}) and \eqref{jensen}, we obtain 
\begin{equation} \label{string}
\HH^2(f(\D)) \leq \int_{\D} \JJ_f(z) \, dxdy = \int_{\D \backslash E} \JJ_f(z) \, dxdy\leq \frac{1}{2} \int_{\D \backslash  E} |\nabla f|^2 dxdy \leq \frac{1}{2} \int_{\D} |\nabla f|^2 \, dxdy
\end{equation}
 which proves (a).

Next,  assume $f \in W^{1,2}(\D,X)$ is an almost conformal monotone map.  The inner product structure of $\pi$ and the conformality relation (i.e.~$\pi_{11}=\pi_{22}$ and $\pi_{12}=0$) implies that at a.e.~$z \in \D$,  $|f_*(\omega)|^2(z)=\pi_{11}(z)=|\nabla f|^2(z)/2$ for a.e.~$\omega \in \Sp^1$. 
{ In  the case when $\JJ_f(z) \neq 0$ or the case  when $\JJ_f(z)=0$,     Lemma~\ref{KSjac} implies that 
 $\JJ_f(z)  = |\nabla f|^2(z)/2
$ for a.e.~$z \in \D$.}  Thus, the right hand side of (\ref{cards}) is equal to ${^d}E^f/2={^d}A^f$  (cf. Definition~\ref{area_def} and Remark~\ref{AE}).  This then implies that   $\# \{f^{-1}(p)\} \neq \infty$ for a.e.~$p \in X$ since the left hand side of (\ref{cards}) is $<\infty$.  Since $f$ is monotone, we conclude that  $\# \{f^{-1}(p)\} =1$ for a.e.~$p \in X \cap f(\D)$.  Thus,   (\ref{cards}) implies (b).

}
{ 
Finally, assume $\HH^2(f(\D)) = {^dE^f}/2$,
$f$ is monotone and  $f(\overline \D) \subset \BB_r(q_0)$.
Since $f$ is monotone, $f(\overline \D)=\overline \Omega$  where $\Omega$ is the topological disk in $\overline{\BB_r(q_0)}$ bounded by the simple closed curve $f(\partial \D)$. Let $u$ be the energy minimizing map in  $W^{1,2}_f(\D,\BB_r(q_0))$. By the continuity of $u$ (cf.~Lemma~\ref{contbdry}),  $\overline \Omega \subset u(\overline \D)$.  Combining this with item (a),  ${^dE^f} = 2\HH^2(f(\D)) \leq 2\HH^2(u(\D)) \leq {^dE^u} $, which implies that $f=u$ is the energy minimizing map in $W^{1,2}_f(\D,\BB_r(q_0))$.  
Combining (\ref{string}) with the assumption $\HH^2(f(\D)) = {^dE^f}/2$, we conclude
\[
\#\{f^{-1}(p)\cap \D\}=1 \text{ for a.e.}~p \in f(\D), 
\ \int_E |\nabla f|^2 \, dxdy =0  \ \text{ and } \  \int_{\D} \JJ_{ f}(z) dxdy = {^dE^f}/2.
\]
The second equality above implies that $|\nabla f|^2(z)=0$ for a.e.~$z \in E$, and thus $|f_*(\omega)|^2(z)=0$ for a.e.~$\omega \in \Sp^1$ and a.e.~$z \in E$.
 The third equality implies that Jensen's inequality \eqref{jensen} must be an equality for a.e.~$z \in \D\backslash E$, and thus $\omega \mapsto |f_*(\omega)|^2(z)$ is a constant function for a.e.~$z \in \D \backslash E$. We therefore conclude that $f$ is almost conformal
 which in turn implies $f$ is non-degenerate by Lemma~\ref{nd_lemma} and discrete by
Lemma~\ref{discrete}.  Since $f$ is monotone and discrete, we conclude that $f$ is injective.
 This completes the proof of (c).}
\end{proof}

\subsection{Proof Theorem~\ref{Main3}} \label{final_section}
The strategy of the proof of Theorem~\ref{Main3} is as follows:  Using a triangulation, we construct a finite energy map which is not necessarily a homeomorphism. By Corollary \ref{cor:bubbling} and Theorem~\ref{sphere_cor}, we can find an almost conformal harmonic branched cover $u$ of $\Sp^2$. We then use $u$ to define an equivalence relation on $\Sp^2$ where $\QQ :=\Sp^2 \, /\sim $ is homeomorphic to $\Sp^2$. We use the natural projection map $\pi$ to construct a  complex atlas $\tilde \Aa$ on $\QQ$.  
The key to making this work is the following consequence of the proof of Proposition~\ref{unique_mobius}:   \emph{Given restrictions $u_1=u|_{U^{(1)}}$ and $u_2=u|_{U^{(2)}}$ of $u$ to  two connected components $U^{(1)}$ and $U^{(2)}$ of $\pi^{-1}(U)$,  the composition $u_2^{-1} \circ u_1$ is a biholomorphic map.} 
For $\text{id}$ defined such that $u = \pi \circ \text{id}$, we then use Lemma~\ref{coarea} and the local results to show that $\text{id}:\QQ \rightarrow (S,d)$ is the almost conformal harmonic homeomorphism with respect to the atlas $\tilde \Aa$.

\begin{proofMain3}
We will denote by $d_{\Sp^2}$ the induced distance function on $\Sp^2$ by the standard metric $g_{\Sp^2}$.
Since we are assuming that $S$ is homeomorphic to $\Sp^2$,  we can replace  $(S,d)$ by  $(\Sp^2,d)$   by pulling  back the distance function $d$  to $\Sp^2$ from $S$  by a homeomorphism.  

In the first three steps below, we construct a finite energy continuous map $f: \Sp^2 \rightarrow (\Sp^2,d)$ (not necessarily homeomorphic). In the fourth step, we use the map $u$ to demonstrate $\text{id}$, as defined above, is the almost conformal harmonic homeomorphism.\\
\\
{\sc Step 1}.  {\it Construct a sequence $\{\TT^n_0\}$ of triangulations on $\Sp^2$ such that each $\TT^n_0$  is a geodesic triangulation with respect to $d_{\Sp^2}$ and, for  the vertex set $\VV(\TT^n_0)$ of $\TT^n_0$,}
\begin{equation} \label{goodtriangulation}
 \max \{d_{\Sp^2}(v,v'): v,v' \in \VV(\TT^n_0) \text{ such that }v \text{ and } v' \text{ are adjacent}\} \rightarrow 0
 \ 
\text{ as $n\rightarrow \infty$.}
\end{equation}

To construct $\{\TT^n_0\}$, we start with the standard sequence of triangulations which refine the equilateral triangle inscribed in the unit disk. That is, let $\triangle$ be a (closed, two-dimensional) equilateral triangle inscribed in $\overline{\D} \subset \C$.  Let $\TT^0$ be the triangulation of $\triangle$ with only one face, namely $\triangle$ itself. Then let  $\TT^1, \TT^2, \dots$ be the sequence of triangulations of $\triangle$ defined inductively by the usual refinement; i.e.~the triangulation  $\TT^n$ is defined from $\TT^{n-1}$ by taking each face $F$ of $\TT^{n-1}$ (which is an equilateral triangle) and inscribing in it an equilateral triangle, with side length half that of $F$, and letting the four resulting equilateral triangles be faces of $\TT^n$.  

    We now transfer the triangulation $\{\TT^n\}$ to the unit disk. Let $\psi:\triangle \rightarrow \overline{\D}$ be a surjective map defined in the following manner. For $p \in \partial \triangle$, let $\psi(p) \in \partial \D$  be the point where the ray from origin through $p$ intersects the unit circle $\partial \D$. For any point on the line segment from $0$ to $p$, let $\psi(p)$ map linearly onto a line segment from $0$ to $\psi(p)$.  Then $\{\psi_*(\TT^n)\}$ is a triangulation of $\overline {\D}$.
    
Finally, we transfer the triangulation to $\Sp^2$. Let $\text{proj}^-: \overline{\D} \rightarrow \Sp^2_-=\{(x,y,z) \in \Sp^2:  z \leq 0\}$ be the restriction of the stereographic projection map $\text{proj}: \C \rightarrow \Sp^2 \backslash \{(1,0,0)\}$ and let $A:\Sp^2 \rightarrow \Sp^2$ be the antipodal map $A(x,y,z)=(-x,-y,-z)$.  Define the triangulation $\TT^n_0$ on $\Sp^2$ as follows: First, we   
 push forward the vertex set  $\VV(\psi_*(\TT^n))$ of $\psi_*(\TT^n)$ along with the adjacency relation  to $\Sp^2$ via $\text{proj}^-$ and via $A \circ \text{proj}^-$.   The new vertex set is the vertex set $\VV(\TT^n_0)$ of $\TT^n_0$.  Define the edge set $\EE(\TT^n_0)$ of $\TT^n_0$ to be the set of geodesics with respect to $d_{\Sp^2}$ between $v, v' \in \VV(\TT^n_0)$ whenever $v$ and $v'$ are adjacent.  (Note that we identify the vertices and edges that overlap on the equator $\{(x,y,z) \in \Sp^2:  z=0\}$.)  Thus,  $\TT^n_0$ is a geodesic triangulation of $\Sp^2$ with respect to $d_{\Sp^2}$.  Since 
 $\text{proj}^- \circ \psi$ and  $A \circ \text{proj}^- \circ \psi$  are Lipschitz maps, we have (\ref{goodtriangulation}).\\
 \\
 {\sc Step 2}. {\it  Show that }
 \begin{equation} \label{goodtriangulation2}
\max \{d(v,v'): v,v' \in \VV(\TT^n_0) \text{ such that } v \text{ and } v' \text{ are adjacent}\} \rightarrow 0
 \ 
\text{ as $n\rightarrow \infty$.}
\end{equation}

The claim (\ref{goodtriangulation2})  follows from the fact that  the metric topology induced by $d$ is equivalent to the surface topology of $\Sp^2$ (which is in turn equivalent to the metric topology induced by $d_{\Sp^2}$).   Indeed, assume on the contrary that there exists $\epsilon>0$,  an increasing sequence $\{n_i\} \in \N$ and $v_{n_i},v_{n_i}' \in \VV(\TT^{n_i}_0)$ such that $d(v_{n_i},v_{n_i}')\geq \epsilon$ and  $v_{n_i}$ adjacent to $v_{n_i}'$.  By taking a subsequence if necessary, we can assume  
$\{v_{n_i}\}$, $\{v'_{n_i}\}$ are  converging, i.e.~$v_{n_i} \rightarrow v_\infty$ and $v'_{n_i} \rightarrow v'_\infty$.  Thus, $d(v_\infty,v'_\infty) \geq \epsilon$.  
By the equivalence of the metric topology induced by $d$ and by   $d_{\Sp^2}$, there exists a geodesic ball $\BB^{d_{\Sp^2}}_\delta(v_\infty) \subset \BB^d_\epsilon(v_\infty)$.    This is a contradiction since $v'_\infty \notin \BB^d_\epsilon(v_\infty)$ but  $v'_\infty \in  \BB^{d_{\Sp^2}}_\delta(v_\infty)$ for sufficiently large $n \in \N$.\\
\\
{\sc Step 3.}  {\it Define a finite energy map $f: \Sp^2 \rightarrow (\Sp^2,d)$.}\\

To define $f$, observe that by \eqref{goodtriangulation2} and the equivalence of the metric topologies, for $n \in \N$ sufficiently large,  each face of the triangulation $\TT^n_0$ is contained in a closed geodesic ball (with respect to $d$)   which is a CAT($\kappa$) space.   
Fix such $n \in \N$.  Let $F$ be a (closed) face of $\TT^n_0$ and $T$ be a geodesic triangle with respect to $d$ with the same vertices as $F$.  Let  $f_F: {\partial F} \rightarrow \partial T$ be a constant speed parameterization (with respect to  $d_{\Sp^2}$ on $\partial F$ and $d$ on $\partial T$) with speed $L_F$.  By Reshetnyak's theorem \cite{reshetnyak3}, we can extend this boundary parameterization to a map $f_F:F \rightarrow T$ with Lipschitz bound of  $L_F$.  (More simply, we can define $f_F$ by fixing a vertex $v_0$ and an edge $E$ opposite of $v_0$ in $F$. We then extend $f_F$ by mapping the  line from $v_0$ to a point $p \in E$  to the geodesic from $f_F(v_0)$ to $f_F(p)$ by a constant speed parameterization. By the CAT($\kappa$) condition, the extension map has a Lipschitz bound of $L_F$.)   Finally, define a Lipschitz map $f:(\Sp^2,d_{\Sp^2}) \rightarrow (\Sp^2,d)$ by setting $f|_F=f_F$, which has a  Lipschitz bound of $L=\max \{L_F:  F \in \TT^n_0\}$.    Thus, $f:\Sp^2 \rightarrow (\Sp^2,d)$ is a finite energy map, although $f$ is not necessarily a homeomorphism.  Indeed,  it is possible that the images under $f$ of  two distinct open faces of $\TT^n_0$ intersect. \\
\\
{\sc Step 4.}
{\it Use  the analysis of almost conformal harmonic maps to construct an almost conformal harmonic homeomorphism.}\\
\\   
With the finite energy map $f:\Sp^2 \rightarrow (\Sp^2,d)$ as constructed above, we can apply Corollary~\ref{cor:bubbling} to assert the existence of a  harmonic map $u:\Sp^2 \rightarrow (\Sp^2,d)$.
By Theorem~\ref{sphere_cor},  $u$ is an almost conformal  branched cover.  Denote the  branch set of $u$ by $\BB$.  If $u$ is injective, then $u$ is an almost conformal harmonic homeomorphism.  This completes {\sc Step 4}, so we will   assume instead that $u|_{\Sp^2 \backslash \BB}$ is a $k$-sheeted cover of $\Sp^2 \backslash u(\BB)$ for $k>1$.

Define an equivalence relation on $\Sp^2$ by setting
\[
p \sim q \ \ \Leftrightarrow \ \ u(p) = u(q).
\]
Denote the quotient space $\Sp^2 \, /\sim$ by $\QQ$; i.e.~$\QQ$ is the set of equivalence classes $[ \cdot ]$. The topology on $\QQ$ is defined by requiring that $U \subset \QQ$ is open if and only if $\pi^{-1}(U)$ is open where 
\[
\pi:  \Sp^2 \rightarrow \QQ, \ \ \pi(p)=[p]
\]
is the natural projection map. 
The induced map $[p] \mapsto u(p)$  is essentially the identity map of $\Sp^2$ and thus we will denote it as 
\[
\text{id}:  \QQ  \rightarrow (\Sp^2,d).
\]Since $\id$ is a closed, continuous bijection,  $\QQ$ is a topological sphere.
In summary, we have the following commutative diagram:
\[
  \begin{tikzcd}
    \Sp^2 \arrow{d}[swap]{\pi} \arrow[swap]{dr}[swap]{u} & \\
    \arrow{r}[swap]{\text{id}} \QQ 
     & (\Sp^2,d) 
  \end{tikzcd}
\]  
The restriction $\pi|_{\Sp^2 \backslash \BB}$ is a $k$-sheeted cover of $\QQ \backslash \pi(\BB)$.

Let $\Aa$ be a complex structure on $\QQ \backslash \pi(\BB)$ which  makes $\pi|_{\Sp^2 \backslash \BB}$ a holomorphic covering map.  More precisely, we can define $\Aa$ as  follows: For $[p] \in \QQ \backslash \pi(\BB)$, let

\begin{itemize} 
\item  $U_{[p]}$  be a neighborhood  of $[p]$ in $\QQ \backslash \pi( \BB)$,
\item $\left\{U^{(i)}_{[p]}\right\}_{i=1, \dots, k}$  be the disjoint open sets of $\Sp^2$ such that $\bigcup_i U^{(i)}_{[p]} = \pi^{-1}(U_{[p]})$ and  $\pi\big|_{U^{(i)}_{[p]}}: U^{(i)}_{[p]} \rightarrow U_{[p]}$ is a homeomorphism, and
\item $\{\varphi^{(i)}_{[p]}: U^{(i)}_{[p]} \rightarrow \D \subset \C\}$ be complex charts  of $\Sp^2$.
\end{itemize}
The key observation we will use below is that 
\begin{equation} \label{ishol}
\left( \pi|_{U_{[p]}^{(i)}} \right)^{-1} \circ \pi|_{U_{[p]}^{(1)}} = \left( u|_{U_{[p]}^{(i)}} \right)^{-1} \circ u|_{U_{[p]}^{(1)}}
\mbox{ 
is a biholomorphic map.}
\end{equation}  
The validity of  (\ref{ishol}) follows from the fact that the right hand side of the equation can be shown to be holomorphic by the same argument as the proof of Proposition~\ref{unique_mobius} (cf.~\cite[Theorem 16]{gehring}).

For each $i=1, \dots, k$, define
\[
\bar \varphi^{(i)}_{[p]}:=\varphi^{(i)}_{[p]} \circ \left(\pi|_{U_{[p]}^{(i)}}\right)^{-1}:  U_{[p]} \rightarrow \D.
\]
We claim that the atlas 
\[
\Aa=\left\{\left(U_{[p]}, \bar \varphi^{(1)}_{[p]}\right)\right\}_{[p] \in \QQ \backslash \pi(\BB)}
\]
covering $\QQ \backslash \pi(\BB)$ defines  a complex structure on $\QQ \backslash \pi(\BB)$.  To see this, first note that 
\begin{equation} \label{bhm}
\bar \varphi^{(i)}_{[p]} \circ \left(\bar \varphi^{(1)}_{[p]}\right)^{-1} :\D \rightarrow \D \  \mbox{ is biholomorphic.}
\end{equation}
Indeed, (\ref{bhm}) follows from (\ref{ishol}) and the fact that
\begin{eqnarray*}
\bar \varphi^{(i)}_{[p]} \circ \left(\bar \varphi^{(1)}_{[p]}\right)^{-1}& = & 
\varphi^{(i)}_{[p]} \circ \left(\pi|_{U_{[p]}^{(i)}}\right)^{-1} \circ \left(\varphi^{(1)}_{[p]} \circ \left(\pi|_{U_{[p]}^{(1)}}\right)^{-1}\right)^{-1} 
\\
& = & \varphi^{(i)}_{[p]} \circ \left(\pi|_{U_{[p]}^{(i)}}\right)^{-1} \circ  \pi|_{U_{[p]}^{(1)}} \circ \left(\varphi^{(1)}_{[p]}\right)^{-1}.
\end{eqnarray*}

 If $U_{[p]} \cap U_{[q]} \neq \emptyset$, then there exists $i, j$ such that $U_{[p]}^{(i)} \cap U_{[q]}^{(j)} \neq \emptyset$.  Since 
 \[
 \bar \varphi_{[p]}^{(i)} \circ \left(  \bar \varphi_{[q]}^{(j)}\right)^{-1}
 =
 \varphi_{[p]}^{(i)} \circ \left(  \varphi_{[q]}^{(j)}\right)^{-1}
\mbox{ on $\varphi_{[q]}^j\left(U_{[p]}^{(i)} \cap U_{[q]}^{(j)}\right)$}
\]
and
\[
\bar \varphi_{[p]}^{(1)} \circ \left(\bar \varphi_{[q]}^{(1)} \right)^{-1} = \bar \varphi_{[p]}^{(1)} \circ \left(\bar \varphi_{[p]}^{(i)}\right)^{-1} \circ  \varphi_{[p]}^{(i)}      \circ \left(\varphi_{[q]}^{(j)} \right)^{-1}\circ  \bar \varphi_{[q]}^{(j)}\circ \left(\bar \varphi_{[q]}^{(1)}\right)^{-1},
\] 
we conclude 
\begin{equation}\label{transition1}
\bar \varphi_{[p]}^{(1)} \circ \left(\bar \varphi_{[q]}^{(1)} \right)^{-1}: \ \bar \varphi_{[q]}^{(1)}\left(U_{[p]}\cap U_{[q]}\right) \ \rightarrow \ \bar \varphi_{[p]}^{(1)}\left(U_{[p]}\cap U_{[q]}\right) \mbox{ is biholomorphic}.
\end{equation}
We have thus shown that the transition maps of $\Aa$ are holomorphic as required, and hence $\Aa$ is a complex atlas.

For any $[b] \in \pi(\BB)$, let  $U_{[b]}$ be a   neighborhood of $[b]$ such that a connected component $\UU \subset \Sp^2$ of $\pi^{-1}(U_{[b]})$   satisfies $\UU \cap \BB = \{b\}$ and $\UU^*:=\UU \backslash \{b\}$ is biholomorphic to $\D^*:=\D \backslash \{0\}$.  Let $U_{[b]}^*:=\pi(\UU^*)=U_{[b]} \backslash \{[b]\}$ and define
\[
\Aa_{[b]}=\left \{ \left(U_{[p]} \cap U_{[b]}^*, \bar \varphi_{[p]}^{(1)}|_{U_{[p]} \cap U_{[b]}^*}\right):\left(U_{[p]}, \bar \varphi_{[p]}^{(1)} \right)\in \Aa\right\}.
\]
In other words, $\Aa_{[b]}$ is the restriction of the complex charts of $\Aa$ to $U_{[b]}^*$,   and hence
defines a complex structure on $U_{[b]}^* \subset \QQ$.
Since $U_{[b]}^*$ is homeomorphic to an annulus and  $\pi|_{\UU^*}:\UU^* \simeq \D^* \rightarrow U_{[b]}^*$ is a holomorphic covering map with respect to the complex charts $\Aa_{[b]}$,  the Riemann surface $(U_{[b]}^*, \Aa_{[b]})$ is biholomorphic to $\D^*$; i.e.~there exists a homeomorphism  
  \[
  \phi_{[b]}:U_{[b]}^* \rightarrow \D^*
  \]
such that, for any chart  $ \left(U_{[p]} \cap U_{[b]}^*, \bar \varphi_{[p]}^{(1)}|_{U_{[p]} \cap U_{[b]}^*}\right) \in \Aa_{[b]}$,
\begin{equation} \label{phib}
\phi_{[b]} \circ   (\bar \varphi_{[p]}^{(1)})^{-1}: 
\bar   \varphi_{[p]}^{(1)}\left(U_{[p]} \cap U^*_{[b]}\right) \   \rightarrow \  \phi_{[b]} \left(U_{[p]} \cap U^*_{[b]}\right)\mbox{is biholomorphic.}
  \end{equation}
  Extend $\phi_{[b]}$ to a homeomorphism 
   \[
   \bar \phi_{[b]}: U_{[b]} \rightarrow \D.
   \]  
The atlas $\widetilde{\Aa}:=\Aa \cup \left\{\left(U_{[b]}, \bar \phi_{[b]}\right)\right\}_{b \in \pi(\BB)}$ defines a complex structure  which makes $\QQ$ into  the Riemann sphere $\Sp^2$.  Indeed, \eqref{transition1} and (\ref{phib}) show that the transition maps in $\tilde \Aa$ are biholomorphic.  

With the complex structure on $\QQ$ defined by $\widetilde{\Aa}$, the homeomorphism 
\[
\text{id}: \Sp^2 \simeq \QQ \rightarrow (\Sp^2,d)
\]
 is an almost conformal harmonic map. To see this, first note that $\pi|_{U_{[p]}^{(1)}}$ is a biholomorphic map in the coordinate neighborhood $U_{[p]}$  for any $p \notin \BB$ and  $u=\text{id} \circ \pi$ is an almost conformal harmonic map. Therefore, $\text{id}$ is an almost conformal harmonic map in $\QQ \backslash \pi(\BB)$.   Thus, by the removable singularities theorem (cf.~\cite[Section 3]{paper2}), $\text{id}$ is an almost conformal harmonic map on $\Sp^2 \simeq \QQ$.  Lemma~\ref{coarea} (b) and  Theorem~\ref{sphere_cor} imply that $\text{id}$ is a conformal harmonic homeomorphism satisfying $\HH^2(\text{id}(\Sp^2)) = {^dE^{\text{id}}[\Sp^2]}/2$.  Uniqueness follows from Proposition~\ref{unique_mobius}.
\end{proofMain3}

\appendix

\section{The order function}\label{App_order}
We use the notation of \cite{BFHMSZ}, which differs slightly from notation within this paper. The interested reader will have an easier time checking the details of the proofs herein as they relate to the work in \cite{BFHMSZ}. Note that the role of $\sigma_j\to 0$ in this paper is replaced by $\lambda_j \to 0$ below. Also $\mathcal CX$ denotes the NPC cone over $X$.
\begin{lemma}\label{order_equal}
Let $u:B_1(0) \to (X,d)$ be a finite energy harmonic map where $B_1(0)\subset M$, $(M,g)$ is a Riemannian manifold and $(X,d)$ is a locally CAT($\kappa$) space. Let $x \in M$ and let $\overline u_*:B_1(0) \to (X_*,d_*)$ denote a tangent map of $u$ at $x$ as constructed in \cite[Proposition 7.5]{BFHMSZ}. Then
\[
\mathrm{ord}^{u_*}(0)=\alpha_*(0) = \alpha(x)= \mathrm{ord}^u(x).
\]
\end{lemma}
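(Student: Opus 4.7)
The plan is to reduce the lemma to two ingredients: (a) scale-invariance of the Almgren frequency ratio under the blow-up rescaling, which gives $\mathrm{ord}^{u_{\lambda_j}}(0) = \mathrm{ord}^u(x) = \alpha(x)$ for every $j$; and (b) passage of this ratio to the limit along the pullback-convergent sequence $u_{\lambda_j} \to \overline{u}_*$.

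For (a), writing $\mathrm{Ord}^u(x, r) := r \int_{B_r(x)}|\nabla u|^2\,dxdy \,/\, \int_{\partial B_r(x)} d^2(u, u(x))\,ds$, the change of variables $y = \lambda z$ in both integrals, combined with the target rescaling $d_\lambda = \mu_\lambda^{-1} d$, gives $\int_{B_r(0)}|\nabla u_\lambda|^2 = \mu_\lambda^{-2} \int_{B_{\lambda r}(x)}|\nabla u|^2$ and $\int_{\partial B_r(0)} d_\lambda^2(u_\lambda, u_\lambda(0))\,ds = \mu_\lambda^{-2}\lambda^{-1} \int_{\partial B_{\lambda r}(x)} d^2(u, u(x))\,ds$. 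The factors of $\mu_\lambda^{-2}$ cancel in the ratio and the remaining $\lambda$ combines with the $r$ in the numerator, yielding
$$\mathrm{Ord}^{u_\lambda}(0, r) = \mathrm{Ord}^u(x, \lambda r).$$
Sending $r \to 0$ with $\lambda$ fixed gives $\mathrm{ord}^{u_\lambda}(0) = \mathrm{ord}^u(x) = \alpha(x)$.

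For (b), I would fix a generic $r \in (0, 1)$ and argue that both integrals defining $\mathrm{Ord}^{u_{\lambda_j}}(0, r)$ converge to those defining $\mathrm{Ord}^{\overline{u}_*}(0, r)$. The boundary integrals converge because $d_{\lambda_j}(u_{\lambda_j}(\zeta), u_{\lambda_j}(0)) \to d_*(\overline{u}_*(\zeta), \overline{u}_*(0))$ pointwise in $\zeta \in \partial B_r$ by pullback convergence (cf.~Definition~\ref{clu}), and the integrands are dominated by a constant multiple of the Lipschitz bound supplied by Theorem~\ref{lip}. The energy integrals $\int_{B_r} |\nabla u_{\lambda_j}|^2 \to \int_{B_r}|\nabla \overline{u}_*|^2$ for almost every $r$: although weak convergence of Korevaar--Schoen energy densities only yields lower semicontinuity in general, the near-minimization structure of the blow-ups built into the BFHMSZ construction \cite[Prop.~7.5]{BFHMSZ} upgrades this to equality outside a null set of radii on which mass might concentrate on $\partial B_r$.

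Combining (a) and (b) yields $\mathrm{Ord}^{\overline{u}_*}(0, r) = \lim_j \mathrm{Ord}^{u_{\lambda_j}}(0, r) = \lim_j \mathrm{Ord}^u(x, \lambda_j r) = \alpha(x)$ for almost every $r \in (0, 1)$. Since $\overline{u}_*$ is homogeneous of degree $\alpha(x)$ by \eqref{hg}, the frequency ratio $\mathrm{Ord}^{\overline{u}_*}(0, \cdot)$ is in fact constant in $r$, so $\mathrm{ord}^{\overline{u}_*}(0) = \alpha(x)$, which proves the chain of equalities. The main technical obstacle is the energy convergence in (b): weak $L^1$ convergence alone only yields $\int_{B_r}|\nabla \overline{u}_*|^2 \le \liminf_j \int_{B_r}|\nabla u_{\lambda_j}|^2$, and the matching upper bound relies crucially on the fact that both $u_{\lambda_j}$ and $\overline{u}_*$ are (near-)minimizers in the BFHMSZ pullback framework.
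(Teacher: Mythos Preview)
Your strategy matches the paper's: both reduce to (a) scale-invariance of the frequency, which is straightforward, and (b) convergence of the energy integrals along the blow-up sequence, which is the real content. You correctly flag (b) as the crux, but you stop short of supplying it; the paper's proof is essentially devoted to filling exactly this step.

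Two points deserve attention. First, you work directly with $u_{\lambda_j}:\D\to(X,d_{\lambda_j})$, but these targets are CAT($\mu_{\lambda_j}^2\kappa$), not NPC, and the Korevaar--Schoen convergence theorem \cite[Theorem 3.11]{korevaar-schoen2} that yields energy-density convergence is formulated for NPC targets. The paper handles this by passing to the lifts $\overline u_k:=\iota\circ u_{\lambda_k}$ into the metric cone $(\mathcal C X,D_k)$, which is NPC and has the same energy densities. Second, invoking ``near-minimization built into \cite[Prop.~7.5]{BFHMSZ}'' is not enough on its own: that proposition gives near-minimization with respect to the varying metrics $g_k(x)=g(\lambda_k x)$, whereas \cite[Theorem 3.11]{korevaar-schoen2} requires the hypotheses on a \emph{fixed} domain metric. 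The paper therefore proves two claims: (i) $\overline u_k$ is $\epsilon_k$-minimizing with respect to the Euclidean metric $\delta$ (obtained by sandwiching $g_k$ between $(1\pm c\lambda_k)\delta$), and (ii) a uniform annular energy bound ${^{D_k}}E^{\overline u_k}_\delta(B_1\setminus B_{1-t})\le Ct$ (obtained from the uniform Lipschitz estimate on $B_{3/2}$). Claim (ii) is the ``no boundary concentration'' condition you allude to, but it has to be checked, not assumed. Once these are in hand, \cite[Theorem 3.11]{korevaar-schoen2} gives the energy convergence you need, and the rest of your argument goes through.
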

\begin{proof}
By the proof of \cite[Lemma 8.1]{BFHMSZ}, it is enough to show that 
\begin{equation}\label{energy_convergence}
{^{d_*}}E^{\overline u_*}(\sigma) = \lim_{k \to \infty} \, {^{D_k}}E^{\overline u_k}_{g_k}(\sigma)
\end{equation}where 
\begin{align*}
u_k(x)&:= u(\lambda_k x)\\
\overline u_k(x)&:= [u_k(x),1] \in X \times \{1\} \subset \mathcal CX\\
g_k(x)&:= g(\lambda_k x)\\
d_k(x)&:= (\lambda_k^{1-n} I_k)^{-1/2}d(p,q)\\
D_k(p,q)&:= (\lambda_k^{1-n} \overline I_k)^{-1/2}D(p,q)\\
I_k&:= \inf_{q \in X}\int_{\partial B_{\lambda_k}(0)}d_k^2(u,q) d\Sigma_g\\
\overline I_k&:= \inf_{q \in \mathcal CX}\int_{\partial B_{\lambda_k}(0)}D_k^2([u,1],q) d\Sigma_g.
\end{align*}By \cite[Proposition 7.5]{BFHMSZ}, $\overline u_k$ converges locally uniformly in the pullback sense to $\overline u_*$. Since $\overline u_k$ maps into the NPC metric space $(\mathcal CX, D_k)$, it suffices to prove that $\overline u_k$ satisfies the hypotheses of \cite[Theorem 3.11]{korevaar-schoen2} when we consider $\overline u_k$ defined on a domain with a fixed metric. (The conclusion of this theorem gives the convergence of energy density measures.) To that end, we prove the following two claims for $\overline u_k:(B_1(0), \delta) \to (\mathcal CX, D_k)$, where $\delta$ is the Euclidean metric.

\begin{claim}For $k$ large enough, $\overline u_k$ is within $\epsilon_k$ of minimizing on $(B_1, \delta)$ with $\lim_{k \to \infty} \epsilon_k=0$.
\end{claim}
\begin{proof}
Let $v_k= {^{Dir}}\overline u_k:B_1 \to (\mathcal CX, D_k)$ be the Dirichlet solution for $\overline u_k$, but with respect to the Euclidean metric $\delta$. We normalize the metric $g_k$ (and continue to refer to it as $g_k$ for convenience) and we recall that the normalization preserves energy. In particular, $u_k$ is still minimizing with respect to the normalization. Since $g_k$ is smooth, there exists $c>0$ such that for all $\Omega \subset B_1$, 
\begin{equation}\label{deltavsg}
 (1-c\lambda_k) {^{D_k}}E^{\overline u_k}_{\delta}[\Omega]\leq {^{D_k}}E^{\overline u_k}_{g_k}[\Omega] \leq (1+c\lambda_k) {^{D_k}}E^{\overline u_k}_{\delta}[\Omega].
\end{equation}Note that the same string of inequalities holds for $v_k$ as well. 
It follows that 
\begin{align*}
{^{D_k}}E^{\overline u_k}_{\delta}[B_1] &\leq (1-c\lambda_k)^{-1}{^{D_k}}E^{\overline u_k}_{g_k}[B_1] \mbox{ by } \eqref{deltavsg}\\
& \leq  (1-c\lambda_k)^{-3}{^{D_k}}E^{v_k}_{g_k}[B_1] \mbox{ by the proof of \cite[Proposition 7.5]{BFHMSZ}} \\
&\leq (1-c\lambda_k)^{-3}(1+c\lambda_k) {^{D_k}}E^{v_k}_{\delta}[B_1] \mbox{ by } \eqref{deltavsg}\\
& \leq (1+ C\lambda_k){^{D_k}}E^{v_k}_{\delta}[B_1]\\
& \leq {^{D_k}}E^{v_k}_{\delta}[B_1] + 2C\lambda_k {^{d_*}}E^{\overline u_*}[B_1]\mbox{ by the proof of \cite[Proposition 7.5]{BFHMSZ}}.
\end{align*}
\end{proof}

\begin{claim}There exists $C>0$ independent of $t>0$ and of $k$ such that
\[
{^{D_k}}E^{\overline u_k}_{\delta}(B_1 \backslash B_{1-t}) \leq Ct.
\]

\end{claim} 
\begin{proof}
Note that for $k$ large enough, $u|_{B_{2\lambda_k}}$ is minimizing and therefore $u_k|_{B_{2}}$ is minimizing. 
By \cite[Proposition 8.2]{BFHMSZ} and the proof of \cite[Lemma 7.5]{BFHMSZ}, there exists a constant $C'>0$ independent of $k$ such that for $x,y \in B_{3/2}$, 
\[
D_k(\overline u_k(x),\overline u_k(y))\leq 2  d_k(u_k( x), u_k( y)) \leq C'|x-y| .
\]It follows that $\overline u_k$ is Lipschitz on $B_{3/2}$ with constant $C'$ independent of $k$. Therefore
\[
{^{D_k}}E^{\overline u_k}_{g_k}(B_1 \backslash B_{1-t}) \leq Ct.
\] where $C$ depends only on $C'$ and the dimension of $M$. The result follows with $\delta$ in place of $g_k$ by the estimate \eqref{deltavsg}. 

\end{proof}The two claims imply that $\overline u_k$ satisfy the hypotheses of \cite[Theorem 3.11]{korevaar-schoen2} and thus 
\[
{^{d_*}}E^{\overline u_*}(\sigma) = \lim_{k \to \infty} \, {^{D_k}}E^{\overline u_k}_{\delta}(\sigma).
\]Applying \eqref{deltavsg} then implies \eqref{energy_convergence}.
\end{proof}

\begin{lemma}\label{order_upper}Let $(M,g)$ be a Riemannian manifold and $(X,d)$ be a locally compact CAT($\kappa$) space. Let $u:(M,g) \to (X,d)$ be a harmonic map. Then
the order function $\alpha_u$ is upper semi-continuous.
\end{lemma}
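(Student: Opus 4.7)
The strategy is to exhibit $\alpha_u$ as an infimum of continuous functions, which is automatically upper semi-continuous. For each $x \in M$, define the $\sigma$-order functional
\[
N^u(x, \sigma) := \frac{\sigma \displaystyle\int_{B_\sigma(x)} |\nabla u|^2 \, d\mu_g}{\displaystyle\int_{\partial B_\sigma(x)} d^2(u, u(x)) \, d\Sigma_g}.
\]
I would first invoke the Korevaar--Schoen--type monotonicity formula for harmonic maps into locally CAT($\kappa$) spaces (compare \cite[Proposition 6.5, Section 8]{BFHMSZ} and \cite[Section 2]{korevaar-schoen2}): there exist constants $c \geq 0$ and $\sigma_0 > 0$, depending only on $\kappa$ and the local geometry of $(M,g)$ near a fixed compact neighborhood of $x_0$, such that for every $x$ in that neighborhood the map $\sigma \mapsto e^{c\sigma} N^u(x,\sigma)$ is non-decreasing on $(0,\sigma_0)$. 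Consequently
\[
\alpha_u(x) = \lim_{\sigma \to 0^+} e^{c\sigma} N^u(x,\sigma) = \inf_{\sigma \in (0,\sigma_0)} e^{c\sigma} N^u(x,\sigma).
\]

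Next, I would fix $\sigma \in (0,\sigma_0)$ and show that $x \mapsto N^u(x, \sigma)$ is continuous in a neighborhood of $x_0$. The numerator is continuous because $|\nabla u|^2 \in L^1_{\mathrm{loc}}(M)$ and $\mathrm{vol}_g\bigl(B_\sigma(x_k) \triangle B_\sigma(x_0)\bigr) \to 0$ as $x_k \to x_0$. For the denominator, use normal coordinates at $x_0$ and the exponential map $\exp_{x_0}$ to parametrize $\partial B_\sigma(x_k)$ as a smooth perturbation of $\partial B_\sigma(x_0)$; combined with the local Lipschitz continuity of $u$ (Theorem \ref{lip}) and dominated convergence, one obtains
\[
\int_{\partial B_\sigma(x_k)} d^2\bigl(u,u(x_k)\bigr) \, d\Sigma_g \ \longrightarrow \ \int_{\partial B_\sigma(x_0)} d^2\bigl(u,u(x_0)\bigr) \, d\Sigma_g
\]
(note the denominator is strictly positive for $\sigma$ small since otherwise $u$ would be constant on $\overline{B_\sigma(x_0)}$, forcing $\alpha_u(x_0) = +\infty$, a trivial case).

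Finally, given $x_k \to x_0$, for each fixed $\sigma \in (0,\sigma_0)$ the monotonicity representation and the continuity just established yield
\[
\limsup_{k \to \infty} \alpha_u(x_k) \ \leq \ \limsup_{k\to\infty} e^{c\sigma} N^u(x_k, \sigma) \ = \ e^{c\sigma} N^u(x_0,\sigma).
\]
Letting $\sigma \to 0^+$ and using $\alpha_u(x_0) = \inf_\sigma e^{c\sigma}N^u(x_0,\sigma)$ gives $\limsup_{k\to\infty} \alpha_u(x_k) \leq \alpha_u(x_0)$, establishing upper semi-continuity.

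The main technical point is the continuity of the boundary integral in the denominator, since the sphere $\partial B_\sigma$ varies with its center and one must control both the parametrization and the trace of $u$ simultaneously. Once this is handled by a straightforward normal-coordinate perturbation together with the Lipschitz bound from Theorem \ref{lip}, the rest of the argument is the classical ``infimum of continuous functions'' trick; local compactness of $X$ enters only indirectly, through the target estimates needed for the underlying monotonicity formula.
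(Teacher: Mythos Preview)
Your proposal is correct and follows essentially the same approach as the paper: represent $\alpha_u$ as the decreasing limit (equivalently, infimum) of a one-parameter family of continuous functions via the monotonicity formula, and conclude upper semi-continuity. The paper's argument is terser: it cites directly that $\sigma \mapsto \frac{\sigma F_x(\sigma)}{I_x(\sigma,Q_\sigma)}$ is nondecreasing from \cite[Section~6]{BFHMSZ} (where $F_x$ is the extrinsic energy and $I_x(\sigma,Q_\sigma)=\inf_{q}\int_{\partial B_\sigma(x)}d^2(u,q)\,d\Sigma_g$), and then uses $E_x/F_x \to C_1$ from \cite[Lemma~6.3]{BFHMSZ} to pass back to the intrinsic order. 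You instead work with $N^u(x,\sigma)$ built from $E_x(\sigma)$ and $\int_{\partial B_\sigma(x)} d^2(u,u(x))$, together with an $e^{c\sigma}$ correction. This is the formulation one usually sees for NPC targets; in the CAT($\kappa$) setting the quantity for which monotonicity is actually established in \cite{BFHMSZ} is the $F/I$ ratio above, so you should either cite that form directly or verify that your $e^{c\sigma}N^u$ version is a consequence of it. Your explicit verification that $x\mapsto N^u(x,\sigma)$ is continuous for fixed $\sigma$---which the paper leaves implicit---is a useful addition; note that the denominator in the paper's version involves minimizing over $q\in X$, and local compactness of $X$ is what guarantees the minimizer $Q_\sigma$ exists, so continuity in $x$ there requires a word as well.
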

\begin{proof}
It is enough to show that $\alpha_u$ is the decreasing limit of continuous functions. By definition,
\[
\alpha_u(x)=\lim_{\sigma \to 0^+} \frac{\sigma E_x(\sigma)}{I_x(\sigma,Q_\sigma)}= \lim_{\sigma \to 0^+} \frac{\sigma F_x(\sigma)}{I_x(\sigma,Q_\sigma)} \lim_{\sigma \to 0^+} \frac{E_x(\sigma)}{F_x(\sigma)}
\]where $E_x(\sigma), F_x(\sigma), I_x(\sigma, Q_\sigma)$ are defined in \cite[Section 6]{BFHMSZ} and the subscript ``$x$" signifies the centering of each ball at $x\in M$. By \cite[Lemma 6.3]{BFHMSZ}, $ \lim_{\sigma \to 0^+} \frac{E_x(\sigma)}{F_x(\sigma)}=C_1<\infty$ and thus
\[
\alpha_u(x)=C_1\lim_{\sigma \to 0^+} \frac{\sigma F_x(\sigma)}{I_x(\sigma,Q_\sigma)}.
\]In \cite[Section 6]{BFHMSZ} it is verified that $\frac{\sigma F_x(\sigma)}{I_x(\sigma,Q_\sigma)}$ is monotone nondecreasing in $\sigma$. Therefore, $\alpha_u$ is upper semi-continuous.
\end{proof}

\section{Proof of Proposition~\ref{coneovercurve}} \label{appB}

\begin{proof}
Let $r>0$ be as in Remark~\ref{3properties}.
The strategy is to first show  that $\partial \BB_\epsilon(q_0)$ is homeomorphic to a circle for $\epsilon \in (0,r)$.  Using this, we will then show that the space of directions $\EE_{q_0}$ is homeomorphic to a circle. Local compactness then implies that $\EE_{q_0}$ is isometric to a simple closed curve of finite length.

We can assume that $r>0$ is chosen sufficiently small such that  there exists a homeomorphism 
 $
 h: \D \rightarrow \BB_r(q_0)$.  Without the loss of generality, we may assume $h(0)=q_0$. Fix $\epsilon \in (0,r)$.     Let 
 \[
 \overline \B_\epsilon  =h^{-1}(\overline{\BB_\epsilon(q_0)})
 \]
and
\[
\delta=\inf_{x \in \partial \D, y \in \overline  \B_\epsilon} |x-y|.
\]
Since  $\partial \D$ and $\overline  \B_\epsilon$ are compact sets, $\delta>0$.  Thus,  $\D_{1-\frac{\delta}{2}}$  contains $\overline  \B_\epsilon$.
Pull back the distance function on $S$ to $\D$ via $h$ and still denote it by $d$.  Thus, $(\D,d)$ has the same properties (i.e.~uniqueness, continuity and extendability of geodesics) as $\BB_r(q_0)$.    Throughout  this proof, we adopt the following notation:
 \begin{itemize}
 \item $\gamma_Q$ is the geodesic from 0 to a point $Q$.
 \item $\gamma_{PQ}$ is the geodesic from a point $P$ to a point $Q$. 
 \end{itemize}
 
We will prove that $\partial \B_{\epsilon}$ is homeomorphic to a circle by showing that $\partial \B_\epsilon$ is path connected and $\partial \B_\epsilon \backslash \{q_1,q_2\}$ is disconnected for any $q_1,q_2 \in \partial \B_\epsilon$, $q_1 \neq q_2$.  Indeed, these two properties  characterize $\partial  \B_\epsilon$ as a topological circle by ~\cite{moore}.  \\
\\
$\bullet$  {\sc Proof that $\partial \B_\epsilon$ is connected.}\\

The nearest point projection (with respect to the metric $d$)
\[
\pi_\epsilon: \partial \D_{1-\frac{\delta}{2}} \rightarrow \partial \B_\epsilon=h^{-1}(\partial \BB_\epsilon(q_0))
\]
is well-defined.  Indeed, for any $Q \in \partial \D_{1-\frac{\delta}{2}}$, the unique geodesic $\gamma_Q$ from $0$ to $Q$ intersects a unique point in $\partial \B_\epsilon$.  
The map $\pi_\epsilon$ is continuous by  the CAT($\kappa$) property (cf.~\cite[II.1.7]{bridson-haefliger}).  For $q \in \partial \B_\epsilon$, property (i) implies that the    geodesic $\gamma_q$  can be extended to a  geodesic $\gamma_Q$ for  $Q \in \partial \D_{1-\frac{\delta}{2}}$.   This in turn implies that $\pi_\epsilon(Q)=q$, thereby proving $\pi_\epsilon$ is surjective.  Let $q_1,q_2 \in \partial \B_\epsilon$.  By surjectivity of $\pi_\epsilon$, there exist $Q_1,Q_2 \in \partial \D_{1-\frac{\delta}{2}}$ such that $\pi_\epsilon(Q_1)=q_1$ and $\pi_\epsilon(Q_2)=q_2$.  Let $\bar A\subset \partial \D_{1-\frac{\delta}{2}}$ be a closed arc connecting $Q_1$ to $Q_2$.  By the continuity of $\pi_\epsilon$,  $\pi_\epsilon(\bar A)$ is a path from $q_1$ to $q_2$.  This proves $\partial \B_\epsilon$ is path connected.\\
\\
$\bullet$ {\sc  Proof that $\partial \B_\epsilon \backslash \{q_1,q_2\}$ is disconnected for $q_1, q_2 \in \partial \B_\epsilon$, $q_1 \neq q_2$.} \\

For $i=1,2$, extend the geodesic $\gamma_{q_i}$ to a geodesic $\gamma_{Q_i}$ with $Q_i \in \partial \D_{1-\frac{1}{2}}$.  Let $q$ be the point on $\gamma_{Q_1}  \cap \gamma_{Q_2}$ furthest away from $0$.  
Thus, $\gamma_{q} \cup \gamma_{qQ_i} = \gamma_{Q_i}$ and 
\begin{equation} \label{ivt}
d(0,q) < \epsilon<d(0,Q_1)
\end{equation} since $\pi_\epsilon(Q_1)=q_1 \neq q_2 =\pi_\epsilon(Q_2)$ and $d(0,q_1)=\epsilon$.   Let $A$ and $A'$ be the  two distinct open arcs of $\partial \D_{1-\frac{\delta}{2}} \backslash \{Q_1,Q_2\}$.  The simple closed  curve $\Gamma=\gamma_{qQ_1} \cup \gamma_{qQ_2}\cup  A$ bounds a topological disk which we denote by $\DD$.   (Note that from Schoenflies theorem,  there exists a homeomorphism $f:\R^2 \rightarrow \R^2$ such that $f(\Gamma)$ is the unit circle in $\R^2$.) Similarly, the simple closed  curve  $\Gamma'=\gamma_{qQ_1} \cup \gamma_{qQ_2}\cup A'$ bounds another topological disk which we denote by $\DD'$.  

Contrary to the claim, assume $\partial \B_\epsilon \backslash \{q_1,q_2\}$ is connected.  Then either $\partial \B_\epsilon \backslash \{q_1,q_2\} \subset \DD$ or $\partial \B_\epsilon \backslash \{q_1,q_2\} \subset \DD'$.  Relabeling if necessary, assume the latter which implies that $\partial \B_\epsilon \cap \DD=\emptyset$.  On the other hand, $\DD$ is a topological disk, and thus we can choose a curve $\sigma$ from $q \in \Gamma = \partial \DD$ to $Q_1 \in \Gamma= \partial \DD$ whose interior is contained in $\DD$.  By the intermediate value theorem and (\ref{ivt}), there exists  $p \in \sigma$ such that $d(0,p)=\epsilon$.  Thus, $p \in \partial \B_\epsilon \cap \DD$ which is a  contradiction to the fact that $\partial \B_\epsilon \cap \DD = \emptyset$.   This proves $\partial \B_\epsilon \backslash \{q_1,q_2\}$ is disconnected. \\

Now that we have shown $\partial \B_\epsilon=h^{-1}(\partial \BB_\epsilon(q_0))$ is a topological circle, we will use this fact  to show  $\EE_{q_0}$ is also a topological circle.  We will use the map
\[
\LL:  \partial \B_\epsilon \rightarrow \EE_{q_0}, \ \ Q \mapsto [\gamma_Q]
\]  
to show that $\EE_{q_0}$ is connected and $\EE_{q_0} \backslash \{[\gamma_{Q_1}], [\gamma_{Q_2}]\}$ is disconnected if  $[\gamma_{Q_1}] \neq [\gamma_{Q_2}]$.\\
\\
$\bullet$ {\sc Proof that $\EE_{q_0}$ is connected.}
\\
\\
The map $\LL$ is continuous and surjective by  properties (ii) and (iii) which implies that $\EE_{q_0}$ is path connected.  Indeed, let $\sigma$ be a path from $Q_1$ to $Q_2$ in $\partial \BB_\epsilon$.  Then $\LL(\sigma)$ is a path from $[\gamma_{Q_1}]$ to $[\gamma_{Q_2}]$.
\\
\\
$\bullet$ {\sc Proof that $\EE_{q_0} \backslash \{[\gamma_{Q_1}], [\gamma_{Q_2}]\}$ is disconnected if $[\gamma_{Q_1}] \neq [\gamma_{Q_2}]$.}\\
\\
This proof will consist of two steps. First,  we will show that $\LL$ is a monotone map; i.e.~$\LL^{-1}([\gamma_{Q}])$ is connected for any $[\gamma_{Q}] \in \EE_{q_0}$ (cf.~{\sc Claim 1}).  This implies that $\partial \B_\epsilon \backslash (\LL^{-1}([\gamma_{Q_1}]) \cup \LL^{-1}([\gamma_{Q_2}]))=U \cup U'$ where $U,U'$ are distinct open sets.  Second, we will prove that $\LL(U)$ and $\LL(U')$ are open arcs (cf.~{\sc Claim 2}). This proves   $\EE_{q_0} \backslash \{[\gamma_{Q_1}], [\gamma_{Q_2}]\}$ is disconnected.  \\
\\
{\sc Claim 1.} $\LL$ is  monotone.  \\
\\
{\sc Proof}.   
Let $[\gamma_{Q}] \in \EE_{q_0}$ with $Q \in \partial \B_\epsilon$,  and let $P \in \LL^{-1}([\gamma_{Q}])$ with $Q \neq P$.  Thus, $\angle (\gamma_{Q}, \gamma_{P})=0$.  Since $\partial \B_\epsilon$ is a topological circle, there exist exactly  two distinct  connected open arcs of $\partial \B_\epsilon \backslash \{Q,P\}$ which we call $A$ and $A'$. 
  It is sufficient to show that  one of $\bar A$ or $\overline {A'}$ is contained in $\LL^{-1}([\gamma_{Q}])$.  We do this by letting $q$ be the point on $\gamma_{Q} \cap \gamma_{P}$ furthest away from $0$ and considering the following two cases separately.  \\

{\sc Case 1}:  $q \neq 0$.  Let $\DD$ be the  topological disk  bounded by the simple closed curve  $\gamma_{qQ} \cup \gamma_{qP} \cup A$, and let $\DD'$ be the topological disk  bounded by  the simple closed curve  $\gamma_{qQ} \cup  \gamma_{qP} \cup A'$.  By relabeling if necessary, we will assume that  $\gamma_{q} \subset \overline{\DD'}$ and $\gamma_{q} \cap \DD=\emptyset$.   
For $Q' \in \bar A \subset \partial \B_\epsilon$, we observe that  $\gamma_{Q'}=\gamma_{q} \cup \gamma_{qQ'}$ with $\gamma_{qQ'} \subset \overline \DD$.   Since $ \gamma_{q} \subset \gamma_Q \cap \gamma_{Q'}$, we conclude $\angle(\gamma_Q, \gamma_{Q'})=0$.  In other words, $\gamma_{Q'} \in [\gamma_{Q}]$ which implies $[\gamma_{Q'}] \in \LL^{-1}([\gamma_{Q}])$ for all $Q' \in \bar A$.  This implies that $\bar A \subset \LL^{-1}([\gamma_{Q}])$ and $\bar A$ is a path from $Q$ to $P$ in $\LL^{-1}([\gamma_{Q}])$.\\

{\sc Case 2}: $q=0$. For $t \in (0,\epsilon]$, let $t \mapsto Q(t)$ (resp.~$P(t)$) be the arclength parameterization of $\gamma_Q$ (resp.~$\gamma_P$).  The assumption that $q =0$ implies  $Q(t) \neq P(t)$ for all $t \in (0,\epsilon]$.   Let $\gamma_t$ be the geodesic from $Q(t)$ to $P(t)$.  Fix $t_0 \in (0,\epsilon)$ and a point  $q_0' \in \gamma_{t_0} \backslash \{Q(t_0),P(t_0)\}$. Let $\gamma_{Q_0'}$ for $Q_0' \in \partial \B_\epsilon$ be the geodesic extension of $\gamma_{q_0'}$.  By relabeling if necessary, assume $Q_0' \in A$.  
Let $\DD$ be the topological disk bounded by the simple closed curve $\gamma_Q \cup \gamma_P \cup A$.  Then $\bar \DD$ is geodesically  convex.  

Fix any $Q' \in \bar A$ and $t \in (0,\epsilon)$.  Observe that   $\DD \backslash \gamma_t$ equals two open sets  $\DD_1$ and $\DD_2$ with $0 \in \overline{\DD_1}$ and   $Q' \in \overline {\DD_2}$.   Since  $\gamma_{Q'} \subset \overline \DD$, $\gamma_{Q'}$ must intersect $\gamma_t$.  Furthermore,  $\gamma_t$ cannot intersect $\gamma_{Q'}$ at  more than one point because of geodesic uniqueness. Thus, $\gamma_{Q'} \cap \gamma_t$ contains exactly one point which we denote by $q_t'$.

We now consider the geodesic triangle $\triangle 0Q(t)q_t'$ (with vertices $0$, $Q(t)$ and $q_t'$).  We claim that $d(Q(t),q_t')$, the  length of the side opposite to the vertex, is equal to  $o(t)$ while   $d(0,Q(t))$ and $d(0,q_t')$,  the lengths of the adjacent sides to the vertex 0, is equal to $O(t)$.  Indeed, since 
\[
0=\angle (\gamma_{Q}, \gamma_{P})= \lim_{t \rightarrow 0} \tilde \angle (Q(t),P(t)),
\]
we have
\[
\lim_{t \rightarrow 0} \frac{d(Q(t), q_t')}{t}  \leq \lim_{t \rightarrow 0} \frac{d(Q(t),P(t))}{t}=0.
\]
Furthermore, $d(0,Q(t))=t$ by definition.  Thus, by the triangle inequality,
\[
1= \lim_{t \rightarrow 0}  \frac{d(Q(t),0)-d(Q(t) ,q_t')}{t} \leq \lim_{t \rightarrow 0} \frac{d(q_t',0)}{t}.
\]
 We therefore conclude by the definition of $\tilde \angle$ that 
\[
\angle (\gamma_Q,\gamma_{Q'}) = \lim_{t \rightarrow 0} \tilde \angle (Q(t),q_t')  =0, \ \ \ \text{ for }Q' \in \bar A.
\]
This implies that $\bar A \subset \LL^{-1}([\gamma_Q])$ and $\bar A$ is a path from $Q$ to $P$ in $\LL^{-1}([\gamma_Q])$. \hfill $\Box$({\sc Claim 1})\\

 Since $\LL$ is monotone, $\LL^{-1}([\gamma_{Q_i}])$ is connected for $i=1,2$.  Thus
$\partial \B_\epsilon \backslash (\LL^{-1}([\gamma_{Q_1}]) \cup \LL^{-1}([\gamma_{Q_2}])= U \cup U'$ where $U,U'$ are distinct open sets.     \\
\\
{\sc Claim 2.}   $\LL(U)$ and $\LL(U')$ are open subsets in  $\EE_{q_0}$.   \\
\\
{\sc Proof.}
Let $[\gamma_{Q_0}] \in \LL(U)$ with $Q_0 \in U$. We will first show that   there exists $\delta>0$ such that $\angle(\gamma_{Q_0}, \gamma_P) >\delta$ for all $P \in \overline{U'}$.  Indeed, 
on the contrary, assume that there exist $P_i \in \overline{U'}$ such that $\angle(\gamma_{Q_0}, \gamma_{P_i}) < \frac{1}{i}$.  By taking a subsequence if necessary, assume $P_i \rightarrow Q_\infty \in \overline{U'}$. By the continuity of angles,
$\angle (\gamma_{Q_0}, \gamma_{Q_\infty})= \lim_{i \rightarrow \infty} \angle (\gamma_{Q_0}, \gamma_{P_i})=0$ which implies that $Q_\infty \in \LL^{-1}([\gamma_{Q_0}]) \subset U$. Since $U \cap \overline{U'}=\emptyset$, this is a contradiction.  Thus, for  $\delta>0$ as above,   
the geodesic ball $\BB^{\EE_{q_0}}_{\delta}([\gamma_{Q_0}])$ is contained in $\LL(U)$. This proves $\LL(U)$ is open.  Similarly, $\LL(U')$ is open.    \hfill $\Box$({\sc Claim 2})
\\

  Since $\LL(U) \cap \LL(U') = \emptyset$,  $\LL(U) \cup \LL(U') = \EE_{q_0} \backslash \{[\gamma_{Q_1}], [\gamma_{Q_2}]\}$ and $\LL(U)$, $\LL(U')$ are open, we conclude $\EE_{q_0} \backslash \{[\gamma_{Q_1}], [\gamma_{Q_2}]\}$ is disconnected.  
By \cite{moore}, $\EE_{q_0}$ is homeomorphic to a circle.  As a closed and bounded set, $\partial \BB_\epsilon$ is compact.  Thus, $\EE_{q_0}=\LL(\partial \BB_\epsilon)$ is also compact which implies that it is isometric to a finite length closed curve.
We therefore conclude that   $\EE_{q_0}$ isometric to a simple closed curve of finite length. 
\end{proof}

\section{Proof of Lemma~\ref{KSjac}} \label{appC}

\begin{proof}
We start with the following claim (see also \cite{hkst}):\\
\\
{\sc Claim.} {\it There exists a set $\{A_n\}$ of countable disjoint measurable subsets of $\D$ with 
\[
\HH^2\left (\D  \backslash \bigcup_{n=1}^\infty A_n\right)=0
\] 
such that
\[
d(f(z),f(\zeta)) \leq n|z-\zeta|, \ \forall z, \zeta \in A_n.
\]}
{\sc Proof of Claim.}
By Reshetnyak's characterization of finite energy maps  (cf.~\cite{reshetnyak1}) and the equivalence of the class of Reshetnyak finite energy maps and the class of Korevaar-Schoen finite energy maps (cf.~\cite{reshetnyak2}), there exists $\phi \in L^2(\D)$ such that 
\[
|\nabla f_{z_0}|(z) \leq \phi(z) \ \mbox{ a.e.~$z \in \D$ where } f_{z_0}(\cdot) = d(f(\cdot), f(z_0)).
\]
Extend $\phi$ to $\C$ by setting it equal to zero outside of $\D$, and let $M\phi^2$ be the Hardy-Littlewood maximal function of the integrable function $\phi^2$; i.e.
\[
M\phi^2(z)=\sup_{\overline D \ni z} \frac{1}{|D|} \int_D \phi^2 dxdy
\]
where the supremum is taken over all disks $D$ such that $z \in \overline D$  and $|D|$ is the (Euclidean) volume of the disk.  For $z_0, z_1 \in \D$, let $r=|z_0-z_1|$ and $z_t=(1-t)z_0+tz_1$.
Integrating $z \in \D_r(z_{\frac{1}{2}})$ and dividing by $\frac{\pi}{r^2}$, we have 
\begin{eqnarray*}
\frac{1}{\pi r^2} \int_{\D_r(z_{\frac{1}{2}})} d(f(z_0),f(z)) \, dxdy 
& = & 
\frac{1}{\pi r^2} \int_{\D_r(z_{\frac{1}{2}})} |f_{z_0}(z_0)-f_{z_0}(z)| \, dxdy\\
 & \leq & 
\frac{1}{\pi r^2} \int_{\D_r(z_{\frac{1}{2}})}
\left( |z_0-z_1| \int_0^1  |\nabla f_{z_0}| \big( (1-t)z_0+tz\big) \, dt \right) \, dxdy
\\
& \leq & |z_0-z_1| 
\int_0^1 \left(  \frac{1}{\pi r^2}  \int_{\D_{tr}(z_{\frac{t}{2}})}  |\nabla f_{z_0}| \, dxdy\right) dt\\
& \leq  & |z_0-z_1| 
\int_0^1 t^2 \left(  \frac{1}{\pi (tr)^2}  \int_{\D_{tr}(z_{\frac{t}{2}})} \phi \, dxdy\right) dt\\
& \leq  & |z_0-z_1| 
\int_0^1 t^2 \left(  \frac{1}{\pi (tr)^2}  \int_{\D_{tr}(z_{\frac{t}{2}})} \phi^2 \, dxdy\right) dt\\
&\leq &\frac{1}{3} |z_0-z_1|  M\phi^2(z_0).
\end{eqnarray*}
Similarly, we obtain
\[
\frac{1}{\pi r^2} \int_{\D_r(z_{\frac{1}{2}})} d(f(z_1),f(z)) \, dxdy\leq \frac{1}{3}|z_0-z_1| M\phi^2(z_1),
\]
and the triangle inequality implies
\begin{eqnarray*}
d(f(z_0),f(z_1)) & = &  
\frac{1}{\pi r^2} \int_{\D_r(z_{\frac{1}{2}})}d(f(z_0),f(z_1)) \, dxdy
\\& \leq & 
\frac{1}{\pi r^2} \int_{\D_r(z_{\frac{1}{2}})}d(f(z_0),f(w)) \, dxdy
+ \frac{1}{\pi r^2} \int_{\D_r(z_{\frac{1}{2}})}d(f(z_1),f(w)) \, dxdy \\
& \leq &  |z_0-z_1| \left(M\phi^2(z_0)+M\phi^2(z_1)\right).
\end{eqnarray*}
Since $M\phi^2 \in L^1$, we have
\[
\HH^2\left(\D  \backslash \bigcup_{n=1}^\infty A_n\right)=0  \ \text{where} \ 
A_n=\{ z \in \D:  n-1 \leq 2M\phi^2(z) < n\}
\]
and by the above inequality
\[
d(f(z), f(\zeta)) \leq n |z-\zeta|, \ \forall z, \zeta \in A_n.
\]
\hfill $\Box$({\sc Claim}).  \\

Let  $\{A_n\}$ be  as in the Claim.   Fix $n$,  apply Kuratowski isometric embedding of $X$ into a Banach space $l^\infty(X)$ of bounded functions on $X$ with norm $|| \cdot ||$ and then apply the Kirsbraun theorem for Banach spaces to  extend the restriction map $f|_{A_n}:A_n \rightarrow X \subset l^{\infty}(X)$ to a Lipschitz map $\hat f:\C \rightarrow l^{\infty}(X)$.  By \cite[Theorem 2]{kirchheim}, 
MD$(\hat f, z_0)$ exists for a.e.~$z_0 \in \C$ and
\[
||\hat f(z) -\hat f(\zeta)|| - \text{MD}(\hat f,z_0)(z-\zeta) = o(|z-z_0|+|\zeta-z_0|).
\]
In particular, for $z_0, z \in A_n$, 
\[
d(f(z), f(z_0)) -\text{MD}(\hat f,z_0)(z-z_0) =o(|z-z_0|).
\]
Combined with the fact that $f=\hat f$ in $A_n$, this implies that   if $z_0 \in A_n$ is a  density 1 point of $A_n$, then $\text{MD}_{ap}(f,z_0)$ exists and  
\[
 \text{MD}_{ap}(f,z_0) = \text{MD}(\hat f,z_0).
\] 
By \cite[Theorem 1.8.1 and Lemma~1.9.5]{korevaar-schoen1},
for a.e.~$z_0 \in \D$,
\[
\left|f_*(\omega)\right|^2 = \lim_{r \rightarrow 0} \frac{d^2(f(z_0), f(z_0+r\omega))}{r^2}, \ \ \text{a.e.}~\omega \in \Sp^1.
\]
Thus, for a.e.~$z_0 \in A_n$
\[
 \text{MD}_{ap}(f,z_0)(\omega) =\left|f_*(\omega)\right|(z_0), \ \ \text{a.e.}~\omega \in \Sp^1.
 \]
The assertion now follows from the definition of $\JJ_f$ and the fact that $\HH^2(\D  \backslash \bigcup_{n=1}^\infty A_n)=0$.
\end{proof}

\end{document}